\newtheorem{defn0}{Definition}[section]
\newtheorem{prop0}[defn0]{Proposition}
\newtheorem{thm0}[defn0]{Theorem}
\newtheorem{lemma0}[defn0]{Lemma}
\newtheorem{claim0}[defn0]{Claim}
\newtheorem{corollary0}[defn0]{Corollary}
\newtheorem{example0}[defn0]{Example}
\newtheorem{remark0}[defn0]{Remark}
\newtheorem{assumption0}[defn0]{Assumption}
\newtheorem{conjecture0}[defn0]{Conjecture}
\newtheorem{notation0}[defn0]{Notation}
\newtheorem{question0}[defn0]{Question}
\newenvironment{definition}{\begin{defn0}\rm}{\end{defn0}}
\newenvironment{proposition}{\begin{prop0}}{\end{prop0}}
\newenvironment{theorem}{\begin{thm0}}{\end{thm0}}
\newenvironment{lemma}{\begin{lemma0}}{\end{lemma0}}
\newenvironment{corollary}{\begin{corollary0}}{\end{corollary0}}
\newenvironment{remark}{\begin{remark0}\rm}{\end{remark0}}
\newcommand{\Fr}{{\mathrm {Fr}}}
\newcommand{\disc}{{\mathrm {disc }}}
\newcommand{\E}{\mathcal{E}}
\newcommand{\Pic}{\mathrm{Pic}}
\newcommand{\Trace}{{\mathrm{Tr}}}
\newcommand{\M}{\mathrm{M}}
\newcommand{\Br}{\mathrm{Br}}
\newcommand{\Frob}{\mathrm{F}}
\newcommand{\Vi}{\mathrm{V}}
\newcommand{\End}{{\mathrm{End}}}
\newcommand{\Ann}{{\mathrm {Ann}}}
\newcommand{\Z}{{\mathbb Z}}
\newcommand{\Q}{{\mathbb Q}}
\newcommand{\C}{{\mathbb C}}
\newcommand{\R}{{\mathbb R}}
\newcommand{\F}{{\mathbb F}}
\newcommand{\PP}{{\mathbb P}}
\newcommand{\cS}{{\mathcal S}}
\newcommand{\cX}{{\mathcal X}}
\newcommand{\cM}{{\mathcal M}}
\newcommand{\dN}{{\mathcal N}}
\newcommand{\cE}{{\mathcal E}}
\newcommand{\cO}{{\mathcal O}}
\newcommand{\dP}{{\mathfrak P}}
\newcommand{\dQ}{{\mathfrak Q}}
\newcommand{\id}{{\mathrm {Id}}}
\newcommand{\Spec}{{\mathrm {Spec}}}
\newcommand{\ra}{{\rightarrow}}
\newcommand{\lra}{\longrightarrow}
\newcommand{\hra}{\hookrightarrow}
\newcommand{\Hom}{{\mathrm {Hom}}}
\newcommand{\car}{{\mathrm{char}}}
\newcommand{\CM}{\operatorname{CM}}
\title{Ribet bimodules and \\ the specialization of Heegner points}
\author{Santiago Molina\\
    \small Matemàtica aplicada IV\\
  \small Universitat Politècnica de Catalunya
}
\begin{document}
%%%%%%%%%%%%%%%%%%%%%%%%%%%%%%%%%%%%%%%%%%%%%
\maketitle
%%%%%%%%%%%%%%%%%%%%%%%%%%%%%%%%%%%%%%%%%%%%%
%\vskip 1cm

\section{Introduction}

Let $B$ be a quaternion algebra over the field $\Q $ of rational numbers, and let $D\geq 1$ denote its reduced discriminant.
Let $N\geq 1$ be a positive integer, coprime to $D$, let $\E(D,N)$ stand for the set of oriented Eichler orders of level $N$ in $B$  (cf.\,\S \ref{QOOE} for precise definitions) and finally let $\Pic(D,N)$ denote the
set of isomorphism classes of such orders.

For any given $\cO \in \E(D,N)$, it is well known that $\Pic(D,N)$ is in bijective correspondence with the set of
classes of projective left $\cO $-ideals in $B$ up to principal ideals.%, which in turn may be described as the double coset $\hat\cO^{\times}\setminus \hat B^{\times}/ B^{\times}$.

If $B$ is indefinite, i.\,e.,\,$D$ is the (square-free) product of an {\em even} number of primes, then $\Pic(D,N)$ is trivial for any $N$; if $B$ is definite, $\Pic(D,N)$ is a finite set whose cardinality $h(D, N)$ is often referred to as the class number of $\cO$.

Let $K/\Q$ be an imaginary  quadratic number field and let $R$ be an order of $K$ of conductor $c$. For any $\cO \in \E(D,N)$ let $\CM_{\cO}(R)$ denote the set of optimal embeddings $\varphi: R\hookrightarrow \cO$, up to conjugation by $\cO^{\times}$. Define $$\CM_{D,N}(R) = \sqcup_{\cO} \CM_{\cO}(R),$$ where
$\cO\in\E(D,N)$ runs over a set of representatives of $\Pic(D,N)$. Write
$$
\pi: \CM_{D,N}(R) \lra \Pic(D,N)
$$
for the natural forgetful projection which maps a conjugacy class of optimal embeddings $\varphi: R\hra \cO$ to the isomorphism class of its target $\cO$.

Let $X_0(D,N)/\Q $ denote Shimura's canonical model of the coarse moduli space of abelian surfaces with multiplication by an Eichler order $\cO$ of level $N$ in $B$. As it is well known, %(and quickly reviewed in \S \ref{ModCM}),
associated to each $R$ as above there is a (possibly empty) set of algebraic points $\CM(R)\subset X_0(D,N)(K^{ab})$, which is in natural one-to-one correspondence with $\CM_{\cO}(R)$. These points are called {\em special}, {\em CM} or {\em Heegner} points in the literature. Below we shall recall natural actions of the class group $\Pic(R)$ and of the Atkin-Lehner group $W(D,N)$ on both sets $\CM_{\cO}(R)$ and $\CM(R)$ which are intertwined by this correspondence. In the case of $\Pic(R)$ this is the statement of Shimura's reciprocity law.

Let now $p$ be a prime, $p\nmid c$. It follows from the work of various people, including Deligne-Rapoport, Buzzard, Morita, Cerednik and Drinfeld, that $X_0(D,N)$ has good reduction at $p$ if and only if $p\nmid D N$. There exists a proper integral model $\mathcal X_0(D,N)$ of $X_0(D,N)$ over $\mathrm{Spec}(\Z)$, smooth over $\Z[\frac{1}{DN}]$, which suitably extends the moduli interpretation to arbitrary base schemes (cf.\,\cite{BC}, \cite{Mo}). When $p$ is fixed in the context and no confusion can arise, we shall write $\tilde{X}_0(D,N)$ for its special fiber at $p$.
It is known %(though perhaps not so well-documented in the literature)
that each of the sets $S$ of
\begin{itemize}
\item Singular points of $\tilde{X}_0(D,N)$ for $p\mid \mid DN$,

\item Irreducible components of $\tilde{X}_0(D,N)$ for $p\mid \mid DN$,

\item Supersingular points of $\tilde{X}_0(D,N)$ for $p\nmid DN$
\end{itemize}
are in one-to-one correspondence with either one or two copies of $\Pic(d,n)$ for certain $d$ and $n$ (cf.\,\S \ref{CD}, \ref{ssgood}, \ref{DR} for precise statements).

In each of these three cases, we show that under appropriate behavior of $p$ in $R\subset K$ (that we make explicit in \S \ref{CD}, \ref{ssgood}, \ref{DR}, respectively -see also the table below), Heegner points $P\in \CM(R)$ specialize to elements of $S$ (with the obvious meaning in each case).

This yields a map which, composed with the previously mentioned identifications $\CM_{D,N}(R) = \CM(R)\subset X_0(D,N)(K^{ab})$ and $S=\bigsqcup_{i=1}^{t} \Pic(d,n)$ ($t=1$ or $2$), takes the form

\begin{equation}\label{map}
\CM_{D,N}(R) \lra \bigsqcup_{i=1}^{t} \Pic(d,n).
\end{equation}

Our first observation is that, although the construction of this map is of geometric nature, both the source and the target are pure algebraic objects. Hence, one could ask whether there is a pure algebraic description of the arrow itself.

One of the aims of this note is to exploit Ribet's theory of bimodules in order to show that this is indeed the case, and that in fact  \eqref{map} can be refined as follows: there exists a map $\Phi: \CM_{D,N}(R)\lra \bigsqcup_{i=1}^{t} \CM_{d,n}(R)$ which is equivariant for the actions of $\Pic(R)$ and of the Atkin-Lehner groups and makes the diagram

\begin{equation}\label{map2}
\xymatrix{
\CM_{D,N}(R)\ar[dr] \ar[r]^\Phi & \bigsqcup_{i=1}^{t}\CM_{d,n}(R)\ar[d]^\pi \\
 &  \bigsqcup_{i=1}^{t} \Pic(d,n)
}
\end{equation}
commutative (see \eqref{Phi1}, \eqref{Phi2}, \eqref{Phi3}, \eqref{Phi4} and \eqref{Phi5}). Moreover, we show that when $N$ is square free, the maps $\Phi$ are bijections.

As we explain in this article, the most natural construction of the map $\Phi$ is again geometrical, but can also be recovered in pure algebraic terms.

The main bulk of the article is devoted to the case $p\mid D$; the other cases are comparatively much simpler and probably already known, at least partially.
For the convenience of the reader, we summarize the situation for $p\mid \mid DN$ in the following table.

\vspace{3mm}
\noindent
\begin{tabular}{|c|c|c|c|c|}
\hline
$P\in \CM(R)$ &  & Condition on $p$  &  $S$=Irreducible components & $\Phi$  \\
\hline
$\widetilde P$ & $p\mid D$ & $p$ \small{inert in} $K$ & $\small{\bigsqcup_{i=1}^2\Pic(\frac{D}{p},N)}$ & \small{$\CM_{D,N}(R)\rightarrow\sqcup_{i=1}^2\CM_{\frac{D}{p},N}(R)$} \\
\cline{2-5}
\small{regular} & $p\parallel N$ & $p$ \small{splits in} $K$  & $\small{\bigsqcup_{i=1}^2\Pic(D,\frac{N}{p})}$ & \small{$\CM_{D,N}(R)\rightarrow\sqcup_{i=1}^2\CM_{D,\frac{N}{p}}(R)$}\\
\hline
& & & & \\
\hline
&  & Condition on $p$ &  $S$=Singular Points & $\Phi $  \\
\cline{2-5}
$\widetilde P$ & $p\mid D$ & $p$ \small{ramifies in} $K$ & $\Pic(\frac{D}{p},Np)$ & $\CM_{D,N}(R)\rightarrow\CM_{\frac{D}{p},Np}(R)$ \\
\cline{2-5}
\small{singular} & $p\parallel N$ & $p$ \small{ramifies in} $K$  & $\Pic(Dp,\frac{N}{p})$ & $\CM_{D,N}(R)\rightarrow\CM_{Dp,\frac{N}{p}}(R)$\\
\hline
\end{tabular}
\vspace{3mm}

Let us summarize here a weak, simplified version of some of the main results of this note, Theorems \ref{redCMsing}, \ref{CMsingthm}, \ref{CMsmooththm} and \ref{DRsing}.  We keep the notation as above.

\begin{theorem}
\begin{enumerate}
\item[(1)] Let $p\mid D$.
\begin{itemize}
\item [(i)] A Heegner point $P \in \CM(R)$ of $X_0(D,N)$ reduces to a singular point of $\tilde{X}_0(D,N)$ if and only if $p$ ramifies in $K$.

\item [(ii)] Assume $p\mid \disc(K)$ and $N$ is square-free. Then there is a one-to-one correspondence $\Phi: \CM(R) \stackrel{\simeq}{\lra} \CM_{D/p, Np}(R)$ which is equivariant for the action of $\Pic(R)$ and $W(D,N)$.

\item [(iii)] Assume $p\nmid \disc(K)$ and $N$ is square-free. Then there is a one-to-one correspondence $\Phi: \CM(R) \stackrel{\simeq}{\lra} \CM_{D/p, N}(R) \sqcup \CM_{D/p, N}(R)$ which is equivariant for the action of $\Pic(R)$ and $W(D,N)$.
\end{itemize}

\item[(2)] Let $p\mid (N,\disc(K))$ and $N$ be square-free. Then there is a one-to-one correspondence $\Phi: \CM(R) \stackrel{\simeq}{\lra} \CM_{Dp, N/p}(R)$ which is equivariant for the action of $\Pic(R)$ and $W(D,N)$.
\end{enumerate}
\end{theorem}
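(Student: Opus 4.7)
The plan is to realise the specialisation map geometrically via the known uniformisations of $\tilde X_0(D,N)$ (Cerednik--Drinfeld for $p\mid D$, Deligne--Rapoport for $p\mid N$) and then reinterpret it algebraically using Ribet's theory of bimodules, which converts local geometric data about a QM abelian surface into the datum of a pair of quadratic orders linked by a bimodule. A Heegner point $P \in \CM(R)$ is represented by a conjugacy class of optimal embeddings $\varphi : R \hookrightarrow \cO$; I will read the reduction type and the image under $\Phi$ off the local behaviour of $\varphi \otimes \Z_p$.

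For part (1)(i), since $p\mid D$ the algebra $B \otimes \Q_p$ is a division algebra, so $K_p = K \otimes \Q_p$ embeds in it exactly when $p$ does not split in $K$; in particular $\CM(R)$ is empty when $p$ splits. The Cerednik--Drinfeld theorem identifies the dual graph of $\tilde X_0(D,N)$ with the quotient of the Bruhat--Tits tree of $\PGL_2(\Q_p)$ by an arithmetic subgroup coming from the ``switched'' definite quaternion algebra $B'$ of discriminant $D/p$. I would observe that the centraliser of $\varphi \otimes \Z_p$ acts on this tree with fixed locus a single vertex when $K_p/\Q_p$ is unramified ($p$ inert) and a single edge when $K_p/\Q_p$ is ramified ($p \mid \disc(K)$); hence $\widetilde P$ lies in the smooth locus in the first case and is a singular point in the second.

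For parts (1)(ii) and (1)(iii) I would refine this local picture. In the ramified case the fixed edge corresponds to an Eichler order of level $p$ in $B' \otimes \Q_p$, so glueing it with the components of $\cO$ away from $p$ produces an Eichler order of level $Np$ in $B'$ (discriminant $D/p$) together with an induced optimal embedding of $R$; this is the map $\Phi$. In the unramified case the fixed vertex gives a maximal order in $B' \otimes \Q_p$, yielding an Eichler order of level $N$ in $B'$; the two copies of $\CM_{D/p,N}(R)$ will reflect the two possible orientations at $p$ (equivalently the two primes of $K$ above $p$, or the bipartition of the tree). For part (2), $p \parallel N$ is treated via Deligne--Rapoport: $\tilde X_0(D,N)$ is two copies of $X_0(D,N/p)$ glued at supersingular points, and a ramified Heegner point reduces to an intersection whose endomorphism datum is an Eichler order of level $N/p$ in the quaternion algebra enlarging ramification at $p$, i.e.\ of discriminant $Dp$. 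In each case $\Phi$ admits a purely algebraic definition in terms of Ribet bimodules, essentially by replacing the local component of $\cO$ at $p$ by its bimodule partner while keeping the embedding of $R$ global.

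Equivariance for $\Pic(R)$ and $W(D,N)$ will follow from the functoriality of all the constructions: both actions are defined on ideals and on local Eichler orders, and they commute with the Ribet-bimodule substitution at $p$. For bijectivity in the square-free case I would construct the inverse by reversing the bimodule operation, appealing to the local uniqueness of the relevant Eichler orders (this is where square-freeness of $N$ enters, so that the local orders at $p$ are maximal or Eichler of level $p$ only). The step I expect to be the main obstacle is matching the algebraically defined $\Phi$ with the geometric specialisation exactly: this demands a moduli-theoretic dictionary between QM abelian surfaces over the special fibre and bimodules over local quaternion orders, in the spirit of Ribet's original work on component groups, together with a careful check that the component and singular-point parametrisations tabulated in the introduction are induced by the correct orientation conventions.
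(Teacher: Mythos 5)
Your proposal takes a genuinely different route to the reduction-type criterion in part (1)(i): you propose to read it from the \v{C}erednik--Drinfeld $p$-adic uniformization via the Bruhat--Tits tree, where the Heegner point corresponds to the fixed locus of the embedded torus $K_p^\times$ -- a vertex when $K_p/\Q_p$ is unramified, an edge midpoint when ramified. The paper instead stays on the moduli and bimodule side throughout: it computes $\cM_{\widetilde P}\simeq\cO\otimes_R\cS$ with $\phi_P$ given by right multiplication (Theorem \ref{CMbimod}), proves in Proposition \ref{Howbimis} that ramification of $p$ forces this bimodule to be admissible of type $(1,1)$ at $p$ so that $\widetilde P$ is singular, and gets the converse by observing that singularity forces $\phi_P$ to land in $\CM_{D/p,Np}(R)$, which is empty when $p$ is inert; in the unramified case it passes to the ``component'' bimodule $\cM^{\widetilde P}$, the quotient by the unique $\cO$-stable $\alpha_p$, of type $(2,0)$ or $(0,2)$. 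Your tree-theoretic route is viable -- the paper itself remarks that the forward implication follows from rigid-analytic methods -- but the bimodule route delivers precisely what the paper needs: a purely algebraic, hence computable, description of the refined map $\Phi$ into $\CM_{D/p,Np}(R)$ (not merely $\Pic(D/p,Np)$), and a clean bijectivity argument because $\Pic(R)\times W_{D,N}(R)$ acts freely and transitively on both source and target, so equivariance alone forces the equivariant map to be a bijection. By contrast, your plan to ``construct the inverse by reversing the bimodule operation'' is a CM-lifting problem that you do not actually resolve and that the paper avoids entirely. One further small correction: the role of square-freeness of $N$ is to guarantee the free transitive $\Pic(R)\times W_{D,N}(R)$-action on the CM sets (via \eqref{h} and the properties of the Atkin--Lehner involutions), not, as you suggest, to control the local order at $p$; for $p\mid D$ one has $p\nmid N$ automatically and the $p$-local order is always maximal.
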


The above statements are indeed a weak version of our results, specially because we limited ourselves to claim the mere existence of the correspondences $\Phi $, whereas in \S \ref{CD} and \S \ref{DR} we actually provide explicit computable description of them in terms of bimodules. Combined with the results of P. Michel \cite[Theorem 10]{Mich} and Shimura's reciprocity law, we also obtain the following equidistribution result, which the reader may like to view as complementary to those of Cornut-Vatsal and Jetchev-Kane.

\begin{corollary}

\begin{itemize}
\item [(i)] Assume $p\parallel DN$ and $p\mid \disc(K)$. Let $\tilde X_0(D,N)_{\rm{sing}}=\{s_1,s_2\cdots,s_h\}$ be the set of singular points of $\tilde X_0(D,N)$ and let $\Pi:\CM(R)\ra\tilde X_0(D,N)_{\rm{sing}}$ denote the specialization map. Then, as $\disc(K)\ra\infty$, the sets $\CM(R)$ are equidistributed in $\tilde X_0(D,N)_{\rm{sing}}$ relatively to the measure given by
    \[\mu(s_i)=\omega_i^{-1}/(\sum_{j=1}^h\omega_j),\]
    where $\omega_i$ stands for the thickness of $s_i$. More precisely, there exists an absolute constant $\eta>0$ such that
    \[\frac{\#\{P\in\CM(R),\;\;\Pi(P)=s_i\}}{\#\CM(R)}=\mu(s_i)+O(\disc(K)^{-\eta}).\]
\item [(ii)] Assume $p\mid D$ and is inert in $K$. Let $\tilde X_0(D,N)_{cc}=\{c_1,c_2\cdots,c_t\}$ be the set of connected components of $\tilde X_0(D,N)$ and let $\Pi_c:\CM(R)\ra\tilde X_0(D,N)_{cc}$ denote the map which assigns to a point $P\in CM(R)$ the connected component where its specialization lies. Then, as $\disc(K)\ra\infty$, the sets $\CM(R)$ are equidistributed in $\tilde X_0(D,N)_{cc}$ relatively to the measure given by
    \[\mu(s_i)=\omega_i^{-1}/(\sum_{j=1}^h\omega_j),\]
    where $\omega_i$ stands for the cardinal of the group of unites of the order in $\Pic(D/p,N)$ attached to $c_i$. More precisely, there exists an absolute constant $\eta>0$ such that
    \[\frac{\#\{P\in\CM(R),\;\;\Pi_c(P)=c_i\}}{\#\CM(R)}=\mu(s_i)+O(\disc(K)^{-\eta}).\]
\end{itemize}
\end{corollary}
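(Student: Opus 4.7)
The plan is to reduce both parts to P.\ Michel's equidistribution theorem \cite[Theorem 10]{Mich} by transporting the configuration of CM points along the bijection $\Phi$ produced by the Theorem above. The key structural input is the commutativity of diagram \eqref{map2} together with the $\Pic(R)$-equivariance of $\Phi$: since Shimura's reciprocity law identifies the $\Pic(R)$-action on $\CM(R)$ with the arithmetic Galois action on Heegner points, the distribution of $\CM(R)$ over the singular points (respectively, over the connected components) of $\tilde X_0(D,N)$ is controlled by the distribution of $\Pic(R)$-orbits inside $\CM_{d,n}(R)$ pushed forward along $\pi$ to $\Pic(d,n)$.

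For part (i), the bijection $\Phi$ lands in $\CM_{D/p,Np}(R)$ when $p\mid D$ (case (1)(ii) of the Theorem) and in $\CM_{Dp,N/p}(R)$ when $p\parallel N$ (case (2)), and in either case $\pi\circ\Phi$ matches the specialization $\Pi$ under the identification $\tilde X_0(D,N)_{\rm sing}\cong\Pic(d,n)$ of \S\ref{CD} and \S\ref{DR}. Michel's theorem applies verbatim on the right-hand side: the Galois orbit of a CM point of conductor coprime to $dn$ equidistributes in $\Pic(d,n)$ with respect to the probability measure proportional to $|\cO^\times|^{-1}$, with a power-saving error $O(\disc(K)^{-\eta})$. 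Transporting this along $\Phi$ yields the asymptotic for $\Pi$. Part (ii) proceeds identically, now with $\Phi:\CM(R)\to\bigsqcup_{i=1}^2\CM_{D/p,N}(R)$ indexing the connected components, by applying \cite[Theorem 10]{Mich} to each summand.

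The main obstacle, and the only step that is not purely formal, is matching the geometric weight $\omega_i$ with the arithmetic weight $|\cO_i^\times|^{-1}$ appearing in Michel's measure. In the connected-component statement (ii) the identification is built into the very definition of $\omega_i$. In the singular-point case (i) it requires inspecting the local intersection at $s_i$ in the Cerednik--Drinfeld (respectively Deligne--Rapoport) model and confirming that the thickness is governed, up to a factor common to all $s_i$, by the order of the unit group of the Eichler order attached to $s_i$ through the dictionary of \S\ref{CD} and \S\ref{DR}. Once this compatibility is checked the corollary follows immediately from \cite[Theorem 10]{Mich}, with the uniform error term $O(\disc(K)^{-\eta})$ inherited from Michel's bound.
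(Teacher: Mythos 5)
Your argument is exactly the one the paper intends: the corollary follows by transporting Michel's equidistribution theorem along the $\Pic(R)$-equivariant maps $\Phi$ of Theorems \ref{CMsingthm}, \ref{CMsmooththm} and \ref{DRsing}, with Shimura reciprocity identifying the $\Pic(R)$-action on $\CM(R)$ with the Galois action. Your flagging of the need to match the geometric thickness $\omega_i$ with $|\cO_i^\times|$ in case (i) is also the right place to be careful; the compatibility holds because the \v{C}erednik--Drinfeld and Deligne--Rapoport local descriptions identify the thickness of the double point attached to the oriented Eichler order $\cO_i$ with $|\cO_i^\times|/2$, a common factor of $2$ that cancels after normalization.
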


The original motivation of this research was the case of specialization of Heegner points $P\in \CM(R)$ on Cerednik-Drinfeld's special fiber $\tilde{X}_0(D,N)$ at a prime $p\mid D$, particularly when $p$ ramifies in $R$. There are several reasons which make this scenario specially interesting (beyond the fact that so far it had not been studied at all, as most articles on the subject exclude systematically this case):

\begin{enumerate}

\item[(1)] Let $f$ be a newform of weight $2$ and level $L\geq 1$ such that there exists a prime $p\mid \mid L$ which ramifies in $K$. Assume $L$ admits a factorization $L=DN$ into coprime integers $D$ and $N$ where: $p\mid D$ and $D/p$ is the square-free product of an odd number of primes, none of which splits in $K$; all prime factors of $N$ split in $K$. This is a situation in which little is known about the Birch and Swinnerton-Dyer conjecture for the abelian variety $A_f$ attached to $f$ by Shimura, when looked over $K$ and over its class fields.

It follows that for every order $R$ in $K$ of conductor $c\geq 1$ such that $(c,L)=1$, $\CM(R)\subset X_0(D,N)(K^{ab})$ is a {\em nonempty} subset of Heegner points.  Points $P\in \CM(R)$ specialize to {\em singular} points on the special fiber $\tilde{X}_0(D,N)$ at $p$; degree-zero linear combinations of them yield points on the N\'eron model $\mathcal J$ of the Jacobian of $X_0(D,N)$ over $\Z_p$, which can be projected to the group $\Phi_p$ of connected components of $\tilde{\mathcal J}$. It is expected that the results of this paper will be helpful in the study of the position of the image of such Heegner divisors in $\Phi_p$ in analogy with the work of Edixhoven in \cite{Edix} (though the setting here is quite different from his).

Moreover, if one further assumes that the sign of the functional equation of $f$ is $+1$ (and this only depends on its behavior at the prime factors of $D$), it is expected that this can be related to the special values $L(f/K,\chi,1)$ of the L-function of $f$ over $K$ twisted by finite characters unramified at $p$, as an avatar of the Gross-Zagier formula in the spirit of \cite{BerDar}. If true, a system of Kolyvagin classes could be constructed out of the above mentioned Heegner divisors and a word could be said on the arithmetic of $A_f$. The author hopes to pursue these results in the work in progress \cite{MoRo}.

\item[(2)] On the computational side, there is an old, basic question which seems to remain quite unapproachable: can one write down explicit equations of curves $X_0(D,N)$ over $\Q $ {\em when $D>1$}? As it is well-known, classical elliptic modular curves $X_0(N):=X_0(1,N)$ can be tackled thanks to the presence of cusps, a feature which is only available in the case $D=1$. Ihara \cite{Ihara} was probably one of the first to express an interest on this problem, and already found an equation for the genus $0$ curve $X_0(6,1)$, while challenged to find others. Since then, several authors have contributed to this question (Kurihara, Jordan, Elkies, Clark-Voight for genus $0$ or/and $1$, Gonzalez-Rotger for genus $1$ and $2$). The methods of the latter are heavily based on Cerednik-Drinfeld's theory for $\mathcal{X}_0(D,N)\times \Z_p$ for $p\mid D$ and the arithmetic properties of fixed points by Atkin-Lehner involutions on $X_0(D,N)$. It turns out that these fixed points are usually Heegner points associated to fields $K$ in which at least one (sometimes all!) prime $p\mid D$ ramifies: one (among others) of the reasons why the methods of \cite{GoRo1}, \cite{GoRo2} do not easily extend to curves of higher genus is the little understanding one has of the specialization of these points on the special fiber $\tilde{X}_0(D,N)$ at $p$. It is hoped that this article can partially cover this gap and be used to find explicit models of many other Shimura curves: details for hyperelliptic curves may appear in \cite{Mol}. For this application, our description of the geometrically-constructed maps $\Phi$ in pure algebraic terms by means of Ribet's bimodules turns out to be crucial, as this allows to translate it into an algorithm.

\end{enumerate}

\subsubsection*{Notation.}
Throughout, for any module $M$ over $\Z$  and for any prime $p$ we shall write $M_p=M\otimes_{\Z} \Z_p$. Similarly, for any homomorphism $\chi : M \ra N$ of modules over $\Z $, we shall write $\chi_p: M_p \ra N_p$ for the natural homomorphism obtained by extension of scalars. We shall also write $\hat\Z$ to denote the profinite completion of $\Z$, and $\hat M=M \otimes_\Z \hat\Z$.

For any $\Z $-algebra $\mathcal D$, write $\mathcal D^0=\mathcal D\otimes _{\Z} \Q $ and say that an embedding $\varphi: \mathcal D_1\, \hookrightarrow \,\mathcal D_2$ of $\Z$-algebras is {\em optimal} if $\varphi(\mathcal D^0_1)\cap\mathcal D_2=\varphi(\mathcal D_1)$ in $\mathcal D^0_2$.

\subsubsection*{Acknowledgements.}
The author would like to thank Massimo Bertolini, Matteo Longo and specially Josep González and Víctor Rotger for their comments and discussions
 throughout the development of this paper. The author also thanks Xevi Guitart and Francesc Fité for carefully reading a draft of the paper and providing many helpful
comments.

\section{Preliminaries}\label{Adelesembdd}

\subsection{Quaternion orders and  optimal embeddings}\label{QOOE}

Let $B$ be a quaternion algebra over $\Q$, of reduced discriminant $D$.
An order $\cO $ in $B$ is Eichler if it is the intersection of two maximal orders. Its index $N$ in any of the two maximal orders is called its level.

An orientation on $\cO $  is a collection of choices, one for each prime $p\mid D N$: namely, a choice of a homomorphism $o_p: \cO\rightarrow \F_{p^2}$ for each prime $p\mid D$, and a choice of a local maximal order
$\tilde{\cO}_p$ containing  $\cO_p$ for each $p\mid N$. One says that two oriented Eichler orders $\cO, \cO'$ are isomorphic whenever there exists an automorphism $\chi $ of $B$ with $\chi ( \cO ) = \cO'$ such that $o_p = o'_p \circ \chi $ for $p\mid D$ and $\chi_p(\tilde{\cO}_p) = \tilde{\cO'}_p$ for $p\mid N$. Fix an oriented Eichler order $\cO$ in $\cE(D,N)$.

\begin{definition}\label{*}
For a projective left $\cO$-ideal $I$ in $B$, let $I\ast \cO $ be the right order $\cO'$ of $I$ equipped with the following local orientations at $p\mid D N$. Since $I$ is locally principal, we may write $I_p = \cO_p \alpha_p$ for some $\alpha_p\in B_p$, so that $\cO'_p = \alpha_p^{-1}\cO_p \alpha_p$. For $p\mid D$, define $o'_p(\alpha_p^{-1} x\alpha_p)= o_p(x)$. For $p\mid N$, define $\tilde{\cO}'_p= \alpha_p^{-1}\tilde{\cO }\alpha_p$.
\end{definition}

As in the introduction, let $R$ be an order in an imaginary quadratic field $K$. Two local embeddings $\phi_p,\varphi_p:R_p\hookrightarrow \cO_p$ are said to be equivalent, denoted $\varphi_p\sim_p\phi_p$, if there exists  $\lambda\in\cO_p^{\times}$ such that
$\phi_p=\lambda\varphi_p\lambda^{-1}$. Let us denote by $m_p$ the number of equivalence classes of such local embeddings.
Note that, in case $p\mid D$ or $p\parallel N$, we have $m_p\in\{0,1,2\}$ by \cite[Theorem 3.1, Theorem 3.2]{Vig}.
We say that two global embeddings  $\phi,\varphi:R\hookrightarrow \cO$ are locally
equivalent  if $\varphi_p\sim_p\phi_p$ for all primes  $ p$.

By \cite[Theorem 5.11]{Vig},
there is a faithful action of $\Pic(R)$ on $\CM_{D,N}(R)$, denoted $[J]\ast\varphi$ for $[J]\in\Pic(R)$ and $\varphi\in \CM_{D,N}(R)$. It can be explicitly defined as follows: if $\varphi: R\hookrightarrow \cO$, let $[J]\ast\cO=\cO\varphi(J)\ast\cO$ and let $[J]\ast\varphi : R \hookrightarrow [J]\ast\cO$ be the natural optimal embedding
\[
R\stackrel{\varphi}{\hookrightarrow}\{x\in B,\;:\;\cO\varphi(J)x\subseteq\cO\varphi(J)\}.
\]

The action of $\Pic(R)$ preserves local equivalence, and in fact
\begin{equation}\label{h}
\# \CM_{D,N}(R)=h(R)\prod_{p\mid DN} m_p,
\end{equation} where $h(R)=\#\Pic(R)$ is the class number of $R$.

For each  $p^n\parallel DN$, there is also an Atkin-Lehner involution $w_{p^n}$ acting on $\CM_{D, N}(R)$, which can be described as follows.
Let $\dP_\cO$ denote the single two-sided ideal of $\cO$ of norm $p^n$. Notice that $\dP_\cO\ast\cO$ equals $\cO $ as orders in $B$, but they are endowed with possibly different local orientations. Given
$\varphi\in \CM_{\cO }(R) \subset \CM_{D,N}(R)$, $w_{p^n}$ maps $\varphi$ to the
optimal embedding $w_{p^n}(\varphi):R\hookrightarrow \dP_\cO\ast\cO$, where $w_{p^n}(\varphi)$ is simply $\varphi$ as ring homomorphism.

In the particular case $p\parallel DN$ and $m_p=2$, the involution  $w_{p}$ switches the two
local equivalence classes at $p$.

For any $m\parallel DN$ we denote by $w_m$ the composition $w_m=\prod_{p^n\parallel m}w_{p^n}$. The set
$W(D,N)=\{w_m\;:\;m\parallel DN\}$ is  an abelian $2$-group with the operation $w_m w_n=w_{nm/(m,n)^2}$, called the Atkin-Lehner group.

Attached to the order $R$, we set
\[
D(R)=\prod_{p\mid D,m_p=2}p\,,\quad N(R)=\prod_{p^n\parallel N,m_p=2}p,
\]
and
\[
W_{D,N}(R)=\{w_m\in
W(D,N)\;:\;m\parallel D(R)N(R)\}.
\]

Suppose that $N$ is square free. Since $\Pic(R)$ acts faithfully on $\CM_{D,N}(R)$ by preserving local equivalences, and Atkin-Lehner
involutions $w_{p}\in  W(R)$ switch local equivalence classes at $p$, it follows from \eqref{h} that the group
$W_{D,N}(R)\times\Pic(R)$ acts freely and transitively on the set $\CM_{D,N}(R)$.

%In order to extend the concept of optimal embedding to general algebras, for any $\Q$-algebra $A$, and any order $\mathfrak{R}$ in $A$, we say that an embedding $\psi:K\hookrightarrow A$ is optimal, with respect to $R$ and $%\mathfrak{R}$, if and only if $\psi(K)\cap\mathfrak{R}=\psi(R)$.

\subsection{Shimura curves}\label{ModCM}

Assume, only in this section, that $B$ is indefinite. By an abelian surface with quaternionic multiplication (QM) by $\cO$
over a field $K$, we mean a pair $(A, i)$ where
\begin{itemize}
\item [i)] $A/K$ is an abelian surface.
\item [ii)]  $i:\cO\hookrightarrow\End(A)$ is an optimal embedding.
\end{itemize}

\begin{remark}
The optimality condition $i(B)\cap\End(A)= i(\cO)$ is always satisfied when $\cO$ is maximal.
\end{remark}

For such a pair, we denote by $\End (A,i)$ the ring of endomorphisms which commute with
$i$, i.e., $\End(A,i)=\{\phi\in\End(A):\phi \circ i(\alpha)=i(\alpha)\circ\phi\mbox{  for all }\alpha\in\cO\}$.

Let us  denote by $X_0(D,N)/\Z $ Morita's integral model (cf.\,\cite{BC}, \cite{Mo}) of the Shimura curve associated to $\cO$. As Riemann surfaces,  $X_0(D,N)_{\C} = \Gamma_0(D, N)\backslash \mathcal H$ if $D>1$; $X_0(D,N)_{\C} = \Gamma_0(D, N)\backslash (\mathcal H \cup \PP^1(\Q))$ if $D=1$. Fixing an isomorphism $B\otimes \R \simeq \M_2(\R )$, the moduli interpretation of $X_0(D,N)$ yields a one-to-one correspondence
\[
\begin{matrix}
\Gamma_0(D, N)\backslash \mathcal H & \stackrel{1:1}{\longleftrightarrow }& \left\{ \begin{array}{l}
\mbox{ Abelian surfaces } (A, i)/\C \mbox{ with} \\
\mbox{quaternionic multiplication by } \cO \\
\end{array} \right\}/\cong\,\\
[\tau ] & \mapsto & [A_{\tau }= \C^2/\Lambda_{\tau}, \, i_{\tau}]
\end{matrix}
\]
where $i=i_{\tau }$ arises from the natural embedding $\cO \hookrightarrow B \hookrightarrow \M_2(\R )$ and $\Lambda_{\tau } = i(\cO) \cdot v$ for $v=\begin{pmatrix}  \tau \\ 1\end{pmatrix}$.

Above, two pairs $(A,i)$ and  $(A',i')$ are isomorphic if there is an isomorphism $\phi:A\longrightarrow A'$
such that $\phi\circ i(\alpha)=i'(\alpha)\circ\phi$ for all $\alpha\in\cO$. Throughout, we shall denote by $P=[A,i]$ the isomorphism class of $(A,i)$, often regarded as a point on $X_0(D, N)$.

A (non-cusp) point $P=[A,i]\in X_0(D,N)(\C)$ is a Heegner (or CM) point by $R$ if $\End(A,i)\simeq R$. We shall denote by $\CM(R)$ the set of such points. By the theory of complex multiplication, we actually have $\CM(R)\subset X_0(D,N)(\bar\Q)$. As it is well known, there is a one-to-one correspondence

\begin{equation}\label{phip}
\begin{matrix}
\CM(R) & \stackrel{\sim }{\to} &  \CM_{D,N}(R) \\
P=[(A, i)] & \mapsto & \varphi_P
\end{matrix}
\end{equation}
where $$\varphi_P: R\simeq\End(A,i)\hookrightarrow\End_{\cO}(H_1(A,\Z))\cong\cO.$$
Throughout, we shall fix the isomorphism $R\simeq\End(A,i)$, to be the canonical one described in \cite[Definition 1.3.1]{Jor}.

Let $P=[A, i]=[A_{\tau}, i_{\tau}]\in\CM(R)$. Via $\varphi_P$, we may regard $\cO$ as a locally free right $R$-module of rank 2. As such,
\[\cO \simeq R\oplus e I,\]
for some $e\in B$ and some locally free $R$-ideal $I$ in $K$. This decomposition allows us to decompose
\[
\Lambda_{\tau}= i(\cO) v = i(\varphi_P R) v\oplus e \cdot i(\varphi_P I) v.
\]

Since $R=\End(A, i)$ and $i(\cO )\otimes \R = \M_2(\R )$, it follows that $i(\varphi_P K) \subset \C \stackrel{\mathrm{diag}}{\hookrightarrow } \M_2(\C)$. Hence $\Lambda_{\tau} = i(\varphi_P R) v\oplus i(\varphi_P I) e v$ and $A$ is isomorphic to the product of two  elliptic curves with CM by $R$, namely $E=\C/R$ and $E_I=\C/I$.
Moreover, the action of $\cO$ on $E\times E_I$ induces the natural left action of $\cO$ on $R\oplus e I$.

\section{Bimodules: Ribet's work}\label{RibWork}

\subsection{Admissible bimodules}\label{Admbim}

Let $Z$ denote either $\Z$ or $\Z_p$ for some prime $p$. Let $\cO$ be an Eichler order and let $\cS$ be a maximal order, both over $Z$, in two possibly distinct quaternion algebras $B$ and $H$. By a $(\cO,\cS)$-bimodule $\cM$ we mean a free module of finite rank over $Z$ endowed with structures of left projective $\cO$-module and right (projective) $\cS$-module. For any $(\cO,\cS)$-bimodules $\cM$ and $\dN$, let us denote by $\Hom_{\cO}^{\cS}(\cM,\dN)$ the set of $(\cO,\cS)$-bimodule homomorphisms from $\cM$ to $\dN$, i.e., $Z$-homomorphisms equivariant for the left action of $\cO$ and the right action of $\cS$. If $\cM=\dN$, we shall write $\End_{\cO}^{\cS}(\cM)$ for $\Hom_{\cO}^{\cS}(\cM,\cM)$.

Notice that, since $\cS $ is maximal, it is hereditary and thus all $\cS$-modules are projective. Note also that $(\cO,\cS)$-bimodules are naturally $\cO \otimes \cS$-modules.

Let $\Sigma$ be the (possibly empty) set of prime numbers which ramify in both $\cO$ and $\cS$. For $p$ in $\Sigma $, let $\dP_\cO$ and $\dP_\cS$ denote the unique two-sided ideals of $\cO$ and $\cS$, respectively, of norm $p$.

\begin{definition}
An $(\cO,\cS)$-bimodule $\cM$ is said to be \emph{admissible} if $\dP_{\cO}\cM=\cM\dP_{\cS} \mbox{  for all  }p\in\Sigma$.
\end{definition}

\begin{remark}\label{bimodM2}
Let $\cM$ be a $(\cO,\cS)$-bimodule of rank $4n$ over $Z$, for some $n\geq 1$. Since $\cM$ is free over $Z$, $\cM$ is also free over $\cS$ as right module, by \cite{Eich}. Up to choosing an isomorphism between $\cM$ and $\cS^n$, there is a natural identification $\End_{\cS}(\cM)\simeq M(n,S)$. Thus, giving a structure of left $\cO$-module on $\cM$ amounts to giving a homomorphism $f:\cO\rightarrow M(n,\cS)$.

Since Eichler orders are Gorenstein (cf.\,\cite{Jsson}), the $\cO$-module $\cM$ is projective if and only if $f$ is optimal. In particular, we conclude that the isomorphism class of a $(\cO, \cS )$-bimodule $\cM$ is completely determined by the $GL(n,S)$-conjugacy class of an optimal embedding $f:\cO\rightarrow M(n,\cS)$. Finally, in terms of $f$, $\cM$ is admissible if and only if $f(\dP_{\cO})= M(n,\dP_{\cS})$ for all $p \in\Sigma$.
\end{remark}

%Let $\End_{\cO}^{\cS}(\cM)$ denote the algebra of $(\cO,\cS)$-bimodule endomorphisms of $\cM$, i.e. the algebra of endomorphisms of $\cM$ that commute with both actions of $\cO$ and $\cS$.

We now proceed to describe Ribet's classification of -local and global- admissible bimodules.
Let $p$ be a prime. Let $\cO$ denote the maximal order in a quaternion division algebra $B$ over $\Q_p$. Let $\dP = \cO\cdot \pi $ be the maximal ideal of $\cO$ and let $\F_{p^2}$ be the residue field of $\dP$.

\begin{theorem}{\cite[Theorem 1.2]{Rib}}
Assume $Z=\Z_p$, $B$ is division over $\Q_p$ and $\cO$ is maximal in $B$.
Let $\cM$ be an admissible $(\cO,\cO)$-bimodule of finite rank over $\Z_p$.
Then
\[\cM\cong\underbrace{\cO\times\dots\times\cO }_{r\mbox{ factors} }\times\underbrace{\dP\times\dots\times\dP }_{s\mbox{ factors} },\]
regarded as a bimodule via the natural action of $\cO$ on $\cO$ itself and on $\dP$ given by left and right multiplication.
In that case, we say that a $(\cO,\cO)$-bimodule $\cM$ is of type $(r,s)$
\end{theorem}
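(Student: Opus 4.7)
The plan is to reduce the classification to an $(\F_{p^2}, \F_{p^2})$-bimodule decomposition of $\bar\cM := \cM/\dP\cM$ and then lift. By Remark \ref{bimodM2}, once one identifies $\cM \cong \cO^n$ as a right $\cO$-module (possible since $\cO$ is hereditary and $\cM$ is finitely generated projective on the right of rank $n$), the left $\cO$-action is given by an optimal embedding $f: \cO \to M(n,\cO)$; admissibility $\dP\cM = \cM\dP$ then amounts to $f(\pi)\,\cO^n = \dP^n$, equivalently $f(\pi) = \pi B$ for some $B \in \GL(n,\cO)$. Classifying admissible bimodules of rank $4n$ thus reduces to classifying such $f$ up to $\GL(n,\cO)$-conjugation.

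The core observation is that $\bar\cM$ carries commuting left and right actions of $\F_{p^2} = \cO/\dP$, i.e., is a module over $\F_{p^2} \otimes_{\F_p} \F_{p^2}$. Since $\F_{p^2}/\F_p$ is separable, this tensor product splits as $\F_{p^2} \times \F_{p^2}$ via the two $\F_p$-algebra maps $a\otimes b \mapsto ab$ and $a \otimes b \mapsto a^\sigma b$, where $\sigma$ denotes the Frobenius. One therefore obtains a canonical decomposition $\bar\cM = V_+ \oplus V_-$, where
$$
V_+ = \{\bar m \in \bar\cM : a\bar m = \bar m a \text{ for all } a \in \F_{p^2}\}, \quad V_- = \{\bar m \in \bar\cM : a\bar m = \bar m a^\sigma \text{ for all } a \in \F_{p^2}\}.
$$
Setting $r = \dim_{\F_{p^2}} V_+$ and $s = \dim_{\F_{p^2}} V_-$ (so $r+s=n$), a direct computation on the two model bimodules shows that $\cO/\dP = \F_{p^2}$ contributes only to $V_+$ (being commutative), whereas $\dP/\dP^2 \cong \F_{p^2}$ contributes only to $V_-$, since $\pi a \equiv a^\sigma \pi \pmod{\dP^2}$ for $a$ in the unramified quadratic subring of $\cO$. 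Hence $\cO^r \oplus \dP^s$ produces the same invariants $(r,s)$ as $\cM$, which forces uniqueness of the pair.

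For existence I would argue by induction on $n$. Pick $\bar m \neq 0$ in $V_+$ (respectively $V_-$), lift to $m \in \cM$, and consider the sub-bimodule $\cO m \cO$: admissibility together with Nakayama's lemma show that the natural surjection $\cO \twoheadrightarrow \cO m \cO$, $1 \mapsto m$ (resp.\ $\dP \twoheadrightarrow \cO m \cO$, $\pi \mapsto m$) is an isomorphism. The main obstacle is then to split $\cO m \cO$ off as an $(\cO,\cO)$-bimodule direct summand of $\cM$; I would achieve this by lifting the canonical projector $\bar\cM \twoheadrightarrow V_{\pm}$ to an idempotent in $\End_{\cO}^{\cO}(\cM)$, exploiting the semisimplicity of $\bar\cM$ over $\F_{p^2}\otimes_{\F_p}\F_{p^2}$ together with the bimodule projectivity of $\cM$. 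This idempotent lifting step is a non-commutative analogue of Wedderburn-Malcev and is the delicate part of the argument; once available, iteration yields $\cM \cong \cO^r \oplus \dP^s$, completing the proof.
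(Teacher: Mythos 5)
Your strategy hits the key idea of Ribet's proof: the separability of $\F_{p^2}/\F_p$, equivalently the fact that the unramified quadratic subring $W\subset\cO$ satisfies $W\otimes_{\Z_p}W\cong W\times W$, is what makes the classification possible. However, the way you deploy it differs from Ribet's in a way that creates real difficulties, and the existence step is not carried out.

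First, Ribet does not reduce mod $\dP$ and then lift; he applies the idempotents of $W\otimes_{\Z_p}W\cong W\times W$ directly to $\cM$, giving an \emph{integral} decomposition $\cM=\cM_1\oplus\cM_2$ of $(W,W)$-bimodules with no lifting required. Your route, which only decomposes $\bar\cM=\cM/\dP\cM$, manufactures a lifting problem that the natural argument avoids; and even granting that one can lift the two idempotents from $\End^{\F_{p^2}}_{\F_{p^2}}(\bar\cM)$ to $\End^{\cO}_{\cO}(\cM)$ (which requires knowing the reduction map hits $e_\pm$ and that the kernel is suitably nilpotent/complete), the cleanest justification is precisely to observe that these idempotents already exist in $W\otimes_{\Z_p}W$ — which is Ribet's step.

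Second, and more seriously, the assertion that there is a \emph{natural surjection of bimodules} $\cO\twoheadrightarrow\cO m\cO$, $1\mapsto m$, is not well posed. Such a map is $\cO$-bilinear only if $am=ma$ for every $a\in\cO$ (in particular $\pi m=m\pi$). A lift $m$ of $\bar m\in V_+$ satisfies $am\equiv ma$ only modulo $\dP\cM$; producing a genuinely central lift is exactly the hard part of the proof, and your sketch does not supply it. Relatedly, even after one has the splitting $\cM=\cM_+\oplus\cM_-$, one still has to show $\cM_+\cong\cO^r$ (and $\cM_-\cong\dP^s$). Writing $\cM_+\cong\cO^r$ as a right $\cO$-module, the left action is a map $f_+:\cO\to M(r,\cO)$ that is congruent to the diagonal embedding modulo $\dP$, and one must prove that $f_+$ is $\GL(r,\cO)$-conjugate to the diagonal. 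This is a nontrivial integral rigidity statement (Skolem--Noether gives conjugacy over $B$, but one needs a unit of $M(r,\cO)$, not merely $M(r,B)$). You acknowledge this is ``the delicate part,'' but the Wedderburn--Malcev idempotent lifting does not resolve it: splitting $\cM$ into $\cM_\pm$ and identifying $\cM_+$ with $\cO^r$ as a bimodule are logically distinct steps, and the second one is where the actual content of the theorem lies. As written, the proposal identifies the correct invariants $(r,s)$ and the correct decomposition principle, but leaves the existence half of the classification essentially open.
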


%\subsection{Isomorphism Classes of Bimodules}
Using the above local description, Ribet classifes global bimodules of rank 8 over $\Z$ in terms of their algebra of endomorphisms, {\em provided $\cO $ is maximal.} Hence, assume for the rest of this subsection that $Z=\Z$ and $\cO$ is maximal in $B$. Write $D_B=\disc(B)$, $D_H=\disc(H)$ for their reduced discriminants. Let $\cM$ be a $(\cO,\cS)$-bimodule of rank 8, set
\[D_0^\cM=\prod_{p\mid D_B D_H,p\not\in\Sigma} p\]
and let $\mathcal{C}$ be the quaternion algebra over $\Q $ of reduced discriminant $D_0^\cM$. Note that the class of $\mathcal C$ in the Brauer group $\Br(\Q)$ of $\Q $ is the sum of the classes of the quaternion algebras $B$ and $H$.

We shall further assume for convenience that $D_0^\cM\ne 1$, i.\,e., $B\not \simeq H$.

\begin{proposition}\label{EndbiMod}\cite[Proposition 2.1]{Rib}
$\End_{\cO}^{\cS}(\cM)\otimes\Q\,\simeq \,\mathcal{C}$ and the ring $\End_{\cO}^{\cS}(\cM)$ is an Eichler order in $\mathcal{C}$ of level $\prod_k p_k$, where $p_k$ are the primes in $\Sigma$ such that $\cM_{p_k}$ is of type $(1,1)$.
\end{proposition}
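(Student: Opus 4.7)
The approach is local-to-global: because $\cM$ is finitely generated and projective on both sides, the Hom functor commutes with localization, so $\End_{\cO}^{\cS}(\cM)_p=\End_{\cO_p}^{\cS_p}(\cM_p)$ for every prime $p$. The plan is to compute each local endomorphism ring, recognize its Brauer class as that of $\mathcal{C}_p$, and identify the local order, then reassemble globally using that a $\Z$-order is determined by its localizations and that a quaternion $\Q$-algebra is determined by its local invariants (Hasse-Brauer-Noether).

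At primes $p\notin \Sigma$ the $p$-component of at least one of $\cO,\cS$ is the matrix algebra $M_2(\Z_p)$. Picking a rank-one idempotent $e$ in that factor and applying Morita equivalence identifies $\End_{\cO_p}^{\cS_p}(\cM_p)$ with either $\End_{\cO_p}(\cM_p e)$ or $\End_{\cS_p}(e\cM_p)^{op}$, and the $\Z_p$-rank count forces the Morita-transform to be free of rank one over the remaining factor. This gives $M_2(\Z_p)$ in the doubly split case $p\nmid D_BD_H$, and the unique maximal order of the division quaternion algebra $B_p$ (resp.\ $H_p$) if $p$ ramifies in exactly one of $B$ or $H$. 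In both sub-cases the local algebra is already $\mathcal{C}_p$ and the local order is maximal in it.

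At primes $p\in\Sigma$ one has $B_p\simeq H_p$ because there is a unique division quaternion algebra over $\Q_p$; identifying $\cS_p$ with $\cO_p$ puts us in the situation of the quoted Ribet theorem, so $\cM_p\simeq \cO_p^{\oplus r}\oplus \dP^{\oplus s}$ with $r+s=2$. The four block entries of the matrix description of $\End_{\cO_p}^{\cO_p}(\cM_p)$ reduce to bimodule homs that extend to $B_p$-bimodule homs $B_p\to B_p$, which are multiplications by elements $c\in Z(B_p)=\Q_p$; the integrality of the target then gives
\[
\End^{\cO_p}_{\cO_p}(\cO_p)=\End^{\cO_p}_{\cO_p}(\dP)=\Z_p,\qquad \Hom^{\cO_p}_{\cO_p}(\dP,\cO_p)=\Z_p,\qquad \Hom^{\cO_p}_{\cO_p}(\cO_p,\dP)=p\Z_p,
\]
the asymmetry coming from $v_{B_p}(\pi)=1$ while $v_{B_p}(p)=2$. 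Consequently $\End_{\cO_p}^{\cO_p}(\cM_p)\simeq M_2(\Z_p)$ when $(r,s)\in\{(2,0),(0,2)\}$, and $\End_{\cO_p}^{\cO_p}(\cM_p)\simeq \bigl(\begin{smallmatrix}\Z_p&\Z_p\\ p\Z_p&\Z_p\end{smallmatrix}\bigr)$, an Eichler order of level $p$ in $M_2(\Q_p)$, when $(r,s)=(1,1)$.

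Combining: the local algebras are 4-dimensional central simple $\Q_p$-algebras that are division precisely for $p$ ramifying in exactly one of $B,H$, i.e.\ for $p\mid D_0^{\cM}$; thus $\End_{\cO}^{\cS}(\cM)\otimes\Q$ has the same local Brauer invariants as $\mathcal{C}$ and must equal it. The local computations further show that $\End_{\cO}^{\cS}(\cM)_p$ is maximal in $\mathcal{C}_p$ at every prime except for the primes $p_k\in\Sigma$ at which $\cM_{p_k}$ is of type $(1,1)$, where it is Eichler of level $p_k$; hence globally $\End_{\cO}^{\cS}(\cM)$ is an Eichler order in $\mathcal{C}$ of level $\prod_k p_k$. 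I expect the main obstacle to be the careful computation of the off-diagonal bimodule Hom spaces at primes $p\in\Sigma$: it is precisely the appearance of $p\Z_p$ in one direction and $\Z_p$ in the other (rather than $\Z_p$ in both, or $p\Z_p$ in both) that produces an Eichler order of level exactly $p$ in the $(1,1)$ case and distinguishes it from the maximal orders appearing in the $(2,0)$ and $(0,2)$ cases.
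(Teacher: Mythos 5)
Your argument is essentially the one Ribet gives for \cite[Proposition~2.1]{Rib}, which the paper simply cites: reduce to local computations, handle $p\notin\Sigma$ by Morita equivalence on whichever of $\cO_p,\cS_p$ is split, and handle $p\in\Sigma$ via the local type classification together with the valuation computation of the four bimodule $\Hom$ spaces, then reassemble. The local calculations are all correct, including the crucial asymmetry $\Hom^{\cO_p}_{\cO_p}(\cO_p,\dP)=p\Z_p$ versus $\Hom^{\cO_p}_{\cO_p}(\dP,\cO_p)=\Z_p$ coming from the odd valuation of $\pi$. One small point worth tightening: before invoking Hasse--Brauer--Noether you need to know that $\End_{\cO}^{\cS}(\cM)\otimes\Q$ is a central simple $\Q$-algebra of dimension $4$, not merely an algebra whose completions happen to be quaternionic; this does follow from your local computations (the rank is $4$, and both the center and the Jacobson radical must vanish since they do after $\otimes\Q_p$ for every $p$), but it is cleaner, and closer to Ribet's own route, to note that $\cM\otimes\Q$ is a module over $B\otimes_\Q H^{\mathrm{op}}\cong M_2(\mathcal{C})$ of dimension $8$, hence (using $D_0^{\cM}\neq 1$, so that $\mathcal{C}$ is division) is the unique simple module, and its endomorphism ring is $\mathcal{C}$ by Schur--Wedderburn.
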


Assume now that $\cO$ and $\cS$ are equipped with orientations. According to \cite[p.\,17]{Rib}, the Eichler order $\Lambda=\End_{\cO}^{\cS}(\cM)$ is endowed with orientations in a natural way, and the isomorphism class of $\cM$ is determined by the isomorphism class of $\Lambda$ as an oriented Eichler order.

Let $N_0^\cM$ be the product of the primes $p_k$ in $\Sigma$ such that $\cM_{p_k}$ is of type $(1,1)$.

\begin{theorem}\label{1-1bimod}{\cite[Theorem 2.4]{Rib}}
The map $\cM\mapsto \Lambda$ induces a one-to-one correspondence between the set of isomorphism classes of admissible rank-8 bimodules  of type $(r_p,s_p)$ at $p\in\Sigma$, and the set $\Pic(D_0^\cM,N_0^\cM)$ of isomorphism classes of oriented Eichler orders.
\end{theorem}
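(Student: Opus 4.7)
My plan is to establish the bijection by a local analysis at every prime followed by a local--global gluing, with careful tracking of orientation data. The rational backdrop comes from the Brauer identity $[B]+[H]=[\mathcal{C}]$ (using $[H^{op}]=[H]$ for quaternion algebras), which yields an isomorphism $B\otimes_\Q H^{op}\simeq \M_2(\mathcal{C})$. Any $(\cO,\cS)$-bimodule $\cM$ thereby becomes a left $\M_2(\mathcal{C})$-module rationally, so by Morita $\cM\otimes\Q\simeq\mathcal{C}^{\oplus 2}$ and $\End_\cO^\cS(\cM)\otimes\Q\simeq\mathcal{C}$, consistently with Proposition \ref{EndbiMod}.

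Next I would classify the local bimodule $\cM_p$ at each prime $p$ and read off $\Lambda_p=\End_{\cO_p}^{\cS_p}(\cM_p)$ together with the orientation data described on p.~17 of Ribet. At $p\in\Sigma$, Ribet's local theorem determines $\cM_p$ by its type $(r_p,s_p)$: a direct computation of the centralizer identifies $\Lambda_p$ as a maximal order in $\mathcal{C}_p\simeq \M_2(\Q_p)$ when $(r_p,s_p)\in\{(2,0),(0,2)\}$, and as an Eichler order of level $p$ when $(r_p,s_p)=(1,1)$, with the orientation at $p$ recording which of the two containing maximal orders is distinguished. At $p\mid D_0^\cM$ exactly one of $B_p,H_p$ is division, so $\mathcal{C}_p$ is division; a small case analysis classifies $\cM_p$ up to isomorphism by a residue-field homomorphism $\Lambda_p\to\F_{p^2}$ which is precisely the orientation datum at $p$. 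At primes $p\nmid D_B D_H$, both $B_p$ and $H_p$ are split; $\cM_p$ is essentially unique by Morita equivalence, and $\Lambda_p$ is automatically a maximal order in $\mathcal{C}_p\simeq \M_2(\Q_p)$.

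For injectivity, if $\Lambda\cong\Lambda'$ as oriented Eichler orders, then the local analysis yields bimodule isomorphisms $\cM_p\simeq\cM'_p$ for every $p$, compatible with the rational identifications $\cM\otimes\Q\simeq\mathcal{C}^{\oplus 2}\simeq\cM'\otimes\Q$. An adelic gluing of these local isomorphisms then produces $\cM\simeq\cM'$ globally. For surjectivity, given $\Lambda_0\in\Pic(D_0^\cM,N_0^\cM)$, I would reverse the local analysis to construct, at every prime $p$, a local bimodule $\cM_p$ of the prescribed type with the desired local orientation, and then use the rational vector space $\mathcal{C}^{\oplus 2}$ as the global skeleton to glue these into an admissible global bimodule $\cM$ with $\End_\cO^\cS(\cM)\cong\Lambda_0$ as oriented Eichler orders.

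The main obstacle is the matching of orientation data: one must verify that at each $p\mid D_0^\cM N_0^\cM$ the orientation datum on $\Lambda_p$ is neither finer nor coarser than the local isomorphism class of $\cM_p$, so that the map $\cM\mapsto\Lambda$ is both well-defined and exactly onto $\Pic(D_0^\cM,N_0^\cM)$. The case $p\in\Sigma$ with type $(1,1)$ is the most delicate, since there the bimodule must carry precisely the extra information needed to pin down the orientation of the level-$p$ Eichler order $\Lambda_p$, without introducing any redundant invariant.
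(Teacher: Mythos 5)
The paper does not actually prove this statement; it is quoted verbatim from Ribet as \cite[Theorem 2.4]{Rib}, so there is no internal argument to compare against. Your outline reproduces the correct rational and local setup: the Brauer/Morita identification giving $\End_\cO^\cS(\cM)\otimes\Q\simeq\mathcal{C}$ (consistent with Proposition \ref{EndbiMod}), the local type classification at $p\in\Sigma$, the residue-field analysis at $p\mid D_0^\cM$, and the remark that unramified primes contribute nothing. That much is faithful to Ribet's strategy.

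The gap is in the global step. You claim that once $\Lambda\cong\Lambda'$ as oriented Eichler orders, local bimodule isomorphisms $\cM_p\simeq\cM_p'$ compatible with the rational identification can be ``adelically glued'' to a global isomorphism $\cM\simeq\cM'$, and likewise you construct the surjectivity preimage by gluing local pieces. This is false in exactly the regime where the theorem has content: in the setting of the paper $\mathcal{C}$ is a \emph{definite} quaternion algebra ($B$ indefinite, $H$ definite, so $[B]+[H]$ is ramified at $\infty$), strong approximation fails for $\mathcal{C}^\times$, and the class number $h(D_0^\cM,N_0^\cM)$ is in general greater than $1$. All bimodules of the prescribed type lie in a single genus, yet they map onto all of $\Pic(D_0^\cM,N_0^\cM)$ --- that is precisely the obstruction to a local-to-global gluing. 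The missing ingredient is the Morita-type equivalence of \cite[Theorem 2.3]{Rib}, quoted in this paper as Theorem \ref{1-1corrbim}: $\dN\mapsto\Hom_\cO^\cS(\cM,\dN)$ is a bijection between $\Pic_\cO^\cS(\cM)$ and isomorphism classes of locally free rank-$1$ right $\Lambda$-modules. It is this correspondence (together with the orientation matching you correctly flag) that turns a \emph{global} isomorphism $\Lambda\cong\Lambda'$ into principality of the intertwining $\Lambda$-ideal $\Hom_\cO^\cS(\cM,\cM')$, hence into $\cM\cong\cM'$, and that realizes every class in $\Pic(D_0^\cM,N_0^\cM)$ as $I\otimes_\Lambda\cM$ for a suitable $\Lambda$-ideal $I$. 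Without invoking this equivalence (or an equivalent class-set computation), the injectivity and surjectivity claims do not go through.
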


\subsection{Supersingular surfaces and bimodules}\label{ss-bim}

Let $p$ be a prime and let $\F$ be a fixed algebraic closure of $\F_p$.
An abelian surface $\widetilde A/\F$ is \emph{supersingular} if it is isogenous to a product of supersingular elliptic curves over $\F$. Given a supersingular abelian surface $\widetilde A$, one defines \emph{Oort's invariant} $a(\widetilde A)$ as follows. If $\widetilde A$ is isomorphic to a product of supersingular elliptic curves, set $a(\widetilde A)=2$; otherwise set $a(\widetilde A)=1$ (see \cite[Chapter 1]{Oort} for an alternative definition of this invariant).

%We also say that a QM-abelian surface $(\widetilde A,\widetilde i)/\F$ is supersingular if $\widetilde A$ is supersingular.

Let $B$ be an indefinite quaternion algebra over $\Q $, of reduced discriminant $D$. Let $N\geq 1$, $(N,pD)=1$ and let $\cO \in \cE(D,N)$.

In this subsection we shall consider pairs $(\widetilde A,\widetilde i)$ of supersingular abelian surfaces with QM by $\cO$ over $\F$ such that $a(\widetilde A)=2$. By a theorem of Deligne (cf.\,\cite[Theorem 3.5]{Shi}) and Ogus \cite[Theorem 6.2]{Ogu}, $\widetilde{A}\cong \widetilde E\times \widetilde E$, where $\widetilde E$ is any fixed supersingular elliptic curve over $\F$.

Let us denote by $\cS=\End(\widetilde E)$ the endomorphism ring of $\widetilde E$. According to a well known theorem of Deuring \cite{Deu}, $\cS$ is a maximal order in the quaternion algebra $H=\cS\otimes \Q$, which is definite of discriminant $p$.
Therefore, giving such an abelian surface $(\widetilde{A},\widetilde{i})$ with QM by $\cO$ is equivalent to providing an optimal embedding
\[\widetilde{i}:\cO\longrightarrow M(2,\cS)\simeq\End(\widetilde{A}),\]
or, thanks to Remark \ref{bimodM2}, a $(\cO,\cS)$-bimodule $\cM$ of rank 8 over $\Z$.

\begin{remark}\label{OrientationS} The maximal order $\cS$ comes equipped with a natural orientation at $p$, and therefore can be regarded as an element of $\E(p,1)$. This orientation arises from the action of $\cS$ on a suitable  quotient of the Dieudonn\'e module of the elliptic curve $\widetilde E$, which turns out to be described by a character $\cS\rightarrow \F_{p^2}$. See \cite[p.\,37]{Rib} for more details.
\end{remark}

Let $\cM=\cM_{(\widetilde{A},\widetilde{i})}$ be the bimodule attached to the pair $(\widetilde{A},\widetilde{i})$ by the above construction. Then
\begin{equation}\label{end}
\End_{\cO}^{\cS}(\cM) \simeq \{\gamma\in M(2,\cS)\simeq\End(\widetilde{A})\;\mid\;\gamma\circ\widetilde{i}(\alpha)=\widetilde{i}(\alpha)\circ\gamma,\;\mbox{for all } \alpha\in\cO\}=\End(\widetilde{A},\widetilde{i}).
\end{equation}

The above discussion allows us to generalize Ribet's Theorem \ref{1-1bimod}  to $(\cO,\cS)$-bimodules where $\cO$ is  an Eichler order in $B$ of level $N$, $(N,p)=1$, not necessarily maximal. Keep the notation $\Sigma$, $D_0^\cM$ and $N_0^\cM$ as in \S\ref{Admbim}.

\begin{theorem}\label{1-1bimodref}
The map $\cM\mapsto \End_\cO^\cS(\cM)$ induces a one-to-one correspondence between the set of isomorphism classes of admissible $(\cO,\cS)$-bimodules $\cM$ of rank $8$ over $\Z $ and of type $(r_p,s_p)$ at $p\in\Sigma$, and the set $\Pic(D_0^\cM,NN_0^\cM)$ of isomorphism classes of oriented Eichler orders.
\end{theorem}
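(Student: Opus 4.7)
The plan is to reduce to Ribet's Theorem \ref{1-1bimod} by a local analysis at each rational prime, focusing on the new contribution at primes $\ell\mid N$: these are the only primes where $\cO_\ell$ fails to be maximal. At every $\ell\nmid N$ Ribet's analysis applies verbatim and produces the expected local factor of $\End_\cO^\cS(\cM)$: a maximal order at $\ell\nmid D_BD_H$, and a local Eichler piece of type $(r_\ell,s_\ell)$ at $\ell\in\Sigma$. Globalized, these contributions yield discriminant $D_0^\cM$ and level $N_0^\cM$ in $\mathcal{C}$; the new content is to extract an extra factor of $N$ in the level, coming from the primes dividing $N$.

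At such a prime $\ell\mid N$ one has $\ell\nmid pD$, hence $B_\ell\simeq H_\ell\simeq M_2(\Q_\ell)$, $\cS_\ell\simeq M_2(\Z_\ell)$, and $\cO_\ell$ is Eichler of level $\ell^n$ with $n=v_\ell(N)$. Since $\cS_\ell$ is Morita equivalent to $\Z_\ell$, an $(\cO_\ell,\cS_\ell)$-bimodule $\cM_\ell$ of $\Z_\ell$-rank $8$ corresponds, via its right $\cS_\ell$-module structure, to a left $\cO_\ell$-module $P$ of $\Z_\ell$-rank $4$; by Remark \ref{bimodM2}, the bimodule condition translates to the induced map $f\colon\cO_\ell\to M_4(\Z_\ell)$ being optimal. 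Writing $P_1=\cO_\ell e_{11}$ and $P_2=\cO_\ell e_{22}$ for the two indecomposable projective left $\cO_\ell$-modules (both of $\Z_\ell$-rank $2$), Krull--Schmidt for hereditary orders gives three candidates: $P_1^{\oplus 2}$, $P_1\oplus P_2$, and $P_2^{\oplus 2}$. I would first show that only $P_1\oplus P_2$ yields an optimal embedding of $\cO_\ell$: the $\cO_\ell$-action on $P_i$ factors through the maximal order $\cO^{(i)}_\ell\supsetneq\cO_\ell$ stabilizing $P_i$ as a lattice in the simple module $B_\ell e_{ii}\simeq\Q_\ell^2$, so $P_i^{\oplus 2}$ produces $f^{-1}(M_4(\Z_\ell))=\cO^{(i)}_\ell\ne\cO_\ell$, whereas for $P_1\oplus P_2$ one has $f^{-1}(M_4(\Z_\ell))=\cO^{(1)}_\ell\cap\cO^{(2)}_\ell=\cO_\ell$.

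Next, using $\End_{\cO_\ell}(P_i)=e_{ii}\cO_\ell e_{ii}\simeq\Z_\ell$ and $\Hom_{\cO_\ell}(P_i,P_j)=e_{jj}\cO_\ell e_{ii}$, I would compute $\End_{\cO_\ell}(P_1\oplus P_2)$ as a $2\times 2$ Hom-matrix whose $(2,1)$-block is $\ell^n\Z_\ell$; this recovers an Eichler order of level $\ell^n$ in $\mathcal{C}_\ell\simeq M_2(\Q_\ell)$, and via Morita this equals $\End_{\cO_\ell}^{\cS_\ell}(\cM_\ell)$. Combined with Ribet's local analysis at the primes $\ell\nmid N$, the global order $\End_\cO^\cS(\cM)$ is Eichler of discriminant $D_0^\cM$ and level $NN_0^\cM$ in $\mathcal{C}$, equipped with natural orientations at each prime of its level. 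Bijectivity of $\cM\mapsto\End_\cO^\cS(\cM)$ then follows by mimicking Ribet's class-group argument from \cite[Theorem 2.4]{Rib}: both sides admit parallel adelic double-coset descriptions, and the local identifications above match them place by place.

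The main obstacle I anticipate is the orientation bookkeeping at primes $\ell\mid N$: the endomorphism Eichler order lies inside two maximal orders of $\mathcal{C}_\ell$, and a canonical choice of one of them (needed to land in the oriented Picard set) has to be extracted from the bimodule structure together with the orientations of $\cO$ and $\cS$. Concretely this amounts to tracking how the ordering of the summands in $P_1\oplus P_2$, which is invisible in the bare isomorphism class, is determined by the oriented input data. A secondary point is that Proposition \ref{EndbiMod} was stated only for maximal $\cO$, so the identification $\End_\cO^\cS(\cM)\otimes\Q\simeq\mathcal{C}$ must be recovered in the Eichler setting as part of the argument; this, however, follows automatically from the local computation above.
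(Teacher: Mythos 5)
Your plan is mathematically reasonable, but it takes a genuinely different route from the paper. The paper's proof is geometric and piggybacks on results already in \cite{Rib}: it translates the admissible $(\cO,\cS)$-bimodule $\cM$ into an abelian surface $(\widetilde A,\widetilde i)$ with QM by $\cO$ (via the identification $\End_\cO^\cS(\cM)\simeq\End(\widetilde A,\widetilde i)$ of \eqref{end}), then invokes the moduli-theoretic dictionary of Appendix A to trade the pair $(\widetilde A,\widetilde i)$ for a triple $(\widetilde A_0,\widetilde i_0,C)$ with QM by a \emph{maximal} order $\cO_0\supseteq\cO$ and a $\Gamma_0(N)$-structure $C$. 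Ribet had already shown (Theorems 3.4 and 4.15 of \cite{Rib}) that such triples are classified by $\Pic(D_0^\cM,NN_0^\cM)$ via their endomorphism ring $\End(\widetilde A_0,\widetilde i_0,C)$, so the only genuinely new ingredient the paper needs is Proposition \ref{ChangInt} of Appendix A, which asserts $\End(\widetilde A,\widetilde i)=\End(\widetilde A_0,\widetilde i_0,C)$; the bijectivity, orientation bookkeeping, and level computation all come for free from Ribet's statement for triples. Your approach instead is entirely algebraic and local: Morita equivalence at $\ell\mid N$ reduces the bimodule to a rank-$4$ projective left $\cO_\ell$-module, Krull--Schmidt isolates $P_1\oplus P_2$ as the unique optimal choice, and the explicit $\Hom$-matrix computation produces the Eichler order of level $\ell^{v_\ell(N)}$ --- all correct. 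This has the advantage of being purely order-theoretic (no abelian surfaces, no moduli), but it buys that at a real cost: you must re-derive the adelic class-group bijection and the orientation matching in the Eichler setting rather than inheriting them from Ribet's classification of triples, and you correctly flag both of these as the hard residual points. In short: the paper reduces the theorem to Proposition \ref{ChangInt} plus Ribet's geometric classification; you reduce it to a local Morita computation plus a re-run of Ribet's adelic argument. Both are legitimate; the paper's path is shorter given the machinery it already has in place, while yours would be preferable in a context where one wants to avoid Appendix A entirely.
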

\begin{proof} Given $\cM$ as in the statement, let $(\widetilde{A},\widetilde{i})$ be the abelian surface with QM by $\cO $ over $\F$ attached to $\cM$ by the preceding discussion. As explained in Appendix A, there is a one-to-one correspondence between isomorphism classes of such pairs $(\widetilde{A},\widetilde{i})$ and triples $(\widetilde A_0,\widetilde i_0, C)$, where $(\widetilde A_0,\widetilde i_0)$ has QM by a maximal order $\cO_0\supseteq\cO$ and $C$ is a $\Gamma_0(N)$-level structure.

Using Theorem \ref{1-1bimod}, Ribet proves that $\End(\widetilde A_0,\widetilde i_0, C)\subseteq\End(\widetilde A_0,\widetilde i_0)$ is an Eichler order of level $NN_0$ and that such triples are characterized by their class in $\Pic(D_0^\cM,NN_0^\cM)$. We refer the reader to \cite[Theorem 4.15]{Rib} for $p\mid D$ and to \cite[Theorem 3.4]{Rib} for $p\nmid D$.

Finally, $\End(\widetilde{A},\widetilde{i})=\End(\widetilde A_0,\widetilde i_0, C)$ by Proposition \ref{ChangInt} in Appendix A. This yields the desired result.
\end{proof}

\section{Supersingular specialization of Heegner points}\label{ssesp}

As in the previous section, let $p$ be a prime and let $\F$ be a fixed algebraic closure of $\F_p$. Let $B$ be an indefinite quaternion algebra over $\Q$ of discriminant $D$. Let $N\geq 1,\;(D,N)=1$ and $\cO\in \cE(D,N)$. Let $P=[A,i]\in X_0(D,N)(\overline\Q)$ be a (non-cusp) point on the Shimura curve $X_0(D,N)$.
Pick a field of definition  $M$ of $(A,i)$.

Fix a prime $\mathfrak{P}$ of $M$ above $p$ and let $\widetilde{A}$ denote the special fiber of the N\'eron model of $A$ at $\dP$. By \cite[Theorem 3]{Rib2}, $A$ has potential good reduction at $\dP$. Hence, after extending the field $M$ if necessary, we obtain that $\widetilde{A}$ is smooth over $\F$.

Since $A$ has good reduction at $\dP$, the natural morphism
$\phi:\End(A)\hookrightarrow\End(\widetilde{A})$
is injective. The composition $\widetilde{i}=\phi\circ i$ yields an optimal embedding
$\widetilde{i}:\cO\hookrightarrow\End(\widetilde{A})$ and the isomorphism class of the pair $(\widetilde{A},\widetilde{i})$ corresponds to the reduction $\tilde P$ of the point $P$ on the special fiber $\widetilde{X}_0(D,N)$ of $\cX_0(D,N)/\Z$ at $p$.

Let us denote by $\phi_P:\End(A,i)\hookrightarrow\End(\widetilde{A},\widetilde{i})$ the restriction of $\phi$ to $\End(A,i)$.

\begin{lemma}\label{lemaopt}
The embeddings $\phi$ and $\phi_P$ are optimal locally at every prime but possibly at $p$.
\end{lemma}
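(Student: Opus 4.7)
The plan is to verify optimality prime by prime, reducing the condition at each prime $\ell\neq p$ to a divisibility statement that can be verified on $\ell$-torsion.

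Fix $\ell\neq p$. Since $\End(A)$ and $\End(\widetilde A)$ are finitely generated free $\Z$-modules, optimality of $\phi$ at $\ell$ is equivalent to $\phi$ being injective modulo $\ell$, i.e.\ to the implication
\[
\alpha\in\End(A),\;\phi(\alpha)\in\ell\,\End(\widetilde A)\ \Longrightarrow\ \alpha\in\ell\,\End(A).
\]
To prove this implication I would use the good reduction of $A$: because $\ell\neq p$, the group scheme $A[\ell]$ extends to an \'etale finite group scheme over the N\'eron model at $\mathfrak P$, so the specialization map $A[\ell](\overline M)\to\widetilde A[\ell](\F)$ is a bijection, equivariant with respect to the actions of $\End(A)$ and $\End(\widetilde A)$ via $\phi$. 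Writing $\phi(\alpha)=\ell\beta$ forces $\phi(\alpha)$ to annihilate $\widetilde A[\ell]$; transporting through the bijection, $\alpha$ annihilates $A[\ell]$, and by the universal property of the isogeny $[\ell]\colon A\to A$ there then exists $\gamma\in\End(A)$ with $\alpha=\ell\gamma$. This proves the optimality of $\phi$ at every $\ell\neq p$.

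The analogous statement for $\phi_P$ follows by descending to the commutant of $\cO$. If $\beta\in\End(A,i)^0$ satisfies $\phi_P(\beta)\in\End(\widetilde A,\widetilde i)_\ell$, then $\beta\in\End(A)^0$ and $\phi(\beta)\in\End(\widetilde A)_\ell$, so the optimality of $\phi$ at $\ell$ just proved gives $\beta\in\End(A)_\ell$; and $\beta\in\End(A,i)^0$ already commutes with every element of $\cO$, so $\beta\in\End(A)_\ell\cap\End(A,i)^0=\End(A,i)_\ell$, as required. The single nontrivial input is the equivariant bijection $A[\ell]\simeq\widetilde A[\ell]$ provided by good reduction; this is standard for $\ell\neq p$ but fails at $\ell=p$, where $A[p]$ generically acquires a nontrivial connected part over the N\'eron model. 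That failure is precisely what excludes $p$ from the statement and what makes the careful study of $\phi_p$ the substantive content of the remainder of the paper.
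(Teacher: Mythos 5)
Your proof is correct, and it takes a genuinely different route from the paper's. The paper invokes Faltings's theorem on Tate's conjecture: it identifies $T_\ell(A)$ with $T_\ell(\widetilde{A})$, uses $\End_M(A)_\ell=\End_{G_M}(T_\ell(A))$, clears denominators on $\alpha\in\End^0(A)_\ell\cap\End(\widetilde{A})_\ell$ to get $n\alpha\in\End_{G_M}(T_\ell(A))$, and then observes that since $\alpha\in\End(\widetilde{A})_\ell\subseteq\End(T_\ell(A))$ and the latter is torsion-free, $\alpha$ itself is $G_M$-equivariant and hence integral. Your argument is more elementary and avoids Faltings entirely: you rephrase local optimality at $\ell$ as injectivity of the reduction map $\End(A)/\ell\to\End(\widetilde{A})/\ell$ (a correct equivalence, since for finite free $\Z_\ell$-lattices $L\subseteq L'$, saturation of $L$ in $L'$ is the same as torsion-freeness of $L'/L$, which is the same as $L/\ell L\hookrightarrow L'/\ell L'$), and you verify it using only the $\End(A)$-equivariant specialization isomorphism $A[\ell]\cong\widetilde{A}[\ell]$, valid precisely because $\ell\neq p$, together with the standard fact that an endomorphism annihilating $A[\ell]$ factors through multiplication by $\ell$. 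This buys a self-contained proof independent of Tate's conjecture and makes the necessity of excluding $\ell=p$ transparent. Your reduction of $\phi_P$ to $\phi$ via $\End(A,i)_\ell=\End(A)_\ell\cap\End^0(A,i)_\ell$ is in substance the same move the paper makes with the identity $\End(A,i)=\End(\widetilde{A},\widetilde{i})\cap\End(A)$.
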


\begin{proof} We first show that $\phi_\ell$ is optimal for any prime $\ell\neq p$. For this, let us identify $\End^0(A)_\ell$ with a subalgebra of $\End^0(\tilde A)_\ell$ via $\phi $, so that we must prove that $\End^0(A)_\ell\cap\End(\widetilde{A})_\ell = \End(A)_\ell$.
It is clear that $\End^0(A)_\ell\cap\End(\widetilde{A})_\ell\supset \End(A)_\ell$.
As for the reversed inclusion, let $\alpha\in\End^0(A)_\ell\cap\End(\widetilde{A})_\ell$ and let $M$ be a field of definition of $\alpha$.
Due to good reduction, there is an isomorphism of Tate modules $T_\ell(A)=T_\ell(\widetilde{A})$. Faltings's theorem on Tate's Conjecture \cite[Theorem 7.7]{Sil} asserts that $\End_M(A)_\ell=\End_{G_{M}}(T_\ell(A))\subseteq\End(T_\ell(A))$, where $G_M$ stands for the absolute Galois group of $M$ and $\End_{G_{M}}(T_\ell(A))$ is the subgroup of $\End(T_\ell(A))$ fixed by the action of $G_M$. Since $\alpha\in\End^0_M(A)_\ell$, there exists $n\in\Z$ such that $n\alpha\in\End_M(A)_\ell=\End_{G_{M}}(T_\ell(A))$. By hypothesis $\alpha\in\End_{\F}(\widetilde{A})_\ell\subseteq \End(T_\ell(A))$, hence it easily follows that $\alpha\in\End_{G_{M}}(T_\ell(A))=\End_M(A)_\ell\subseteq\End(A)_\ell.$

This shows that $\phi_\ell$ is optimal. One easily concludes the same for $(\phi_P)_\ell$ by taking into account that
$\End(A,i)=\End(\widetilde{A},\widetilde{i})\cap \End(A)$.
\end{proof}

\begin{remark}\label{remlemaopt}
Notice that if $\End(A,i)$ is maximal in $\End^0(A,i)$, the embedding $\phi_P$ is optimal.
In fact, if $\End(A,i)$ is not maximal in $\End^0(A,i)$ the embedding $\phi_P$ may not be optimal at $p$. For example, let $R$ be an order in an imaginary quadratic field $K$ of conductor $cp^r$, let $A\simeq E\times E$ where $E$ is an elliptic curve with CM by $R$ and let $i:M_2(\Z)\hookrightarrow M_2(R)=\End(A)$. Then $\End(A,i)\simeq R$ whereas, if $p$ is inert in $K$, $\End(\tilde A,\tilde i)\in\E(p,1)$. Thus if $\phi_P$ was optimal, it shall provide an element of $\CM_{p,1}(R)$ which is impossible by \cite[Corollaire 5.12]{Vig}.
\end{remark}

%Note that if $P=[A,i]\in \CM(R)$ is a Heegner point, $\End(A,i)\simeq R$. In case of supersingular specialization, we will show that $\End(\widetilde{A},\widetilde{i})$ is an order in a quaternion algebra of level $D/p$ if $p\mid D$ (resp. $Dp$ if $p\nmid D$). In view of the previous lemma, $\phi_P$ is an optimal embedding between these two rings, which will be studied in detail in the next sections.

Let $P=[A,i]\in \CM(R)\subset X_0(D,N)(\C )$ and denote by $c$ the conductor of $R$. In \S\ref{ModCM} we proved that $A\cong E\times E_I$ where $E=\C/R, E_I=\C/I$ are elliptic curves with $\CM$ by $R$. Here $I$ is a projective $R$-ideal in $K$.

Assume that $p$ is coprime to $cN$ and does not split in $K$. Then $\widetilde{A}\simeq \widetilde{E}\times\widetilde{E_I}$ is a product of supersingular elliptic curves over $\F$. Let $\cS\in\E(p,1)$ be the endomorphism ring of $\widetilde{E}$ endowed with the natural orientation described in \S\ref{ss-bim}. Since $a(\widetilde A)=2$, we can assign a $(\cO,\cS)$-bimodule $\cM=\cM_{\widetilde P}$ to $\widetilde P=[\widetilde A,\widetilde i]$ as in the previous section.

\begin{theorem}\label{CMbimod}

\begin{enumerate}

\item[(a)] There exists an optimal embedding $\psi:R\hookrightarrow\cS$ such that, for all $P\in\CM(R)$, \begin{equation}\label{MOS} \cM_{\widetilde P}\simeq \cO\otimes_R\cS,\end{equation} where $\cS$ is regarded as left $R$-module via $\psi$ and $\cO$ as right $R$-module via $\varphi_P$.

\item[(b)] Upon the identifications \eqref{end} and \eqref{MOS}, the optimal embedding $\phi_P$ is given by the rule
\[
\begin{array}{rcc}
R & \hookrightarrow & \End_{\cO}^{\cS}(\cO\otimes_R\cS) \\
\delta & \longmapsto & \phi_P(\delta):\alpha\otimes s  \mapsto  \alpha\delta\otimes s,
 \end{array}
% \left(\begin{array}{rccc}
%\delta:&\cO\otimes_R\cS & \rightarrow & \cO\otimes_R\cS\\
% &\alpha\otimes s & \mapsto & \alpha\delta\otimes s
%\end{array}\right)
\]
up to conjugation by $\End_{\cO}^{\cS}(\cO\otimes_R\cS)^{\times}$.
\end{enumerate}
\end{theorem}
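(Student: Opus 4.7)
The plan is to realize both $A/\C$ and its mod-$\mathfrak{P}$ reduction $\widetilde A$ as Serre tensor constructions starting from a (supersingular) CM elliptic curve, which gives explicit access both to the $(\cO,\cS)$-bimodule $\cM_{\widetilde P}$ and to the reduction map $\phi_P$. From the analytic description in \S\ref{ModCM}, the decomposition $\cO\simeq R\oplus eI$ of right $R$-modules (via $\varphi_P$) rewrites as a Serre-tensor isomorphism $A\simeq E\otimes_R\cO$, where $\cO$ carries its natural right $R$-action through $\varphi_P$ and its left $\cO$-action through left multiplication. Under this identification, the canonical isomorphism $R\simeq\End(A,i)$ sends $\delta\in R$ to the endomorphism $e\otimes\alpha\mapsto e\otimes\alpha\varphi_P(\delta)$. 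Since $p\nmid c$ and $p$ is not split in $K$, the reduction $\widetilde E$ is supersingular and the CM-action on $E$ reduces to an optimal embedding $\psi:R\hookrightarrow\End(\widetilde E)=\cS$ (Deuring); crucially $\psi$ depends only on $\widetilde E$, so the same $\psi$ works for all $P\in\CM(R)$.

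Since reduction commutes with Serre's tensor construction, $\widetilde A\simeq\widetilde E\otimes_R\cO$. Rewriting $\widetilde E=\widetilde E\otimes_\cS\cS$ gives $\widetilde A\simeq\widetilde E\otimes_\cS(\cO\otimes_R\cS)$, where $\cO\otimes_R\cS$ carries a natural $(\cO,\cS)$-bimodule structure with $\cS$ regarded as a left $R$-module via $\psi$. Unwinding the correspondence of Remark \ref{bimodM2}---which identifies the abelian surface $\widetilde E\otimes_\cS T$ with QM by $\cO$ with the $(\cO,\cS)$-bimodule $T$---yields $\cM_{\widetilde P}\simeq\cO\otimes_R\cS$, proving (a). For part (b), the endomorphism $\phi_P(\delta)$ is by definition the reduction of $\delta\in R\simeq\End(A,i)$ viewed inside $\End(\widetilde A,\widetilde i)$. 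Transporting the formula from the first paragraph across these identifications shows that $\phi_P(\delta)$ acts on the bimodule $\cO\otimes_R\cS$ by $\alpha\otimes s\mapsto\alpha\varphi_P(\delta)\otimes s$, which is exactly the stated formula once one identifies $\delta$ with $\varphi_P(\delta)\in\cO$. The caveat ``up to conjugation by $\End_\cO^\cS(\cO\otimes_R\cS)^\times$'' is forced on us because the isomorphism $\cM_{\widetilde P}\simeq\cO\otimes_R\cS$ is only canonical up to bimodule automorphisms.

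The main technical obstacle will be to verify rigorously that the reduction map applied to Serre's tensor construction produces the bimodule appearing in Ribet's classification in a functorial way---in other words, that under the dictionary of Remark \ref{bimodM2} the $\cO$-action on $\widetilde E\otimes_\cS(\cO\otimes_R\cS)$ inherited from the ambient QM-structure really coincides with left multiplication on the first factor of $\cO\otimes_R\cS$. A secondary difficulty is bookkeeping: keeping straight the left/right conventions (left $\cO$, right $R$ via $\varphi_P$, left $R$ via $\psi$, right $\cS$) so that no side error corrupts the final formula for $\phi_P(\delta)$.
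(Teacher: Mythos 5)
Your proof is correct and takes essentially the same approach as the paper: where the paper works explicitly with the decomposition $\cO\simeq R\oplus eI$ of \S\ref{ModCM} and tracks the left-multiplication $\cO$-action on $(R\oplus eI)\otimes_R\cS$, you repackage the same data as a Serre tensor isomorphism $A\simeq E\otimes_R\cO$ and invoke compatibility of reduction with the tensor construction, then read off the bimodule via Remark \ref{bimodM2}. The ``technical obstacle'' you flag at the end is exactly what the paper's direct computation verifies, and your formula for $\phi_P(\delta)$ agrees with the paper's once one notes that the paper's $\alpha\delta$ already denotes $\alpha\varphi_P(\delta)$, so no extra identification is needed.
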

\begin{proof}
As explained in \S \ref{ss-bim}, the isomorphism class of the bimodule $\cM_{\widetilde P}$ is completely determined by the optimal embedding $\widetilde{i}:\cO\longrightarrow \End(\widetilde{A})=M(2,\cS)$, which in turn is defined as the composition of $i$ with $\phi:\End(A)\hookrightarrow\End(\widetilde{A})$.

More explicitly, as described in \S\ref{ModCM}, the action of $\cO $ on $A\simeq E\times E_I$ given by the embedding $i:\cO\hookrightarrow \End(A)\simeq\End(E\times E_I)$ can be canonically identified with the action of $\cO$ on the right $R$-module $\cO \simeq R\oplus eI$ given by left multiplication $(\alpha, x)\mapsto \alpha\cdot x$, where here $\cO$ is regarded as a right $R$-module via $\varphi_P$. Therefore, the embedding $\widetilde{i}:\cO\longrightarrow M(2,\cS)$ is the one induced by the action of $\cO$ given by left multiplication on the $\cS$-module $(R\oplus eI)\otimes_R\cS=\cO\otimes_R\cS$, where $\cS$ is viewed as a left $R$-module via $\psi: \End(E)\simeq R\hookrightarrow\End(\widetilde{E})\simeq\cS $. This shows that $\cM_{\widetilde P}\cong\cO\otimes_R\cS$.

Finally, since the action of $\delta\in R\simeq \End(E\times E_I,i)$ on $E\times E_I$ is naturally identified with the action of $\delta$ on $\cO\simeq R\oplus eI$ given by right multiplication, the optimal embedding $R  \hookrightarrow  \End_{\cO}^{\cS}(\cO\otimes_R\cS)$ arising from $\phi_P:\End(A,i)\hookrightarrow \End(\widetilde A,\widetilde i)$ is given by $\phi_P(\delta)(\alpha\otimes s)= \alpha\delta\otimes s$.
This endomorphism clearly commutes with both (left and right) actions of $\cO$ and $\cS$. Moreover this construction is determined up to isomorphism of $(\cO,\cS)$-bimodules, i.e. up to conjugation by $\End_{\cO}^{\cS}(\cM_{\widetilde P})^{\times}$.
\end{proof}

\subsection{The action of $\Pic(R)$}

\begin{definition}
For a $(\cO,\cS)$-bimodule  $\cM$, let
\begin{itemize}
\item $\Pic_\cO^\cS(\cM)$ denote the set of isomorphism classes of $(\cO,\cS)$-bimodules that are locally isomorphic to $\cM$,

\item $\CM_{\cM}(R)$ denote the set of $\End_{\cO}^{\cS}(\cM)^{\times}$-conjugacy classes of optimal embeddings $\varphi: R\hookrightarrow \End_{\cO}^{\cS}(\cM)$,

\item $\CM_{\cO,\cS}^{\cM}(R) = \{( \dN,\psi):\; \dN\in\Pic_\cO^\cS(\cM),\psi\in \CM_{ \dN}(R)\}$.
\end{itemize}
%; considering, not only the ring monomorphism $R\hookrightarrow \End_{\cO}^{\cS}(\cM)$, but the bimodule $\cM$ where $\cO$ and $\cS$ act endowed with orientations
\end{definition}

Let $P=[A,i]\in \CM(R)$ be a Heegner point and let $\cM_{\widetilde P}$ be the $(\cO,\cS)$-bimodule attached to its specialization $\widetilde P=[\widetilde A,\widetilde i]$ at $p$ as described above.
%Recall the optimal embedding $\phi_P:R\hookrightarrow\End_{\cO}^{\cS}(\cM_{\widetilde P})$, .

By Theorem \ref{CMbimod}, the bimodulde $\cM_{\widetilde P}$ and the $\End_{\cO}^{\cS}(\cM_{\widetilde P})^{\times}$-conjugacy class of $\phi_P:R\hookrightarrow\End_{\cO}^{\cS}(\cM_{\widetilde P})$ are determined by the optimal embedding $\varphi_P\in\CM_{D,N}(R)$.
%defining $P$ (to remark that $\phi_P$ actually depends on $\varphi_P$ we shall sometimes write $\phi_{\varphi_P}$ instead of $\phi_P$).
Hence specialization at $p$ induces a map
\begin{equation}\label{Phi}
\phi:\CM_{D,N}(R)\ra\bigsqcup_\cM\CM_{\cO,\cS}^{\cM}(R),\quad\varphi_P\ra(\cM_{\widetilde P},\phi_{P}),
\end{equation}
where $\cM$ runs over a set of representatives of local isomorphism classes of $(\cO,\cS)$-bimodules arising from some $\varphi_P\in\CM_{D,N}(R)$. Note that composing $\phi$ with the natural projection
\begin{equation}\label{proj}
\pi:\bigsqcup_\cM\CM_{\cO,\cS}^{\cM}(R)\ra\bigsqcup_\cM\Pic_\cO^\cS(\cM),\quad (\cM,\psi)\ra \cM,
\end{equation}
one obtains the map $\varphi_P \mapsto \cM_{\widetilde P}$ which assigns to $\varphi_P$ the bimodule that describes the supersingular specialization of the Heegner point associated to it.

%Since any embedding in $\CM_{D,N}(R)$ is locally isomorphic to $\varphi_P$, any $(\cO,\cS)$-bimodule $N$ associated to the reduction modulo $p$ of some point in $\CM(R)$, satisfies $N\in\Pic_\cO^\cS(\cM)$. Thus we have constructed a map $\phi:\CM_{D,N}(R)\ra\CM_{\cO,\cS}^{\cM}(R)$.

For any locally free rank-1 left $\cO$-module $I$, let us consider the right $\cO$-module $$I^{-1}=\{x\in B\;:\;Ix\subseteq \cO\}.$$
It follows directly from Definition \ref{*} that $I\ast\cO$ coincides with the left order of $I^{-1}$, endowed with the natural local orientations.

\begin{lemma}\label{auxactPic}
There is an isomorphism of $(I\ast\cO,I\ast\cO)$-bimodules between
$I\ast\cO$ and $ I^{-1}\otimes_{\cO}I$.
\end{lemma}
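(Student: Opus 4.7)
The natural candidate is the multiplication map
\[
\mu : I^{-1}\otimes_{\cO} I \;\lra\; I\ast\cO,\qquad x\otimes y\;\longmapsto\; xy,
\]
where the product on the right hand side is taken inside the ambient quaternion algebra $B$. My plan is to show that $\mu$ is well defined, is a homomorphism of $(I\ast\cO,I\ast\cO)$-bimodules, and is an isomorphism locally (hence globally) because $I$ is locally principal.

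First I would verify that the formula makes sense. Given $x\in I^{-1}$ and $y\in I$, the definition $Ix\subseteq\cO$ and the fact that $\cO y\subseteq I$ yield
\[
I\cdot(xy)=(Ix)\cdot y\subseteq\cO\cdot y\subseteq I,
\]
so $xy$ lies in the right order of $I$, which by Definition \ref{*} is exactly $I\ast\cO$ (as an order, ignoring orientations, which play no role here). Bilinearity over $\cO$ follows from associativity of $B$: $(x\alpha)y=x(\alpha y)$ for $\alpha\in\cO$, so $\mu$ factors through the tensor product $I^{-1}\otimes_{\cO}I$. Next I would recall that $I^{-1}$ is naturally a $(I\ast\cO,\cO)$-bimodule (the left action being multiplication by the left order of $I^{-1}$, which equals $I\ast\cO$ as noted in the statement) and $I$ is a $(\cO,I\ast\cO)$-bimodule, so $I^{-1}\otimes_{\cO}I$ inherits a $(I\ast\cO,I\ast\cO)$-bimodule structure; the bimodule equivariance of $\mu$, $\gamma(x\otimes y)\mapsto\gamma xy$ and $(x\otimes y)\gamma\mapsto xy\gamma$ for $\gamma\in I\ast\cO$, is immediate from associativity in $B$.

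To conclude that $\mu$ is an isomorphism, I would argue prime-by-prime: it suffices to show that $\mu_p$ is bijective for every prime $p$. Since $I$ is locally principal as a left $\cO$-module, we may write $I_p=\cO_p\alpha_p$ for some $\alpha_p\in B_p^{\times}$. A direct computation then gives
\[
I_p^{-1}=\alpha_p^{-1}\cO_p,\qquad (I\ast\cO)_p=\alpha_p^{-1}\cO_p\alpha_p,
\]
and the composition
\[
\alpha_p^{-1}\cO_p\otimes_{\cO_p}\cO_p\alpha_p\;\xrightarrow{\sim}\;\alpha_p^{-1}\cO_p\alpha_p,\qquad \alpha_p^{-1}c\otimes d\alpha_p\;\longmapsto\;\alpha_p^{-1}(cd)\alpha_p,
\]
is visibly an isomorphism, being induced by the isomorphism $\alpha_p^{-1}\cO_p\cong\cO_p$ of right $\cO_p$-modules. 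Hence $\mu$ is a bimodule isomorphism.

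The main (very mild) obstacle is just the bookkeeping in the first step: confirming that the image of $\mu$ really lies in $I\ast\cO$ and not merely in $B$, and that the left/right actions on $I^{-1}\otimes_{\cO}I$ coming from the orders of $I^{-1}$ and $I$ respectively both agree with the action of $I\ast\cO$ used on the target; everything else reduces to the local principal case, which is a one-line calculation.
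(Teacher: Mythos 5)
Your proposal is correct and follows essentially the same route as the paper. The paper identifies $I^{-1}\otimes_{\cO}I$ with the subset $\{x\in B:Ix\subseteq I\}=I\ast\cO$ of $B=B\otimes_\cO I$ by a terse appeal to tensor identities (using that $I$ is flat), whereas you spell out the same multiplication map explicitly and verify it is a bimodule isomorphism by localizing and using local principality of $I$; both proofs rest on the same underlying computation.
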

\begin{proof}
Since $I\ast\cO=\{x\in B\;:\;Ix\subseteq I\}=\{x\in B\;:\;xI^{-1}\subseteq I^{-1}\}$, both $I\ast\cO$ and $ I^{-1}\otimes_{\cO}I$ are $(I\ast\cO,I\ast\cO)$-bimodules with the obvious left and right $(I\ast\cO)$-action.
Moreover, by definition,
\[
I^{-1}\otimes_{\cO}I=\{x\in B\otimes_{\cO}I=B\;:\;Ix\subseteq \cO\otimes_{\cO}I=I\}=I\ast\cO,
\]
which proves the desired result.
\end{proof}

\begin{theorem}\cite[Theorem 2.3]{Rib}\label{1-1corrbim}
Let $\cM$ be a $(\cO,\cS)$-bimodule and let $\Lambda=\End_{\cO}^{\cS}(\cM)$. Then the
correspondence $ \dN\mapsto \Hom_{\cO}^{\cS}(\cM, \dN)$ induces a bijection between
$\Pic_\cO^\cS(\cM)$ and the set of isomorphism classes of locally free rank-1 right $\Lambda$-modules.
\end{theorem}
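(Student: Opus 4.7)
The plan is to establish a Morita-style correspondence by constructing mutually inverse maps between the two sets at the level of isomorphism classes and reducing all verifications to the local Morita equivalence provided by the local structure of the bimodules.

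First I would make precise the functor $\Psi(\dN) := \Hom_{\cO}^{\cS}(\cM, \dN)$: one checks that precomposition $(f\cdot \lambda)(x) := f(\lambda(x))$ makes $\Psi(\dN)$ a right $\Lambda$-module, and that $\Psi$ descends to isomorphism classes in the source and target (any isomorphism of bimodules induces a right $\Lambda$-module isomorphism). The key local computation is that if $\dN \in \Pic_\cO^\cS(\cM)$, then by definition $\dN_p \simeq \cM_p$ at every prime $p$; hence
\[
\Psi(\dN)_p \;=\; \Hom_{\cO_p}^{\cS_p}(\cM_p,\dN_p) \;\simeq\; \Hom_{\cO_p}^{\cS_p}(\cM_p,\cM_p) \;=\; \Lambda_p
\]
as a right $\Lambda_p$-module, proving that $\Psi(\dN)$ is locally free of rank $1$ over $\Lambda$.

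Next I would construct a candidate inverse $\Phi$: for a locally free rank-$1$ right $\Lambda$-module $P$, set $\Phi(P) := P \otimes_\Lambda \cM$, endowed with the left $\cO$- and right $\cS$-actions coming from $\cM$ (which commute with the $\Lambda$-action, hence descend to the tensor product). Localizing, $P_p \simeq \Lambda_p$ yields $\Phi(P)_p \simeq \Lambda_p \otimes_{\Lambda_p} \cM_p \simeq \cM_p$, so $\Phi(P) \in \Pic_{\cO}^{\cS}(\cM)$. Then one exhibits the natural adjunction maps
\[
\mathrm{ev}:\; \Psi(\dN) \otimes_\Lambda \cM \;\lra\; \dN,\qquad f\otimes m \longmapsto f(m),
\]
\[
\eta:\; P \;\lra\; \Hom_{\cO}^{\cS}(\cM,\; P\otimes_\Lambda \cM),\qquad p \longmapsto (m\mapsto p\otimes m),
\]
both of which are manifestly $(\cO,\cS)$-bimodule, respectively right $\Lambda$-module, homomorphisms.

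The crux is to show that $\mathrm{ev}$ and $\eta$ are isomorphisms, and for this I would work prime by prime. At each $p$, the local bimodule $\cM_p$ is one of the explicit models given by the local classification recalled in Section \ref{Admbim} (and underlying Proposition \ref{EndbiMod}), so $\cM_p$ is a faithful projective generator both as a left $\cO_p$-module and as a right $\cS_p$-module, and $\Lambda_p = \End_{\cO_p}^{\cS_p}(\cM_p)$ is precisely the opposite of the centralizer ring. This is the standard Morita setup: the local versions of $\mathrm{ev}$ and $\eta$ are identified, after choosing $\dN_p \simeq \cM_p$ and $P_p \simeq \Lambda_p$, with the identity on $\cM_p$ and on $\Lambda_p$ respectively. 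Globalizing via the local-to-global principle (valid because all sheaves involved are coherent and the maps are already defined globally) gives that $\mathrm{ev}$ and $\eta$ are isomorphisms. Hence $\Psi$ and $\Phi$ induce inverse bijections between $\Pic_{\cO}^{\cS}(\cM)$ and isomorphism classes of locally free rank-$1$ right $\Lambda$-modules.

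The main obstacle I anticipate is the verification of the local Morita equivalence: one must confirm, case by case using Ribet's local classification (split, ramified-maximal, and Eichler-at-$p$ cases), that $\cM_p$ is indeed a progenerator realizing an equivalence between right $\Lambda_p$-modules and $(\cO_p,\cS_p)$-bimodules locally isomorphic to $\cM_p$, and in particular that $\End_{\Lambda_p}(\cM_p) = \cO_p \otimes \cS_p^{\mathrm{op}}/(\text{admissibility relations})$. Once this is in place, the global statement is formal.
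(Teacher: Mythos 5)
The paper does not prove this statement: it is imported verbatim from Ribet's \cite[Theorem~2.3]{Rib}, and the text only records the explicit mutually inverse maps $\dN\mapsto\Hom_{\cO}^{\cS}(\cM,\dN)$ and $I\mapsto I\otimes_\Lambda\cM$. So there is no ``paper's own proof'' to compare against; you have supplied one.

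Your argument is correct and is the standard one. Both directions are checked by localization: since $\dN\in\Pic_{\cO}^{\cS}(\cM)$ means $\dN_p\simeq\cM_p$ for every prime $p$ and Hom/tensor commute with $\otimes_\Z\Z_p$ for finitely generated modules, the adjunction maps $\mathrm{ev}$ and $\eta$ become the canonical identifications $\Lambda_p\otimes_{\Lambda_p}\cM_p\simeq\cM_p$ and $\Lambda_p\simeq\End_{\cO_p}^{\cS_p}(\cM_p)$ after choosing $\dN_p\simeq\cM_p$, $P_p\simeq\Lambda_p$; a map of finitely generated $\Z$-modules that is an isomorphism at every prime is an isomorphism, so $\mathrm{ev}$ and $\eta$ are global isomorphisms, giving inverse bijections on isomorphism classes.

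One remark on your final paragraph: the ``main obstacle'' you anticipate is not really there. You do not need $\cM_p$ to be a progenerator, nor any case-by-case appeal to the local classification, nor an identification of $\End_{\Lambda_p}(\cM_p)$. The set you are parametrizing consists only of bimodules \emph{locally isomorphic to $\cM$}, and once $\dN_p\simeq\cM_p$ is granted, the local adjunctions are tautologically the identity maps you already described. So the second and third paragraphs of your write-up already constitute a complete proof, and the worry in the fourth paragraph can be dropped.
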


The one-to-one correspondence is given by:
\[
\begin{array}{ccc}
 \dN & \longrightarrow & I( \dN)=\Hom_{\cO}^{\cS}(\cM, \dN))\\
 \dN(I)=I\otimes_{\Lambda}\cM & \longleftarrow & I
\end{array}
\]

We can define an action of $\Pic(R)$ on $\CM_{\cO,\cS}^{\cM}(R)$ which generalizes the one on $\CM_{D,N}(R)$. Let $(\dN,\psi:R\hookrightarrow\Lambda)\in\CM_{\cO,\cS}^{\cM}(R)$ where $\dN\in\Pic_{\cO}^{\cS}(\cM)$ and $\Lambda=\End_{\cO}^{\cS}(\dN)$. Pick a representant $J$ of a class $[J]\in\Pic(R)$. Then $\psi(J^{-1})\Lambda$ is a locally free rank-1 right $\Lambda$-module.
Write $[J]\ast \dN:=\psi(J^{-1})\Lambda\otimes_{\Lambda}\dN\in\Pic_{\cO}^{\cS}(\cM)$ for the element in $\Pic_\cO^\cS(\cM)$ corresponding to it by the correspondence of  Theorem \ref{1-1corrbim}. Note that this construction does not depend on the representant $J$, since $[J]\ast \dN$ only depends on the isomorphism class of the rank-1 right $\Lambda$-module $\psi(J^{-1})\Lambda$.
Since $R$ acts on $\psi(J^{-1})\Lambda$, there is an action of $R$ on $\psi(J^{-1})\Lambda\otimes_{\Lambda}\dN$ which commutes with the actions of both $\cO$ and $\cS$.
This yields a natural embedding $[J]\ast\psi:R\hookrightarrow\End_\cO^\cS([J]\ast \dN)$, which is optimal because
\[\{x\in K: \;\psi(x)(\psi(J^{-1})\Lambda\otimes_{\Lambda}\cM)\subseteq\psi(J^{-1})\Lambda\otimes_{\Lambda}\dN\}=R,\]
and does not depend on the representant $J$ of $[J]$. Hence, it defines an action of $[J]\in\Pic(R)$ on $(\dN,\psi)\in\CM_{\cO,\cS}^\cM(R)$. Namely $[J]\ast(\dN,\psi)=([J]\ast \dN,[J]\ast\psi)\in\CM_{\cO,\cS}^\cM(R)$.

Given that both sets $\CM_{D,N}(R)$ and $\CM_{\cO,\cS}^{\cM}(R)$ are equipped with an action of $\Pic(R)$, it seems reasonable to ask about the behaviour of the action of $\Pic(R)$ under the map $\phi:\CM_{D,N}(R)\ra\coprod_\cM\CM_{\cO,\cS}^{\cM}(R)$ of \eqref{Phi}. This is the aim of the rest of this subsection.

Recall that, $B$ being indefinite, the orders $\cO$ and $I\ast\cO$ are isomorphic for any locally free left $\cO$-module $I$ of rank 1.
\begin{lemma}\label{lemdifbimod}
Let $\cM$ be a $(\cO,\cS)$-bimodule and let $I$ be a locally free left-$\cO$-module of rank 1. Then $I^{-1}\otimes_{\cO }\cM$ admits a structure of $(I\ast\cO,\cS)$-bimodule and the isomorphism $\cO\cong I\ast\cO$ identifies the $(\cO,\cS)$-bimodule $\cM$ with the $(I\ast\cO,\cS)$-bimodule $I^{-1}\otimes_{\cO }\cM$.
%Therefore the algebra $\End_{\cO}^{\cS}(\cM)$ is canonically isomorphic to $\End_{J\ast\cO}^{\cS}(J^{-1}\otimes_{\cO }\cM)$.
%Let $\cM$ be a $(\cO,\cS)$-bimodule, let $x\in B=\cO\otimes\Q$. It is clear that $xM$ is a $(x\cO x^{-1},\cS)$-bimodule. Then $\End(\cM)$ as $(\cO,\cS)$-bimodule is canonically isomorphic to $\End(xM)$ as $(x\cO x^{-1} ,\cS)$-bimodule.
\end{lemma}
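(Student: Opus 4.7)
The plan is to reduce everything to an explicit choice of generator for $I$, which is possible since $B$ is indefinite (so that $\Pic(D,N)$ is trivial and any locally free rank-one left $\cO$-module is in fact principal). Before doing so, I would first verify the bimodule structure abstractly: from the definition $I\ast\cO = \{x\in B : Ix\subseteq I\}$, one checks that if $y\in I\ast\cO$ and $x\in I^{-1}$, then $I(yx)=(Iy)x\subseteq Ix\subseteq\cO$, so $yx\in I^{-1}$. This makes $I^{-1}$ naturally a left $(I\ast\cO)$-module compatible with its right $\cO$-module structure, i.e.\ an $(I\ast\cO,\cO)$-bimodule. Consequently $I^{-1}\otimes_{\cO}\cM$ inherits a left $(I\ast\cO)$-action and a right $\cS$-action, and these commute because the $\cO$-action on $\cM$ commutes with the $\cS$-action. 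This gives the desired $(I\ast\cO,\cS)$-bimodule structure.

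Next I would make the identification explicit. Using indefiniteness, write $I=\cO\alpha$ for some $\alpha\in B^\times$. A direct computation then gives $I\ast\cO=\alpha^{-1}\cO\alpha$ and $I^{-1}=\alpha^{-1}\cO$, and the canonical isomorphism $\chi:\cO\stackrel{\sim}{\to}I\ast\cO$ sends $z\mapsto\alpha^{-1}z\alpha$. I would then define
\[
\Psi\colon I^{-1}\otimes_{\cO}\cM\lra\cM,\qquad \alpha^{-1}x\otimes m\longmapsto x\cdot m,
\]
and observe that this is well-defined and bijective because $\alpha^{-1}\cO\cong\cO$ as right $\cO$-modules, so $\alpha^{-1}\cO\otimes_{\cO}\cM\cong\cM$ in the category of right $\cS$-modules.

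Finally, I would check equivariance of $\Psi$ for the left actions. For $y=\alpha^{-1}z\alpha\in I\ast\cO$,
\[
\Psi\bigl(y\cdot(\alpha^{-1}x\otimes m)\bigr)=\Psi(\alpha^{-1}zx\otimes m)=zx\cdot m=\chi^{-1}(y)\cdot\Psi(\alpha^{-1}x\otimes m),
\]
which says precisely that $\Psi$ identifies $I^{-1}\otimes_{\cO}\cM$, as an $(I\ast\cO,\cS)$-bimodule, with $\cM$ viewed as an $(I\ast\cO,\cS)$-bimodule via $\chi$. One should also verify that the resulting isomorphism class is independent of the choice of generator $\alpha$: replacing $\alpha$ by $u\alpha$ with $u\in\cO^\times$ only alters $\Psi$ by an inner automorphism coming from $u$, hence yields an isomorphic identification.

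The only real subtlety, and thus the main step to be careful about, is the bookkeeping of left versus right actions and the distinction between $\cO$ and its conjugate $I\ast\cO$; the appeal to indefiniteness is what makes $I=\cO\alpha$ globally principal and lets the diagrammatic argument reduce to straightforward conjugation by $\alpha$.
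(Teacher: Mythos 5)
Your proof is correct and follows essentially the same route as the paper: both use the indefiniteness of $B$ to write $I=\cO\alpha$ as a principal ideal and then define the identification $\alpha^{-1}x\otimes m\mapsto xm$, checking compatibility with the conjugation isomorphism $\cO\cong I\ast\cO$. Your additional remarks on the abstract bimodule structure of $I^{-1}$ and on independence of the choice of generator are sound but not essential; the paper leaves them implicit.
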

\begin{proof}
Since $B$ is indefinite, $\Pic(D,N)=1$ and $I$ must be principal. Write $I=\cO\gamma$. Then $I\ast\cO=\gamma^{-1}\cO\gamma$ and the isomorphism $I\ast\cO\cong\cO$ is given by $\gamma^{-1}\alpha\gamma\mapsto\alpha$. Finally, the isomorphism of $\Z$-modules
\[
\begin{array}{ccc}
I^{-1}\otimes_{\cO }\cM & \longrightarrow & \cM\\
 \gamma^{-1}\beta\otimes m &\longmapsto & \beta m
\end{array}
\]
is compatible with the isomorphism $I\ast\cO\cong\cO$ described above.
\end{proof}

\begin{theorem}\label{actPic}
The map $\phi:\CM_{D,N}(R)\longrightarrow \bigsqcup_\cM\CM_{\cO,\cS}^{\cM}(R)$ satisfies the reciprocity law
\[\phi([J]\ast\varphi)=[J]^{-1}\ast\phi(\varphi),\]
for any $\varphi:R\hookrightarrow\cO$ in $\CM_{D,N}(R)$ and any $[J]\in\Pic(R)$.
\end{theorem}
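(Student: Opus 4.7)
The plan is to compute both sides of the claimed identity explicitly, using Theorem \ref{CMbimod} to describe each, and then match them via Lemma \ref{lemdifbimod}. Let $\varphi = \varphi_P : R \hookrightarrow \cO$, put $I = \cO\varphi(J)$, and set $\cO' = I * \cO$, so that $[J]*\varphi : R \hookrightarrow \cO'$ is the same map $\varphi$ viewed as taking values in $\cO' \subseteq B$.

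By Theorem \ref{CMbimod}, $\phi(\varphi) = (\cO \otimes_R \cS,\, \phi_P)$ with $\phi_P(\delta)(\alpha \otimes s) = \alpha\varphi(\delta) \otimes s$, and similarly $\phi([J]*\varphi) = (\cO' \otimes_R \cS,\, \phi_{P'})$ with $\phi_{P'}(\delta)(\alpha \otimes s) = \alpha\varphi(\delta) \otimes s$ (using that $[J]*\varphi = \varphi$ in $B$). To compare, I would transport the $(\cO',\cS)$-bimodule $\cO' \otimes_R \cS$ to an $(\cO,\cS)$-bimodule via the functor $I \otimes_{\cO'} -$, which inverts the equivalence of Lemma \ref{lemdifbimod}. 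This yields $I \otimes_{\cO'}(\cO' \otimes_R \cS) = I \otimes_R \cS$, on which the transported embedding acts as $\beta \otimes s \mapsto \beta\varphi(\delta) \otimes s$; a local check using Lemma \ref{auxactPic} confirms that the right $R$-action on $\cO'$ (via $[J]*\varphi$) is transported to the right $R$-action on $I$ (via $\varphi$), since $[J]*\varphi$ and $\varphi$ coincide as maps into $B$.

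For the right-hand side, the definition of the $\Pic(R)$-action gives $[J]^{-1} * \cM_{\widetilde P} = \phi_P(J)\Lambda \otimes_\Lambda \cM_{\widetilde P}$, where $\Lambda = \End^{\cS}_\cO(\cM_{\widetilde P})$. The natural evaluation map onto $\phi_P(J)\cdot \cM_{\widetilde P} \subseteq \cM_{\widetilde P}$ is an isomorphism of $(\cO, \cS)$-bimodules (verified locally, since $\phi_P(J)\Lambda$ is locally principal). Plugging in the explicit formula $\phi_P(\delta)(\alpha \otimes s) = \alpha\varphi(\delta) \otimes s$ from Theorem \ref{CMbimod}(b) and using that $\cO$ is projective as a right $R$-module (from the decomposition $\cO \simeq R \oplus eI_0$ recalled in \S\ref{ModCM}), one identifies this image with $\cO\varphi(J) \otimes_R \cS = I \otimes_R \cS$, and the induced embedding $[J]^{-1}*\phi_P$ again acts by $\beta \otimes s \mapsto \beta\varphi(\delta) \otimes s$.

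Both sides therefore produce the same element $(I \otimes_R \cS,\, \delta \mapsto \text{right multiplication by } \varphi(\delta))$ of $\CM^{\cM}_{\cO, \cS}(R)$, which establishes the reciprocity law. The main obstacle I expect is the bookkeeping in the middle step: precisely tracking the right $R$-module structures under the equivalence of Lemma \ref{lemdifbimod} (which requires the local formula $I^{-1} \otimes_\cO I \cong \cO'$ of Lemma \ref{auxactPic}) and verifying that the various surjections between locally free modules of matching rank are in fact isomorphisms. Once these checks are in place, the identity is forced by the rigidity of Theorem \ref{CMbimod}.
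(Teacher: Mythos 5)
Your argument is correct and follows the same path as the paper's own proof: both $\phi([J]\ast\varphi)$ and $[J]^{-1}\ast\phi(\varphi)$ are identified with the $(\cO,\cS)$-bimodule $\cO\varphi(J)\otimes_R\cS$ equipped with right multiplication by $\varphi(R)$, using Theorem \ref{CMbimod} together with Lemmas \ref{auxactPic} and \ref{lemdifbimod}. The only cosmetic difference is that you transport bimodules via $I\otimes_{\cO'}(-)$ (the inverse of the paper's $I^{-1}\otimes_{\cO}(-)$) and you make explicit the local verifications of the intermediate isomorphisms that the paper leaves implicit.
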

\begin{proof}
The map $\phi$ is given by
\[
\begin{array}{rclc}
\phi: & \CM_{D,N}(R) & \longrightarrow & \bigsqcup_\cM\CM_{\cO,\cS}^{\cM}(R)\\
 & (\varphi:R\hookrightarrow\cO) & \longmapsto & (\cO\otimes_R\cS,\phi_\varphi:R\hookrightarrow\End_{\cO}^{\cS}(\cO\otimes_R\cS))
\end{array}
\]
Let $\varphi:R\hookrightarrow\cO$ denote the conjugacy class of an embedding in $\CM_{D,N}(R)$ and let $[J]\in\Pic(R)$. Write $[J]\ast\varphi:R\hookrightarrow [J]\ast\cO$ for the embedding induced by the action of $[J]$ on $\varphi$.
By Lemma \ref{auxactPic},
\[
([J]\ast\cO)\otimes_R\cS=(\varphi(J^{-1})\cO\otimes_{\cO}\cO\varphi(J))\otimes_{R}\cS.
\]
%$\End((\varphi(J^{-1})\cO\otimes_{\cO}\cO\varphi(J))\otimes_{R}\cS)$ understood as the ring of $([J]\ast\cO,\cS)$-bimodule endomorphisms.
Then Lemma \ref{lemdifbimod} asserts that, under the isomorphism $[J]\ast\cO\cong \cO$, the $([J]\ast\cO,\cS)$-bimodule $(\varphi(J^{-1})\cO\otimes_{\cO}\cO\varphi(J))\otimes_{R}\cS$ is naturally identified with the $(\cO,\cS)$-bimodule $\cO\varphi(J)\otimes_{R}\cS$.
Moreover, the embedding $\phi_{[J]\ast\varphi}$ is given by
\[
\begin{array}{ccc}
R & \hookrightarrow & \End_{\cO}^{\cS}(\cO\varphi(J)\otimes_{R}\cS)\\
\delta & \longmapsto & (\alpha\varphi(j)\otimes s\mapsto\alpha\varphi(j\delta)\otimes s).
\end{array}
\]
Setting $\Lambda=\End_{\cO}^{\cS}(\cO\otimes_R\cS)$, we easily obtain that
\[
\cO\varphi(J)\otimes_{R}\cS=\phi(\varphi)(J)\Lambda\otimes_{\Lambda}(\cO\otimes_R\cS)=[J]^{-1}\ast(\cO\otimes_R\cS).
\]
Finally, since the action of $\phi_{[J]\ast\varphi}(R)$ on $[J]^{-1}\ast(\cO\otimes_R\cS)$ is given by the natural action of $R$ on $J$, we conclude that $\phi([J]\ast\varphi)=[J]^{-1}\ast\phi(\varphi)$.

\end{proof}

Let $\cM$ be an admissible $(\cO,\cS)$-bimodule of rank 8. By Theorem \ref{1-1bimodref}, the map $\cM\longmapsto\End_{\cO}^{\cS}(\cM)$ induces a one-to-one correspondence between the sets $\Pic_\cO^\cS(\cM)$ and $\Pic(D_0^\cM,NN_0^\cM)$, where $D_0^\cM$ and $N_0^\cM$ were already defined in \S\ref{RibWork}.
This implies that the set $\CM_{\cO,\cS}^\cM(R)$ can be identified with $\CM_{D_0^\cM,N_0^\cM N}(R)$, under the above correspondence.

Both sets are endowed with an action of the group $\Pic(R)$. We claim that the bijection
\begin{equation}\label{admCM}
\CM_{\cO,\cS}^\cM(R)\simeq\CM_{D_0^\cM,N_0^\cM N}(R)
\end{equation}
is equivariant under this action.

Indeed for any $[J]\in\Pic(R)$ and any $(\dN,\psi:R\hookrightarrow\End_\cO^\cS(\dN))$ in $\CM_{\cO,\cS}^\cM(R)$, Theorem \ref{1-1corrbim} asserts that $\Hom_\cO^\cS(\dN,[J]\ast \dN)=\psi(J^{-1})\End_\cO^\cS(\dN)$. Therefore
\[
\End_{\cO}^{\cS}([J]\ast \dN)=\{\rho\in\End_B^H(\dN^0):\;\rho\Hom_{\cO}^{\cS}(\dN,[J]\ast \dN)\subseteq\Hom_{\cO}^{\cS}(\dN,[J]\ast \dN)\}=[J]\ast\End_{\cO}^{\cS}(\dN),
\]
which is the left Eichler order of $\psi(J^{-1})\End_\cO^\cS(\dN)$. Moreover, $[J]\ast\psi$ arises from the action of $R$ on $\psi(J^{-1})\End_\cO^\cS(\dN)$ via $\psi$. In conclusion, both actions coincide.

\begin{corollary}\label{CorPic}
Assume that for all $P\in\CM(R)$ the bimodules $\cM_{\widetilde P}$ are admissible.
Then the map $\phi:\CM_{D,N}(R)  \ra  \bigsqcup_\cM\CM_{D_0^\cM,NN_0^\cM}(R)$, satisfies $\phi([J]\ast\varphi)=[J]^{-1}\ast\phi(\varphi)$,
for any $\varphi:R\hookrightarrow\cO$ in $\CM_{D,N}(R)$ and any $[J]\in\Pic(R)$.
\end{corollary}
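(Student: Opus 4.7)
The plan is to obtain this corollary essentially by composing the conclusion of Theorem~\ref{actPic} with the $\Pic(R)$-equivariant identification \eqref{admCM} justified just above the corollary's statement, using the admissibility hypothesis to invoke Theorem~\ref{1-1bimodref}.

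More concretely, I would first observe that Theorem~\ref{actPic} already gives the desired reciprocity
\[
\phi([J]\ast\varphi)=[J]^{-1}\ast\phi(\varphi)
\]
for the version of $\phi$ whose target is $\bigsqcup_\cM\CM_{\cO,\cS}^{\cM}(R)$. So the corollary amounts to transporting this relation along the identification of targets. Under the admissibility hypothesis, every bimodule $\cM_{\widetilde P}$ that occurs in the image of $\phi$ is admissible, hence Theorem~\ref{1-1bimodref} applies to each such $\cM$: the map $\dN\mapsto\End_{\cO}^{\cS}(\dN)$ induces a bijection $\Pic_{\cO}^{\cS}(\cM)\simeq\Pic(D_0^\cM,NN_0^\cM)$, and pairing $(\dN,\psi)\mapsto(\End_\cO^\cS(\dN),\psi)$ gives the bijection \eqref{admCM}, namely $\CM_{\cO,\cS}^\cM(R)\simeq\CM_{D_0^\cM,N_0^\cM N}(R)$.

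Next I would verify that this bijection is $\Pic(R)$-equivariant. This is exactly what is argued in the paragraph preceding the corollary: for $(\dN,\psi)$ and $[J]\in\Pic(R)$, Theorem~\ref{1-1corrbim} identifies $\Hom_\cO^\cS(\dN,[J]\ast\dN)$ with $\psi(J^{-1})\End_\cO^\cS(\dN)$, so $\End_\cO^\cS([J]\ast\dN)$ is the left order of $\psi(J^{-1})\End_\cO^\cS(\dN)$, i.e.\ the image of $\End_\cO^\cS(\dN)$ under the $\Pic(R)$-action on $\CM_{D_0^\cM,N_0^\cM N}(R)$ defined via the embedding $\psi$; and the new embedding $[J]\ast\psi$ is, by construction, induced by the natural $R$-action on $\psi(J^{-1})\End_\cO^\cS(\dN)$, which matches the action described in \S\ref{QOOE} on optimal embeddings.

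Finally, combining these two equivariant facts is immediate: the composition
\[
\CM_{D,N}(R)\xrightarrow{\phi}\bigsqcup_\cM\CM_{\cO,\cS}^\cM(R)\xrightarrow{\sim}\bigsqcup_\cM\CM_{D_0^\cM,NN_0^\cM}(R)
\]
is the map $\phi$ of the corollary, and the first arrow intertwines the action of $[J]$ on the source with the action of $[J]^{-1}$ on the middle by Theorem~\ref{actPic}, while the second arrow is $\Pic(R)$-equivariant by the previous paragraph; hence the composition satisfies $\phi([J]\ast\varphi)=[J]^{-1}\ast\phi(\varphi)$. There is no real obstacle here: the only subtlety is making sure the two notions of $\Pic(R)$-action (on bimodule classes via $\psi(J^{-1})\Lambda$, and on optimal embeddings via the $\ast$-operation of \S\ref{QOOE}) are literally the same under the correspondence of Theorem~\ref{1-1bimodref}, and this has already been explicitly checked in the text.
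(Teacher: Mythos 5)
Your proposal is correct and matches the paper's (implicit) argument: the corollary is stated immediately after the paragraph establishing that the bijection \eqref{admCM} is $\Pic(R)$-equivariant, precisely so that it follows by composing that identification with Theorem~\ref{actPic}, as you do. No discrepancies.
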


\subsection{Atkin-Lehner involutions}\label{ALss}

Recall from \S\ref{QOOE} that the set of optimal embeddings $\CM_{D,N}(R)$ is also equipped with an action of the group $W(D,N)$ of Atkin-Lehner involutions. Let $q\mid DN,\;q\neq p$ be a prime and let $n\geq 1$ be such that $q^n\parallel DN$.
For any $(\cO,\cS)$-bimodule $\cM$, there is a natural action of $w_{q^n}$ on $\CM_{\cO,\cS}^\cM(R)$ as well, as we now describe.

Let $\dQ_\cO$ be the single two-sided $\cO$-ideal of norm $q^n$. Let $(\dN,\psi:R\hookrightarrow \End_\cO^\cS(\dN))\in\CM_{\cO,\cS}^\cM(R)$.
Since $\dQ_\cO$ is two-sided, $\dQ_\cO\otimes_\cO \dN$ acquires a natural structure of $(\cO,\cS)$-bimodule.

%Moreover, due to the fact that $R$ acts on $\cM$ via $\psi$, there is a natural action of $R$ on $\dQ_\cO\otimes_\cO \cM$ commuting with both actions of $\cO$ and $\cS$.
%This gives rise to an embedding $$w_{q^n}(\psi):R\hookrightarrow\End_\cO^\cS(\dQ_\cO\otimes_\cO \cM).$$
Recall that $\cO$ equals $\dQ_\cO\ast\cO$ as orders in $B$. Hence the algebra $\End_\cO^\cS(\dQ_\cO\otimes_\cO \dN)$ is isomorphic to $\End_{\dQ_\cO\ast\cO}^\cS(\dQ_\cO\otimes_\cO \dN)=\End_\cO^\cS(\dN)$ by Lemma \ref{lemdifbimod}.

Moreover, the bimodule $\dQ_\cO\otimes_\cO \dN$ is locally isomorphic to $\dN$ at all places of $\Q$ except possibly for $q$. Since we assumed $q\neq p$, there is a single isomorphism class locally at $q$. Hence we deduce that $\dQ_\cO\otimes_\cO \dN\in\Pic_\cO^\cS(\cM)$. Thus $w_{q^n}$ defines an involution on $\CM_{\cO,\cS}^\cM(R)$ by the rule $w_{q^n}(\dN,\psi)=(\dQ_\cO\otimes_\cO \dN,\psi)$.

%Assuming $\cM=\cO\otimes_R\cS$, w
We now proceed to describe the behavior of the Atkin-Lehner involution $w_{q^n}$ under the map $$\phi:\CM_{D,N}(R)  \ra  \bigsqcup_\cM\CM_{\cO,\cS}^\cM(R)$$ introduced above.

%We also denote by $\dP_\cO$ the corresponding left ideal into the oriented order $\cO$. The embedding $w_{p^n}(\varphi):R\hookrightarrow \dP_\cO\ast\cO$ is obtained considering the natural immersion of $R$ into $\dP_\cO\ast\cO$, the oriented right order of the left ideal $\dP_\cO$, given by $\varphi$.

\begin{theorem}\label{actA_L}
For all $\varphi:R\hookrightarrow\cO$ in $\CM_{D,N}(R)$,
\[\phi(w_{q^n}(\varphi))=w_{q^n}(\phi(\varphi)).\]
\end{theorem}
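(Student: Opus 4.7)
The plan is to directly construct an isomorphism of $(\cO,\cS)$-bimodules that intertwines the two optimal embeddings, and then invoke Lemma \ref{lemdifbimod} to reconcile the orientations. Unpacking definitions: on the one hand $\phi(w_{q^n}(\varphi)) = ((\dP_\cO\ast\cO)\otimes_R\cS,\phi_{w_{q^n}(\varphi)})$, where the right $R$-structure on $\dP_\cO\ast\cO$ uses $w_{q^n}(\varphi)=\varphi$ (the same ring map, valid since $\dP_\cO\ast\cO=\cO$ as orders in $B$) and $\phi_{w_{q^n}(\varphi)}(\delta)$ acts by $\beta\otimes s\mapsto\beta\varphi(\delta)\otimes s$; on the other, $w_{q^n}(\phi(\varphi))=(\dP_\cO\otimes_\cO(\cO\otimes_R\cS),\phi_\varphi)$, which after the canonical identification $\dP_\cO\otimes_\cO(\cO\otimes_R\cS)=\dP_\cO\otimes_R\cS$ acquires the embedding $\delta\mapsto(p\otimes s\mapsto p\varphi(\delta)\otimes s)$.

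Writing $\dP_\cO=\cO\gamma$ so that $\dP_\cO\ast\cO=\gamma^{-1}\cO\gamma$ and the natural identification supplied by Lemma \ref{lemdifbimod} is $\iota:\dP_\cO\ast\cO\stackrel{\sim}{\to}\cO$, $\gamma^{-1}\alpha\gamma\mapsto\alpha$, I would define
\[
f:(\dP_\cO\ast\cO)\otimes_R\cS\longrightarrow\dP_\cO\otimes_R\cS,\quad \beta\otimes s\longmapsto\gamma\beta\otimes s.
\]
A routine verification shows that $f$ is a well-defined $\Z$-bijection (both the well-definedness over $R$ and the inverse $p\otimes s\mapsto\gamma^{-1}p\otimes s$ use only that $\dP_\cO=\gamma\cdot(\dP_\cO\ast\cO)$ and that the right $R$-structures on both sides factor through $\varphi$), is $\cS$-linear on the right, and intertwines the left $(\dP_\cO\ast\cO)$-action with the left $\cO$-action via $\iota^{-1}$: for $\alpha\in\cO$ and $\beta\in\dP_\cO\ast\cO$, $f(\iota^{-1}(\alpha)\beta\otimes s)=\gamma(\gamma^{-1}\alpha\gamma)\beta\otimes s=\alpha f(\beta\otimes s)$. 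Thus $f$ is an isomorphism of $(\cO,\cS)$-bimodules, placing both sides of the theorem in the same local isomorphism class $\cM\in\Pic_\cO^\cS(\cM)$.

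Finally, transporting $\phi_{w_{q^n}(\varphi)}$ through $f$: for $p=\gamma\beta\in\dP_\cO$, one computes $f\circ\phi_{w_{q^n}(\varphi)}(\delta)\circ f^{-1}(p\otimes s)=f(\beta\varphi(\delta)\otimes s)=p\varphi(\delta)\otimes s$, which is precisely the extended $\phi_\varphi$ defining $w_{q^n}(\phi(\varphi))$. Hence the two pairs coincide in $\CM_{\cO,\cS}^\cM(R)$ up to $\End_\cO^\cS(\dP_\cO\otimes_R\cS)^\times$-conjugation, proving the theorem. The main obstacle is the careful reconciliation of orientations: while $\dP_\cO\ast\cO$ and $\cO$ agree as underlying orders in $B$, they have opposite local orientations at $q$, and one must verify (via Lemma \ref{lemdifbimod}, which is exactly where the indefiniteness of $B$ enters) that $\iota$ respects these orientations, so that the comparison in $\CM_{\cO,\cS}^\cM(R)$ is legitimate; with this in place the remainder is a direct bookkeeping calculation.
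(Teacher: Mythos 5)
Your proof is correct and follows essentially the same route as the paper's: unpack both sides, identify the $(\dQ_\cO\ast\cO,\cS)$-bimodule $(\dQ_\cO\ast\cO)\otimes_R\cS$ with the $(\cO,\cS)$-bimodule $\dQ_\cO\otimes_R\cS=\dQ_\cO\otimes_\cO(\cO\otimes_R\cS)$ under the isomorphism $\dQ_\cO\ast\cO\cong\cO$, and observe that the $R$-embedding comes along for the ride. The only difference is cosmetic: the paper factors the identification through Lemma \ref{auxactPic} (writing $\dQ_\cO\ast\cO=\dQ_\cO^{-1}\otimes_\cO\dQ_\cO$) before invoking Lemma \ref{lemdifbimod}, while you skip that intermediate step and directly write down the explicit map $\beta\otimes s\mapsto\gamma\beta\otimes s$ — which is precisely the isomorphism constructed inside the proof of Lemma \ref{lemdifbimod} — and then verify by hand that it intertwines the two optimal embeddings (a check the paper leaves implicit). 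One small imprecision in your closing paragraph: the worry is not that $\iota$ ``respects'' the orientations of $\dQ_\cO\ast\cO$ and $\cO$ as abstract oriented orders — it does, and must, since $\Pic(D,N)=1$; the point of $w_{q^n}$ being nontrivial is that $\iota$ is conjugation by $\gamma\notin\cO^\times$, so $\iota\circ\varphi$ is a genuinely different class in $\CM_\cO(R)$. This does not affect the argument, and the computation you carry out handles it correctly.
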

\begin{proof}
For any $\varphi\in\CM_{D,N}(R)$, let $w_{q^n}(\varphi):R\hookrightarrow \dQ_\cO\ast\cO$ as in \S\ref{QOOE}. Set $\phi(\varphi)=(\cO\otimes_R\cS,\phi_\varphi)$.

As ring monomorphisms, $\varphi$ equals $w_{q^n}(\varphi)$. Hence, $\phi(w_{q^n}(\varphi))=((\dQ_\cO\ast\cO)\otimes_R\cS,\phi_\varphi)$. Applying Lemma \ref{auxactPic}, we obtain that
\[
(\dQ_\cO\ast\cO)\otimes_R\cS=(\dQ_\cO^{-1}\otimes_{\cO}\dQ_\cO)\otimes_{R}\cS.
\]
By Lemma \ref{lemdifbimod}, the $(\dQ_\cO\ast\cO,\cS)$-bimodule $(\dQ_\cO^{-1}\otimes_{\cO}\dQ_\cO)\otimes_{R}\cS)$ corresponds to the $(\cO,\cS)$-bimodule $\dQ_\cO\otimes_{R}\cS=\dQ_\cO\otimes_\cO(\cO\otimes_{R}\cS)$. Thus we conclude that
$\phi(w_{q^n}(\varphi))=(\dQ_\cO\otimes_\cO(\cO\otimes_{R}\cS),\phi_\varphi)=w_{q^n}(\phi(\varphi))$.
\end{proof}
%and, since the embedding $w_{p^n}(\varphi)$ is obtained conjugating by $\pi$, for all $\delta\in R$
%$$(\phi(w_{p^n}(\varphi))(\delta)\circ\Pi)(\alpha\otimes s)=\phi(w_p(\varphi))(\delta)(\alpha\pi\otimes s)=\alpha\delta\pi\otimes s=(\Pi\circ\phi(\varphi)(\delta))(\alpha\otimes s).$$
%We conclude that $\phi(w_{p^n}(\varphi))$ is obtained from $\phi(\varphi)$ conjugating by a generator of $\dP_\Lambda$.

\begin{remark}\label{A_L_remark}
We defined an action of $w_{q^n}$ on $\CM_{\cO,\cS}^\cM(R)$ for any $(\cO,\cS)$-bimodule $\cM$. Returning to the situation where $\cM$ is admissible, one can ask if, through the correspondence $\CM_{\cO,\cS}^\cM(R)\leftrightarrow\CM_{D_0^\cM,N_0^\cM N}(R)$ of \eqref{admCM}, this action agrees with the Atkin-Lehner action $w_{q^n}$ on $\CM_{D_0^\cM,N_0^\cM N}(R)$.

Indeed, let $(\dN,\psi)\in\CM_{\cO,\cS}^\cM(R)$ and set $\Lambda=\End_\cO^\cS(\dN)\in\Pic(D_0^\cM,N_0^\cM N)$. Since $w_{q^n}(\dN,\psi)=(\dQ_\cO\otimes_\cO \dN,\psi)$, we only have to check that $\dQ_\Lambda:=\Hom_\cO^\cS(\dN,\dQ_\cO\otimes_\cO \dN)$ is a locally free rank-1 two-sided $\Lambda$-module of norm $q^n$ in order to ensure that
$\End_\cO^\cS(\dQ_\cO\otimes_\cO \dN)=\dQ_\Lambda\ast\Lambda$.

Since $\dQ_\Lambda$ is naturally a right $\End_\cO^\cS(\dQ_\cO\otimes_\cO \dN)$-module and $\Lambda$ equals $\End_\cO^\cS(\dQ_\cO\otimes_\cO \dN)$ as orders in $\End_B^H(\dN^0)$, we conclude that $\dQ_\Lambda$ is two-sided. In order to check that $\dQ_\Lambda$ has norm $q^n$, note that
$\dQ_\Lambda$ coincides with $\Hom_\cO^\cS(\dQ_\cO\otimes_\cO \dN,(\dQ_\cO\otimes_\cO\dQ_\cO)\otimes_\cO \dN)$ as ideals on the ring $\Lambda$. Hence
\[
\dQ_\Lambda^2= \Hom_\cO^\cS(\dQ_\cO\otimes_\cO \dN,q^n \dN)\cdot\Hom_\cO^\cS( \dN,\dQ_\cO\otimes_\cO \dN)=\Hom_\cO^\cS( \dN,q^n \dN)=q^n\Lambda.
\]
\end{remark}

\section{\v{C}erednik-Drinfeld's special fiber}\label{CD}

In this section we exploit the results of \S\ref{ssesp} to describe the specialization of Heegner points on Shimura curves $X_0(D,N)$ at primes $p\mid D$. In order to do so, we first recall basic facts about the moduli interpretation of \v{C}erednik-Drinfeld's special fiber of $X_0(D,N)$ at $p$. %Our main reference will be Ribet's description \cite{Rib}, using classical results of \v{C}erednik \cite{Cer} and Drinfeld \cite{Drin}.

Let $p$ be a prime dividing $D$, fix $\F$ an algebraic closure of $\F_p$ and let $\tilde X_0(D,N)=\cX_0(D,N)\times\Spec(\F_p)$. By the work of \v{C}erednik and Drinfeld, all irreducible components of $\tilde X_0(D,N)$ are reduced smooth conics, meeting transversally at double ordinary points.

Points in $\tilde X_0(D,N)(\F)$ parameterize abelian surfaces $(\widetilde A,\widetilde i)$ over $\F$ with QM by $\cO$ such that $\mbox{Trace}_{\F }(\widetilde i(\alpha)\mid \mbox{Lie}(\widetilde A))=\Trace(\alpha)\in \Q$ for all $\alpha\in \cO$. Here $\Trace$ stands for the reduced trace on $\cO$.

%Observe that when $a(\widetilde A)=2$, we have that $\widetilde A=E^2$ with $\End_\F(E)=\cS\in\Pic(p,1)$. Therefore, as above, to give a pair $(\widetilde A,\widetilde i)$ is equivalent to give $\cM$, a $(\cO,\cS)$-bimodule of rank 8.

It follows from \cite[Lemma 4.1]{Rib} that any abelian surface with QM by $\cO$ in characteristic $p\mid D$ is supersingular.
Moreover, singular points of $\tilde X_0(D,N)$ correspond to abelian surfaces $(\widetilde A,\widetilde i)$ with QM by $\cO$ such that $a(\widetilde A)=2$ and their corresponding bimodule is admissible at $p$, of type $(1,1)$ (c.f \cite[Section 4]{Rib}). Observe that $p$ is the single place at which both $\cO$ and $\cS$ ramify. Hence for such bimodules $\cM$ we have $\Sigma=\{p\}$, $D_0^\cM=D/p$ and $N_0^\cM=p$ in the notation of \S\ref{Admbim}.

\begin{remark}\label{remtrp-pair}
In Ribet's original paper \cite{Rib}, singular points are characterized as triples $[A_0,i_0,C]$ where $(A_0,i_0)$ is an abelian surface with QM by a maximal order and $C$ is a $\Gamma_0(N)$-structure. According to Appendix A, we can construct from such a triple a pair $(A,i)$ with QM by $\cO$ such that $\End(A,i)=\End(A_0,i_0,C)$.
\end{remark}

Let $P=[A,i]\in\CM(R)$ be a Heegner point and let $\varphi_P\in\CM_{D,N}(R)$ be the optimal embedding attached to $P$. It is well know that if such an optimal embedding exists, $R$ is maximal at $p$ and $p$ either ramifies or is inert in $K$ (cf. \cite[Theorem 3.1]{Vig}). Thus, $[\widetilde A,\widetilde i]$ is supersingular in $\tilde X_0(D,N)$ and the bimodule associated to $\widetilde P$ is $\cM_{\widetilde P}=\cO\otimes_R\cS$, by Theorem \ref{CMbimod}. Moreover, by Remark \ref{remlemaopt} the embedding $\phi_P$ is optimal.
%=\End(\widetilde{A},\widetilde{i})
\begin{proposition}\label{Howbimis}
Assume that $p$ ramifies in $K$. Then $\cM_{\widetilde  P}$ is admissible at $p$ and $(\cM_{\widetilde  P})_p$ is of type $(1,1)$. Furthermore, the algebra $\End_{\cO}^{\cS}(\cM_{\widetilde  P})$ admits a natural structure of oriented Eichler order of level $Np$ in the quaternion algebra of discriminant $\frac{D}{p}$.
\end{proposition}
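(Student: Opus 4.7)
My plan is to localize at $p$. Since the maximal order $\cS$ in the definite algebra $H$ of discriminant $p$ is ramified only at $p$, we have $\Sigma=\{p\}$, so both the admissibility of $\cM_{\widetilde P}$ and its local type reduce to a computation in $\cM_p=\cO_p\otimes_{R_p}\cS_p$. The crucial input is that $p$ ramifies in $K$ and that $R$ is maximal at $p$ by \cite[Theorem 3.1]{Vig}, so $R_p$ is the ring of integers of the ramified quadratic extension $K_p/\Q_p$, with a uniformizer $q_p$ generating $\wq_p$ and of reduced norm equal to $p$ up to a unit.

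\textbf{Admissibility at $p$.} Both $\cO_p$ and $\cS_p$ are maximal orders in division quaternion algebras over $\Q_p$, and in such an order every element whose reduced norm is a uniformizer generates the unique two-sided maximal ideal. Hence $\varphi_P(q_p)$ generates $\dP_\cO$ and $\psi(q_p)$ generates $\dP_\cS$. Applying the balance relation $x\varphi_P(r)\otimes s=x\otimes\psi(r)s$ at $r=q_p$ gives
\[
\dP_\cO\cdot\cM_p \;=\; \cO_p\varphi_P(q_p)\otimes_{R_p}\cS_p \;=\; \cO_p\otimes_{R_p}\psi(q_p)\cS_p \;=\; \cM_p\cdot\dP_\cS,
\]
establishing admissibility.

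\textbf{Type $(1,1)$.} By admissibility, $\cM_p/\dP_\cO\cM_p$ carries commuting actions of $\cO_p/\dP_\cO\cong\F_{p^2}$ and $\cS_p/\dP_\cS\cong\F_{p^2}$ (via the orientations), and hence is a module over $\F_{p^2}\otimes_{\F_p}\F_{p^2}\cong\F_{p^2}\times\F_{p^2}$. Right-exactness of $-\otimes_{R_p}\cS_p$, together with $R_p/\wq_p=\F_p$, gives
\[
\cM_p/\dP_\cO\cM_p \;=\; \cO_p\otimes_{R_p}(\cS_p/\dP_\cS) \;\cong\; \F_{p^2}\otimes_{\F_p}\F_{p^2},
\]
whose decomposition into the two factors of $\F_{p^2}\times\F_{p^2}$ assigns one $\F_{p^2}$-line to each idempotent eigenspace. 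In Ribet's local classification $\cM_p\cong\cO^r\oplus\dP^s$ with $r+s=2$, the pure types $(2,0)$ and $(0,2)$ yield a residue concentrated in a single eigenspace (with multiplicity $2$), so our bimodule must be of type $(1,1)$.

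\textbf{The oriented Eichler order.} Theorem~\ref{1-1bimodref} now applies to the admissible rank-$8$ bimodule $\cM_{\widetilde P}$. With $\Sigma=\{p\}$ and $(r_p,s_p)=(1,1)$, its invariants are
\[
D_0^{\cM_{\widetilde P}}=\prod_{q\mid D_BD_H,\,q\ne p}q=D/p,\qquad N_0^{\cM_{\widetilde P}}=p,
\]
so $\End_\cO^\cS(\cM_{\widetilde P})$ is canonically an oriented Eichler order in $\Pic(D/p,Np)$: level $Np$ in the quaternion algebra of discriminant $D/p$, as asserted. I expect the main technical subtlety to be the precise matching between the residue decomposition and Ribet's type labels; a safer (but more laborious) alternative would be to compute $\End_\cO^\cS(\cM_{\widetilde P})_p$ directly, recognize it as Eichler of level $p$ in $M_2(\Q_p)$, and invoke Proposition~\ref{EndbiMod} to read off the type.
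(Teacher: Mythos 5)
Your proof is correct and follows essentially the same approach as the paper's. The paper phrases the local calculation at $p$ in matrix terms (choosing an $R_p$-basis of $\cO_p$ to describe the $\cO_p$-action as $f:\cO_p\hookrightarrow M(2,R_p)\hookrightarrow M(2,\cS_p)$ and then reducing modulo $\mathfrak{p}_{\cO_p}$), whereas you work directly with the tensor product $\cO_p\otimes_{R_p}\cS_p$; but both arguments hinge on the same two facts, namely that a uniformizer of $R_p$ simultaneously generates $\dP_\cO$ and $\dP_\cS$ (giving admissibility), and that the residue $\F_{p^2}\otimes_{\F_p}\F_{p^2}$ splits into two distinct eigenlines (giving type $(1,1)$).
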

\begin{proof}
Since $\cO_p=\cS_p$ is free as right $R_p$-module, we may choose a basis of $\cO_p$ over $R_p$. In terms of this basis, the action of $\cO_p$ on itself given by right multiplication is described by a homomorphism
\[f:\cO_p\hookrightarrow M(2,R_p).\]
Since $p$ ramifies in $K$, the maximal ideal $\mathfrak{p}_{\cO_p}$ of $\cO_p$ is generated by an uniformizer $\pi$ of $R_p$ (cf. \cite[Corollaire II.1.7]{Vig}). This shows that $f(\mathfrak{p}_{\cO_p})\subseteq M(2,\pi R_p)$.

This allows us to conclude that $\cM_p$ is admissible at $p$. Indeed, in matrix terms, the local bimodule $\cM_p$ is given by the composition of $f$ with the natural inclusion $M(2,R_p)\hookrightarrow M(2,\cS_p)$. Notice that $M(2,\pi R_p)$ is mapped into $M(2,\mathfrak{p}_{\cS_p})$ under that inclusion.

In order to check that $r_p=1$, reduction modulo $\mathfrak{p}_{\cO_p}$ yields an embedding
\[\widetilde{f}:\cO/\mathfrak{p}_{\cO}\hookrightarrow M(2,\F_p)\]
because the residue field of $K$ at $p$ is the prime field $\F_p$. %Notice that if $\cO/\mathfrak{p}_{\cO}=\F_p[\alpha]$, then its image under $\widetilde{f}$ is $\F_p[\widetilde{f}(\alpha)]$, hence the image of the extension $\widetilde{f}\otimes\F_{p^2}$ is $\F_{p^2}[\widetilde{f}(\alpha)]=\F_{p^2}\times\overline{\F_{p^2}}$. Therefore,
After extending to a quadratic extension of $\F_p$, this representation of $\cO/\mathfrak{p}_{\cO}\cong\F_{p^2}$ necessarily splits into the direct sum of the two embeddings of $\F_{p^2}$ into $\F$. On the other hand, we know that $\cM_p=\cO_p^r\times\mathfrak{p}_{\cO_p}^s$ with $s+r=2$. Consequently, we deduce that $\cM_p\cong\cO_p\times\mathfrak{p}_{\cO_p}$ and $\cM_p$ is of type $(1,1)$.

Finally, since in this case $D_0=\frac{D}{p}$ and $N_0=p$, it follows from Theorem \ref{1-1bimodref} that $\End_{\cO}^{\cS}(\cM)\in\E(D/p,Np)$ .
\end{proof}

\begin{theorem}\label{redCMsing}
A Heegner point $P\in \CM(R)$ of $X_0(D,N)$ reduces to a singular point of $\tilde X_0(D,N)$ if and only if $p$ ramifies in $K$.
\end{theorem}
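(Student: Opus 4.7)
The plan is to prove both directions. For the ``if'' direction, if $p$ ramifies in $K$, Proposition~\ref{Howbimis} shows that $\cM_{\widetilde P}=\cO\otimes_R\cS$ is admissible at $p$ and of type $(1,1)$, which is precisely the condition characterizing singular points of $\tilde X_0(D,N)$ recalled at the start of this section; hence $\widetilde P$ is singular.

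For the converse I argue by contrapositive: assuming $p$ does not ramify in $K$, I show that $\cM_{\widetilde P}$ is not admissible at $p$, so $\widetilde P$ is non-singular. The optimal embedding $\varphi_P\colon R\hookrightarrow \cO$ forces $K_p=R_p\otimes\Q_p$ to embed into the quaternion division algebra $B_p$ over $\Q_p$, which rules out the case of $p$ split in $K$. Thus only the inert case remains, and then $R_p$ is the unramified quadratic extension of $\Z_p$ with uniformizer $p$. Choose uniformizers $\Pi\in\cO_p$ and $\Pi'\in\cS_p$ normalized so that $\Pi^2=\Pi'^2=p$. Writing $\cO_p=R_p\oplus\Pi R_p$ and $\cS_p=R_p\oplus\Pi'R_p$ as right $R_p$-modules, the local bimodule $\cM_{\widetilde P,p}=\cO_p\otimes_{R_p}\cS_p$ is free of rank $2$ as right $\cS_p$-module with basis $e_1=1\otimes 1$, $e_2=\Pi\otimes 1$. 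A direct calculation using $\Pi^2=p$ yields
\[
\mathfrak{p}_{\cO_p}\cM_{\widetilde P,p}=e_1\,\mathfrak{p}_{\cS_p}^2\oplus e_2\,\cS_p \qquad\text{and}\qquad \cM_{\widetilde P,p}\,\mathfrak{p}_{\cS_p}=e_1\,\mathfrak{p}_{\cS_p}\oplus e_2\,\mathfrak{p}_{\cS_p},
\]
and these two submodules of $\cM_{\widetilde P,p}$ are incomparable (for instance $e_2\in \mathfrak{p}_{\cO_p}\cM_{\widetilde P,p}\setminus\cM_{\widetilde P,p}\mathfrak{p}_{\cS_p}$), so admissibility at $p$ fails.

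The main obstacle is the careful bookkeeping of the uniformizers of $R_p$, $\cO_p$ and $\cS_p$: in the ramified case some uniformizer of $R_p$ maps via $\varphi_P$ (resp.\ $\psi$) to a uniformizer of $\cO_p$ (resp.\ $\cS_p$), so that $\mathfrak{p}_{\cO_p}$ and $\mathfrak{p}_{\cS_p}$ are generated by a common element of $R_p$ which can be moved across the tensor product, forcing the equality $\mathfrak{p}_{\cO_p}\cM_{\widetilde P,p}=\cM_{\widetilde P,p}\mathfrak{p}_{\cS_p}$ in Proposition~\ref{Howbimis}. In the inert case the only uniformizer of $R_p$ available is $p=\Pi^2=\Pi'^2$, which lies two steps deep in the filtrations of $\cO_p$ and $\cS_p$; this valuation mismatch is precisely what obstructs admissibility.
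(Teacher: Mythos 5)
Your proof is correct, and while the ``if'' direction is identical to the paper's (both go through Proposition~\ref{Howbimis}), your argument for the converse takes a genuinely different route. The paper assumes $\widetilde P$ is singular, so that $\cM_{\widetilde P}$ is admissible of type $(1,1)$, whence $\End_\cO^\cS(\cM_{\widetilde P})$ is an oriented Eichler order of level $Np$ in the quaternion algebra of discriminant $D/p$; the optimal embedding $\phi_P$ then gives an element of $\CM_{D/p,Np}(R)$, which by Vign\'eras's criterion for local embeddings at $p\parallel Np$ forces $p$ not to be inert. You instead argue by contrapositive and compute directly: in the inert case, with $\Pi^2=\Pi'^2=p$ and basis $e_1=1\otimes 1$, $e_2=\Pi\otimes 1$, you get $\mathfrak{p}_{\cO_p}\cM_{\widetilde P,p}=e_1\mathfrak{p}_{\cS_p}^2\oplus e_2\cS_p$ and $\cM_{\widetilde P,p}\mathfrak{p}_{\cS_p}=e_1\mathfrak{p}_{\cS_p}\oplus e_2\mathfrak{p}_{\cS_p}$, and the discrepancy (e.g.\ $e_2$ lies in the first but not the second) kills admissibility. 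I checked your computation and it is right; note that $\Pi e_2=\Pi^2\otimes 1=p\otimes 1=1\otimes p=e_1\,p$ uses only that $p$ is central and lies in the image of $R_p$, so the move across the tensor product is legitimate. Your approach has the advantage of being self-contained (no appeal to Vign\'eras's embedding theorem for the converse) and of making transparent the ``valuation mismatch'' responsible for the failure; the paper's approach has the advantage of reusing the bijection $\Pic_\cO^\cS(\cM)\simeq\Pic(D/p,Np)$ that is central to everything that follows, so it integrates better with \S\ref{CMsing}. One stylistic quibble: you call $\cS_p=R_p\oplus\Pi'R_p$ a right $R_p$-module, but for $\cO_p\otimes_{R_p}\cS_p$ you need $\cS_p$ as a \emph{left} $R_p$-module via $\psi$; the decomposition is the same (since $\Pi'R_p=R_p\Pi'$), so the computation is unaffected, but the phrasing should be fixed.
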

\begin{proof}
As remarked above, $p$ is inert or ramifies in $K$. Assume first that $p$ ramifies in $K$. Then by Proposition \ref{Howbimis} the bimodule $\cM_{\widetilde P}$ is admissible at $p$ and of type $(1,1)$. It follows from \cite[Theorem 4.15, Theorem 5.3]{Rib} that the point $\widetilde P\in\tilde X_0(D,N)$ is singular.

Suppose now that $\widetilde P=[\widetilde A,\widetilde i]$ is singular.
Then its corresponding $(\cO,\cS)$-bimodule $\cM_{\widetilde P}$ is admissible at $p$ and of type $(1,1)$. Thus $\End(\widetilde A,\widetilde i)=\End_\cO^\cS(\cM_{\widetilde P})\in\Pic(D/p,Np)$ and the conjugacy class of the optimal embedding $\phi_P:R\hookrightarrow \End_\cO^\cS(\cM_{\widetilde P})$ is an element of $\CM_{\frac{D}{p},Np}(R)$. If such an embedding exists, then $p$ can not be inert in $K$ thanks to \cite[Theorem 3.2]{Vig}. Hence $p$ ramifies in $K$.
\end{proof}

That points $P\in\CM(R)$ specialize to the non-singular locus of $\tilde X_0(D,N)$ when $p$ is unramified in $K$ was already known by the experts (cf. e.g. \cite{Longo}) and can also be easily deduced by rigid analytic methods. The novelty of Theorem \ref{redCMsing} is the converse.

\subsection{Heegner points and the singular locus}\label{CMsing}

As explained before, singular points of $\tilde X_0(D,N)$ are in one-to-one correspondence with isomorphism classes of $(\cO,\cS)$-bimodules which are admissible at $p$ and of type $(1,1)$. By Theorem \ref{1-1bimodref}, such an isomorphism class is characterized by the isomorphism class of the endomorphism ring $\End_\cO^\cS(\cM)$, which is an oriented Eichler order of level $Np$ in the quaternion algebra of discriminant $D/p$. Thus the set $\tilde X_0(D,N)_{\rm{sing}}$ of singular points of $\tilde X_0(D,N)$ is in natural one-to-one correspondence with $\Pic(D/p,Np)$.

Let $P=[A,i]\in\CM(R)$ be a Heegner point. % with supersingular specialization $[\widetilde A,\widetilde i]=\widetilde P$ modulo $p$.
As proved in Theorem \ref{redCMsing}, $P$ specializes to a singular point $\widetilde P$ if and only if $p$ ramifies in $K$. If this is the case, the optimal embedding $\phi_P:R\hookrightarrow\End_\cO^\cS(\cM_{\widetilde P})$ provides an element of $\CM_{\frac{D}{p},Np}(R)$.

Therefore, the map $\phi$ of \eqref{Phi}, which was constructed by means of the reduction of $X_0(D,N)$ modulo $p$, can be interpreted as a map between CM-sets:
\begin{equation}\label{Phi1}
\Phi:\CM_{D,N}(R)\longrightarrow \CM_{\frac{D}{p},Np}(R).
\end{equation}
Moreover, composing with the isomorphism $\CM(R)\simeq \CM_{D,N}(R)$ of $\eqref{phip}$ and the projection $\pi:\CM_{\frac{D}{p},Np}(R)\ra\Pic(\frac{D}{p},Np)$ of \eqref{proj}, the resulting map $\CM(R)\ra\Pic(\frac{D}{p},Np)$ describes the specialization of $P\in\CM(R)$ at $p$.

\begin{theorem}\label{CMsingthm}
The map $\Phi:  \CM_{D,N}(R)  \longrightarrow  \CM_{\frac{D}{p},Np}(R)$ is equivariant for the action of $W(D,N)$ and, up to sign, of $\Pic(R)$. More precisely:
\[\Phi([J]\ast\varphi)=[J]^{-1}\ast\Phi(\varphi),\quad\quad\Phi(w_m(\varphi))=w_m(\Phi(\varphi))\]
for all $m\parallel DN$, $[J]\in\Pic(R)$ and $\varphi:R\hookrightarrow\cO$ in $\CM_{D,N}(R)$. Moreover, if $N$ is square free, $\Phi$ is bijective.
\end{theorem}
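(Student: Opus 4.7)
The plan is to realize $\Phi$ explicitly as the composition of the bimodule-level map $\phi$ of \eqref{Phi} with the dictionary \eqref{admCM} provided by Theorem \ref{1-1bimodref}. Proposition \ref{Howbimis} guarantees that, whenever $p$ ramifies in $K$, the bimodule $\cM_{\widetilde P}$ is admissible at $p$ and of type $(1,1)$, with $D_0^{\cM_{\widetilde P}}=D/p$ and $N_0^{\cM_{\widetilde P}}=p$, so that \eqref{admCM} provides an honest identification $\CM_\cO^\cS(\cM_{\widetilde P})\simeq\CM_{D/p,Np}(R)$, and $\Phi$ is by definition $\phi$ followed by this identification.

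The $\Pic(R)$-relation $\Phi([J]\ast\varphi)=[J]^{-1}\ast\Phi(\varphi)$ follows immediately from Corollary \ref{CorPic} once one records, as discussed right before that corollary, that the identification \eqref{admCM} is $\Pic(R)$-equivariant. For the $W(D,N)$-equivariance, Theorem \ref{actA_L} together with Remark \ref{A_L_remark} handle every Atkin-Lehner involution $w_m$ whose support $m$ is coprime to $p$. The main technical obstacle is the remaining case $w_p$, since both \S\ref{ALss} and Remark \ref{A_L_remark} are stated only for primes $q\neq p$. I would handle $w_p$ by exploiting admissibility itself: the identity $\dP_\cO\cM_{\widetilde P}=\cM_{\widetilde P}\dP_\cS$ forces $\dP_\cO\otimes_\cO \dN$ to be locally isomorphic to $\dN$ at $p$, so $\dN\mapsto\dP_\cO\otimes_\cO \dN$ defines an involution on $\Pic_\cO^\cS(\cM_{\widetilde P})$; under \eqref{admCM} this corresponds to twisting the Eichler order $\Lambda=\End_\cO^\cS(\dN)$ by its unique two-sided ideal of norm $p$, that is, to the Atkin-Lehner involution at the level-$p$ factor of $\Lambda$. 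With this extension in hand, the computation of Theorem \ref{actA_L} applies verbatim via Lemmas \ref{auxactPic} and \ref{lemdifbimod} and delivers $\Phi(w_p(\varphi))=w_p(\Phi(\varphi))$.

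For bijectivity under the square-free assumption, I would argue by comparing cardinalities. Since $(D,N)=1$ and $p\mid D$, the new level $Np$ is also square-free, so the transitivity statement at the end of \S\ref{QOOE} shows that $\CM_{D,N}(R)$ and $\CM_{D/p,Np}(R)$ are (possibly empty) torsors over $W_{D,N}(R)\times\Pic(R)$ and $W_{D/p,Np}(R)\times\Pic(R)$, respectively. Because $p$ ramifies in $K$, $m_p=1$ in both settings by \cite[Theorem 3.1, Theorem 3.2]{Vig}, so $p$ contributes to neither $D(R)N(R)$ nor its analogue for the target; for every $q\neq p$ the two local orders coincide and hence so do the invariants $m_q$. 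Thus $W_{D,N}(R)=W_{D/p,Np}(R)$, and \eqref{h} yields $\#\CM_{D,N}(R)=\#\CM_{D/p,Np}(R)$. The $\Pic(R)\times W_{D,N}(R)$-equivariant map $\Phi$ between two torsors of the same finite cardinality is automatically a bijection.
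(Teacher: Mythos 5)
Your proof is correct in substance, but it diverges from the paper precisely where the treatment of $w_p$ is concerned, and that is worth spelling out.

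For the $\Pic(R)$-reciprocity, the $w_m$-equivariance for $m$ coprime to $p$, and the bijectivity via the torsor structure, your argument coincides with the paper's. The difference lies in $w_p$. The paper handles it indirectly: since $p$ ramifies in $K$, $m_p=1$, so $w_p$ preserves local equivalence (Vign\'eras, Th.~II.3.1) on both $\CM_{D,N}(R)$ and $\CM_{D/p,Np}(R)$; by the free transitive action this means $w_p$ coincides with the action of some $2$-torsion class $[\mathfrak p]\in\Pic(R)$, and the $w_p$-equivariance then follows from the already-established $\Pic(R)$-reciprocity and $[\mathfrak p]^{-1}=[\mathfrak p]$. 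You instead extend the bimodule machinery of \S\ref{ALss} and Remark~\ref{A_L_remark} to the prime $q=p$ by checking directly that $\dN\mapsto\dP_\cO\otimes_\cO\dN$ stabilizes $\Pic_\cO^\cS(\cM_{\widetilde P})$ and matches the Atkin--Lehner twist of $\Lambda$ under \eqref{admCM}, then re-running the computation of Theorem~\ref{actA_L}. Both routes are valid; yours is more computational and self-contained (it does not pass through the torsor structure, so in principle applies even without square-free $N$), whereas the paper's is shorter and piggybacks on the $\Pic(R)$-statement already proved.

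One point in your argument needs a small correction. You write that admissibility, i.e.\ the identity $\dP_\cO\cM_{\widetilde P}=\cM_{\widetilde P}\dP_\cS$, ``forces $\dP_\cO\otimes_\cO\dN$ to be locally isomorphic to $\dN$ at $p$.'' That implication is false as stated: an admissible bimodule of local type $(2,0)$ has $\dP_\cO\otimes_\cO\dN$ locally of type $(0,2)$, so admissibility alone does not preserve the local isomorphism class. What you actually need is that $(\cM_{\widetilde P})_p$ has type $(1,1)$, in which case $\dP_{\cO_p}(\cO_p\times\dP_{\cO_p})\cong\dP_{\cO_p}\times p\cO_p\cong\cO_p\times\dP_{\cO_p}$, so the type is indeed preserved. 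Proposition~\ref{Howbimis} gives you both admissibility and the type $(1,1)$, so the argument survives once you cite the correct fact; but as phrased the justification is the wrong one.
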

\begin{proof}
The statement for $\Pic(R)$ is Corollary \ref{CorPic}. It follows from Theorem \ref{actA_L} and Remark \ref{A_L_remark} that $\Phi(w_m(\varphi))=w_m(\Phi(\varphi))$ for all $m\parallel ND/p$. Since $p$ ramifies in $K$, $w_p\in W(D,N)\cong W(\frac{D}{p},Np)$ preserves local equivalence by \cite[Theorem II.3.1]{Vig}. Hence the action of $w_p$ coincides with the action of some $[\dP]\in\Pic(R)$.
This shows that $\Phi(w_m(\varphi))=w_m(\Phi(\varphi))$ for all $m\parallel ND$.
%there exists an ideal $\dP\subset R$ such that $\varphi(\dP)\cO=\dP_{\cO}$ (cf.\cite[Theorem II.1.7]{Vig}).

Finally, in order to check that $\Phi$ is a bijection when $N$ is square free, observe that $\Pic(R)\times W_{D,N}(R)=\Pic(R)\times W_{\frac{D}{p},Np}(R)$ acts freely and transitively both on $\CM_{D,N}(R)$ and on $\CM_{\frac{D}{p},Np}(R)$.
\end{proof}

\subsection{Heegner points and the smooth locus}\label{CMsmooth}

In \cite[\S4]{Rib}, Ribet describes the smooth locus $\tilde X_0(D,N)_{\rm{ns}}$ of $\tilde X_0(D,N)$ in terms of abelian surfaces $(\widetilde A,\widetilde i)$ with QM over $\F$. We proceed to summarize the most important ideas of such description.

Let $(\widetilde A,\widetilde i)$ be an abelian variety with QM by $\cO$. By \cite[Propositions 4.4 and 4.5]{Rib}, the isomorphism class $[\widetilde A,\widetilde i]$ defines a non-singular point $\widetilde P\in \tilde X_0(D,N)_{\rm{ns}}$ if and only if $\widetilde A$ has exactly one subgroup scheme $H_{\widetilde P}$, which is $\cO$-stable and isomorphic to $\alpha_p$. There $\alpha_p$ stands for the usual inseparable group scheme of rank $p$. Furthermore, by considering the quotient $\widetilde B=\widetilde A/H_{\widetilde P}$ and the embedding $j:\cO\hookrightarrow\End_\F \widetilde B$ induced by $\widetilde i$, we obtain an abelian surface $(\widetilde B,j)$ with QM such that $a(\widetilde B)=2$. The pair $(\widetilde B,j)$ defines an admissible bimodule $\cM^{\widetilde P}=\cM_{(\widetilde B,j)}$ which has either type $(2,0)$ or type $(0,2)$.

The set of irreducible components of $\tilde X_0(D,N)$ is in one-to-one correspondence with the set of isomorphism classes of admissible $(\cO,\cS)$-bimodules of type $(2,0)$ and $(0,2)$. With this in mind, the bimodule $\cM^{\widetilde P}$ determines the component where the point $\widetilde P$ lies. Moreover, the Atkin-Lehner involution $w_p\in\End_\Q(X_0(D,N))$ maps bimodules of type $(2,0)$ to those of type $(0,2)$, and viceversa. By Theorem \ref{1-1bimodref},
such bimodules $\cM_{(\widetilde B,j)}$ are characterized by their type and their endomorphism ring $\End_\cO^\cS(\cM_{(\widetilde B,j)})=\End(\widetilde B,j)\in\Pic(\frac{D}{p},N)$. Hence the set of
the irreducible components of $\tilde X_0(D,N)$ is in one-to-one correspondence with two copies of $\Pic(\frac{D}{p},N)$, one copy for each type, $(0,2)$ and $(2,0)$. %Such components are characterized by the type of $\cM^{\widetilde P}$ and the endomorphism ring $\End_\cO^\cS(\cM^{\widetilde P})=\End(\widetilde B,j)\in\Pic(\frac{D}{p},N)$,
Finally, the automorphism $w_p$ exchanges both copies of $\Pic(\frac{D}{p},N)$.

Let $P=[\widetilde A,\widetilde i]\in\tilde X_0(D,N)$ be a non-singular point and assume that $a(\widetilde A)=2$. Let $\cM_{\widetilde P}$ be its associated $(\cO,\cS)$-bimodule. The subgroup scheme $H_{\widetilde P}$ gives rise to a degree-$p$ isogeny $\mu_{\widetilde P}:\widetilde A\ra\widetilde B=\widetilde A/H_{\widetilde P}$ such that $\mu_{\widetilde P}\widetilde i(\alpha)=j(\alpha)\mu_{\widetilde P}$ for all $\alpha\in\cO$.

Since $\widetilde{A}\simeq\widetilde B\simeq \widetilde E^2$, we may fix an isomorphism of algebras $\End_\F(\widetilde{A})\ra M_2(\cS)$. Then each isogeny can be regarded as a matrix with coefficients in $\cS$. In order to characterize the bimodule $\cM^{\widetilde P}$ in terms of $\cM_{\widetilde P}$ we shall use the following proposition.

\begin{proposition}\label{bim-bim}
Let $(\mathcal A, \mathfrak{i})/\F$ and $(\mathcal A,\mathfrak{j})/\F$ be abelian surfaces with QM by $\cO$ such that $a(\mathcal A)=2$. Let $\cM$ and $\widehat{\cM }$ be their associated $(\cO,\cS)$-bimodules. Consider the usual morphisms attached to each bimodule
\[f_{\cM }:\cO\longrightarrow M_2(\cS),\quad f_{\widehat{\cM} }:\cO\longrightarrow \cM_2(\cS),\]
and assume there exists an isogeny $\gamma:\mathcal A\longrightarrow \mathcal A$ such that $f_{\cM}(\alpha)\gamma=\gamma f_{\widehat{\cM} }(\alpha)$ for all $\alpha\in\cO$.
Then the image $\gamma \cM$, where $\cM$ is viewed as a free right $\cS$-module of rank 2, is $\cO$-stable.
Furthermore $\widehat{\cM}\cong\gamma \cM$ as $(\cO,\cS)$-bimodules.
\end{proposition}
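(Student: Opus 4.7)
The plan is to unpack the bimodule presentation from Remark \ref{bimodM2} and reduce the statement to matrix-level computations. Since $a(\mathcal A)=2$, a choice of isomorphism $\mathcal A\cong \widetilde E\times\widetilde E$ produces $\End(\mathcal A)\simeq M_2(\cS)$ and realizes both $\cM$ and $\widehat{\cM}$ as copies of the free right $\cS$-module $\cS^2$: the right $\cS$-action is componentwise multiplication (which commutes with any element of $M_2(\cS)$ acting from the left), while the two left $\cO$-actions are the ones defined by $f_\cM$ and $f_{\widehat{\cM}}$. Under this identification the isogeny $\gamma$ is an element of $M_2(\cS)$ acting on $\cS^2$ by left matrix multiplication.

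First I would observe that $\gamma\cM\subseteq \cS^2$ is automatically a right $\cS$-submodule, since scalar right multiplication commutes with left matrix multiplication. For $\cO$-stability with respect to the action $f_\cM$, the intertwining hypothesis gives, for every $\alpha\in\cO$ and $v\in\cM=\cS^2$,
\[
f_\cM(\alpha)\cdot(\gamma v)=\bigl(f_\cM(\alpha)\gamma\bigr)v=\bigl(\gamma f_{\widehat{\cM}}(\alpha)\bigr)v=\gamma\cdot\bigl(f_{\widehat{\cM}}(\alpha)v\bigr)\in\gamma\cM,
\]
which settles the first claim.

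For the bimodule isomorphism I would consider the map $\phi:\widehat{\cM}\longrightarrow \gamma\cM$ given by $\phi(v)=\gamma v$. Right $\cS$-linearity is again the fact that scalar right multiplication commutes with left matrix multiplication, and left $\cO$-linearity is precisely the intertwining relation: $\phi(f_{\widehat{\cM}}(\alpha)v)=\gamma f_{\widehat{\cM}}(\alpha)v=f_\cM(\alpha)\gamma v=f_\cM(\alpha)\phi(v)$. Surjectivity onto $\gamma\cM$ holds by construction. Injectivity is the one step that uses the hypothesis that $\gamma$ is an \emph{isogeny}: as such it is invertible in $\End^0(\mathcal A)=M_2(\cS)\otimes\Q=M_2(\cH)$, so left multiplication by $\gamma$ is already injective on the $\Q$-vector space $\cH^2$, hence a fortiori on $\cS^2\subset\cH^2$.

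The argument is essentially a bookkeeping exercise, and I do not anticipate any serious obstacle. The only conceptually non-trivial input is the use of the isogeny assumption to convert $\gamma$ into an honest invertible element of $M_2(\cH)$, which guarantees injectivity of $\phi$; the rest is unwinding the identifications of Remark \ref{bimodM2} and repeatedly invoking the intertwining relation $f_\cM(\alpha)\gamma=\gamma f_{\widehat{\cM}}(\alpha)$.
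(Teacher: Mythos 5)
Your proof is correct and follows essentially the same route as the paper: identify both bimodules with the free right $\cS$-module $\cS^2$, use the intertwining relation $f_\cM(\alpha)\gamma=\gamma f_{\widehat{\cM}}(\alpha)$ to show $\gamma\cM$ is $\cO$-stable with $\cO$ acting via $f_{\widehat{\cM}}$, and conclude $\gamma\cM\cong\widehat{\cM}$. The only cosmetic difference is that you spell out the injectivity of left-multiplication by $\gamma$ via its invertibility in $M_2(H)$, a point the paper leaves implicit in its claim that $\{\gamma e_1,\gamma e_2\}$ remains an $\cS$-basis.
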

\begin{proof}
The right $\cS$-module $\gamma \cM$ is $\cS$-free of rank 2 with basis $\{\gamma e_1,\gamma e_2\}$, provided that $\{e_1,e_2\}$ is a $\cS$-basis for $\cM$.
An element of $\gamma \cM$ can be writen as:
\[\gamma e_1a+\gamma e_2 b=\left(\begin{array}{cc}\gamma e_1 &\gamma  e_2\end{array}\right)\left(\begin{array}{c} a \\  b\end{array}\right)=\left(\begin{array}{cc} e_1 &  e_2\end{array}\right)\gamma\left(\begin{array}{c} a \\  b\end{array}\right).\]
Therefore any $\alpha\in\cO$ acts on it as follows:
\[
\left(\begin{array}{cc} e_1 &  e_2\end{array}\right)f_{\cM }(\alpha)\gamma\left(\begin{array}{c} a \\  b\end{array}\right)=\left(\begin{array}{cc} e_1 &  e_2\end{array}\right)\gamma f_{\widehat{\cM}}(\alpha)\left(\begin{array}{c} a \\  b\end{array}\right)=\left(\begin{array}{cc}\gamma e_1 & \gamma e_2\end{array}\right)f_{\widehat{\cM}}(\alpha)\left(\begin{array}{c} a \\  b\end{array}\right).
\]
Thus $\gamma \cM$ is $\cO$-stable and $\cO$ acts on it through the map $f_{\widehat{\cM}}$. We conclude that $\gamma \cM\cong\widehat{\cM}$ as $(\cO,\cS)$-bimodules.
\end{proof}
Applying the above proposition to $f_{\cM }=j$, $f_{\widehat{\cM}}=\widetilde i$ and $\gamma=\mu_{\widetilde P}$, we obtain that $\mu_{\widetilde P}\cM^{\widetilde P}=\cM_{\widetilde P}$.
%\begin{corollary}\label{bimodGoodRed}
%Given the pair $(\widetilde{A},\widetilde{i})$ that defines the reduction of a Heegner point $P\in\CM(R)$, let $H$ be the $\cO$-stable subgroup of $\widetilde{A}$ isomorphic to $\alpha_p$, and let $j:\cO\longrightarrow \End(\widetilde{A}/H)$ be the embedding induced by $\widetilde{i}$. Then the bimodule associated to $(\widetilde{A}/H,j)$ is $\widehat{\mu_p}(\cO\otimes_R\cS)$, where $\widehat{\mu_p}\in \cM_2(\cS)$ is the matrix given by the dual isogeny of $\mu_p$.
%\end{corollary}

%Let $\cM=\cO\otimes_R\cS$ be the bimodule associated to some Heegner point $P\in\CM(R)$.

Note that endomorphisms $\lambda\in\End(\widetilde A)$ which fix $H_{\widetilde P}$ give rise to endomorphisms $\widehat\lambda\in\End(\widetilde B)$. If in addition $\lambda\in\End(\widetilde A, \widetilde i)$ then $\widehat\lambda$ lies in $\End(\widetilde B,j)$.

\begin{lemma}
Every endomorphism in $\End(\widetilde A, \widetilde i)$ fixes $H_{\widetilde P}$.
\end{lemma}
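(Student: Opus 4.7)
The plan is to use the uniqueness property of $H_{\widetilde P}$ recalled just before the lemma: among all subgroup schemes of $\widetilde A$ which are $\cO$-stable and isomorphic to $\alpha_p$, there is exactly one, namely $H_{\widetilde P}$ itself. The strategy is then to show that for any $\lambda\in\End(\widetilde A,\widetilde i)$, the scheme-theoretic image $\lambda(H_{\widetilde P})$ is either trivial or again an $\cO$-stable subgroup scheme isomorphic to $\alpha_p$, which by uniqueness forces $\lambda(H_{\widetilde P})\subseteq H_{\widetilde P}$.

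First I would observe that, since $\lambda$ is a homomorphism of group schemes, the image $\lambda(H_{\widetilde P})$ is a closed subgroup scheme of $\widetilde A$, finite of order dividing $p$. Because $H_{\widetilde P}\cong\alpha_p$ is simple (it has no non-trivial proper subgroup schemes), its image under $\lambda$ is either $0$ or isomorphic to $\alpha_p$.

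Next I would verify $\cO$-stability of $\lambda(H_{\widetilde P})$. For any $\alpha\in\cO$, the defining property $\widetilde i(\alpha)\circ\lambda=\lambda\circ\widetilde i(\alpha)$ gives
\[
\widetilde i(\alpha)\bigl(\lambda(H_{\widetilde P})\bigr)=\lambda\bigl(\widetilde i(\alpha)(H_{\widetilde P})\bigr)\subseteq\lambda(H_{\widetilde P}),
\]
where the last inclusion uses the $\cO$-stability of $H_{\widetilde P}$ itself. Hence $\lambda(H_{\widetilde P})$ is $\cO$-stable.

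Combining the two steps, if $\lambda(H_{\widetilde P})\neq 0$ it is an $\cO$-stable subgroup scheme of $\widetilde A$ isomorphic to $\alpha_p$; by the uniqueness quoted from \cite[Propositions 4.4, 4.5]{Rib} applied to the non-singular point $\widetilde P$, it must equal $H_{\widetilde P}$. In the other case, $\lambda(H_{\widetilde P})=0\subseteq H_{\widetilde P}$. Either way $\lambda$ fixes $H_{\widetilde P}$. The only conceptual subtlety is the justification that $\lambda(H_{\widetilde P})$ is a well-defined subgroup scheme and that the uniqueness of $H_{\widetilde P}$ applies here; both are immediate once one appeals to the characterization of non-singular points of $\tilde X_0(D,N)$ recalled at the start of \S\ref{CMsmooth}.
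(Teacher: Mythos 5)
Your proposal is correct and follows essentially the same argument as the paper: note $\lambda(H_{\widetilde P})$ is trivial or of rank $p$, check $\cO$-stability from the commuting relation, and invoke uniqueness. The only cosmetic difference is that you deduce $\lambda(H_{\widetilde P})\cong\alpha_p$ from simplicity of $\alpha_p$, whereas the paper appeals to supersingularity of $\widetilde A$; both are fine.
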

\begin{proof}
Let $\lambda\in\End(\widetilde A, \widetilde i)$. Then $\lambda(H_{\widetilde P})$ is either trivial or a subgroup scheme of rank $p$. If $\lambda(H_{\widetilde P})=0$, the statement follows. Assume thus that $\lambda(H_{\widetilde P})$ is a subgroup scheme of rank $p$. Since $\widetilde A$ is supersingular, $\lambda(H_{\widetilde P})$ is isomorphic to $\alpha_p$. Moreover, for all $\alpha\in\cO$ we have that $\widetilde i(\alpha)(\lambda(H_{\widetilde P}))=\lambda(\widetilde i(\alpha)(H_{\widetilde P}))=\lambda(H_{\widetilde P})$. Therefore $\lambda(H_{\widetilde P})$ is $\cO$-stable and consequently $\lambda(H_{\widetilde P})=H_{\widetilde P}$, by uniqueness.
\end{proof}

By the preceding result, there is a monomorphism $\End(\widetilde A, \widetilde i)\ra\End(\widetilde B, j)$ corresponding, via bimodules, to the monomorphism $\delta_{\widetilde P}:\End_\cO^\cS(\cM_{\widetilde P})\ra\End_\cO^\cS(\cM^{\widetilde P})$ that maps every $\lambda\in\End_\cO^\cS(\cM_{\widetilde P})$ to its single extension to $\cM^{\widetilde P}\supset \cM_{\widetilde P}$.

Let $(\dN,\psi)\in\CM_{\cO,\cS}^{\cM_{\widetilde P}}(R)$. By \S\ref{ss-bim}, $\dN=\cM_{(\widetilde A',\widetilde i')}$ for some abelian surface $(\widetilde A',\widetilde i')$ with QM by $\cO$.
By \cite[Theorem 5.3]{Rib}, whether $[\widetilde A',\widetilde i']$ is a singular point of $\tilde X_0(D,N)$ or not depends on the local isomorphism class of $\cM_{(\widetilde A',\widetilde i')}$ at $p$.
Since $\cM_{(\widetilde A',\widetilde i')}$ and $\cM_{\widetilde P}$ are locally isomorphic, $[\widetilde A',\widetilde i']=\widetilde Q\in\tilde X_0(D,N)_{\rm{ns}}$ and we may write $\cM_{(\widetilde A',\widetilde i')}=\cM_{\widetilde Q}$.
The composition of $\psi$ with $\delta_{\widetilde Q}$ gives rise to an embedding $R\hookrightarrow \End_\cO^\cS(\cM^{\widetilde Q})$, which we claim is optimal.

Indeed, since $\mu_{\widetilde Q}$ has degree $p$, $[\cM^{\widetilde Q}:\cM_{\widetilde Q}]=p$. Hence the inclusion $\End_\cO^\cS(\cM_{\widetilde Q})\subset\End_\cO^\cS(\cM^{\widetilde Q})$ has also $p$-power index. Due to the fact that $\psi$ is optimal,
\[
\delta_{\widetilde Q}\circ\psi(R)=\delta_{\widetilde Q}(\End_\cO^\cS(\cM_{\widetilde Q})\cap \psi(K))\subseteq\End_\cO^\cS(\cM^{\widetilde Q})\cap \delta_{\widetilde Q}\circ\psi(K)=:\delta_{\widetilde Q}\circ\psi(R'),
\]
where $R'$ is an order in $K$ such that $R\subseteq R'$ has $p$-power index. Recall that $R$ is maximal at $p$, hence we conclude that $R'=R$ and $\delta_{\widetilde Q}\circ\psi$ is optimal.

%to prove this we only have to check that $\End_\cO^\cS(\widehat\mu_p N)\cap K=R$.

Since $H_{\widetilde Q}$ is the single $\cO$-stable subgroup scheme of $\widetilde A'$ of rank $p$, $\cM^{\widetilde Q}$ is the single $\cO$-stable extension of $\cM_{\widetilde Q}$ of index $p$. Moreover, it is characterized by the local isomorphism class of $\cM^{\widetilde Q}$ at $p$. More precisely, if $\cM_{\widetilde P}$ and $\cM_{\widetilde Q}$ are locally isomorphic, then $\cM^{\widetilde P}$ and $\cM^{\widetilde Q}$ must be locally isomorphic also.
Thus the correspondence $(\dN=\cM_{\widetilde Q},\psi)\rightarrow (\cM^{\widetilde Q},\delta_{\widetilde Q}\circ\psi)$ induces a map \begin{equation}\label{eqCMraro}
\delta:\CM_{\cO,\cS}^{\cM_{\widetilde P}}(R)\ra\CM_{\cO,\cS}^{\cM^{\widetilde P}}(R).
\end{equation}

Both sets are equipped with an action of $\Pic(R)$ and involutions $w_{q^n}$, for all $q^n\parallel DN$, $q\neq p$. On the other hand, since $\cM^{\widetilde P}$ is admissible of type $(2,0)$ or $(0,2)$, we identify $\CM_{\cO,\cS}^{\cM^{\widetilde P}}(R)$ with $\CM_{D/p,N}(R)$ by Theorem \ref{1-1bimodref} .
\begin{lemma}\label{lemmaPicAL}
The map $\delta:\CM_{\cO,\cS}^{\cM_{\widetilde P}}(R)\ra\CM_{D/p,N}(R)$ is equivariant under the actions of $\Pic(R)$ and $W(D/p,N)$.
\end{lemma}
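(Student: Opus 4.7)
The plan is to verify both equivariance statements by tracking how the defining short exact sequence of $(\cO,\cS)$-bimodules
\[
0 \to \cM_{\widetilde Q} \to \cM^{\widetilde Q} \to F \to 0,
\]
where $F$ is a finite $(\cO,\cS)$-bimodule of order $p$ supported at $p$ with the local type prescribed by $\cM^{\widetilde P}$, transforms under each action. Both identifications will then follow from the uniqueness characterization of $\cM^{\widetilde Q}$ as the unique $\cO$-stable extension of $\cM_{\widetilde Q}$ of index $p$ whose local isomorphism class at $p$ matches $\cM^{\widetilde P}$ (equivalently, from Theorem \ref{1-1bimodref} together with the uniqueness of the $\cO$-stable $\alpha_p$-subgroup scheme $H_{\widetilde Q}$).

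For $\Pic(R)$-equivariance, write $\Lambda=\End_\cO^\cS(\dN)$, $\Lambda'=\End_\cO^\cS(\cM^\dN)$, and $\psi'=\delta_\dN\circ\psi$. I would realize the Picard action concretely inside $\dN^0=(\cM^\dN)^0$ by identifying $[J]\ast\dN=\psi(J^{-1})\cdot\dN$ and $[J]\ast\cM^\dN=\psi'(J^{-1})\cdot\cM^\dN$. Since $\psi$ and $\psi'$ agree on $K$, the inclusion $\dN\hookrightarrow\cM^\dN$ induces an inclusion $[J]\ast\dN\hookrightarrow[J]\ast\cM^\dN$. The quotient is zero at primes $\ell\neq p$, and at $p$ is obtained by multiplying $F_p$ by $\psi(\pi_p^{-1})$ where $\pi_p$ locally generates $J_p$. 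Since $R$ is maximal at $p$, this is multiplication by an element of $K_p^\times$, which is a $(\cO,\cS)$-bimodule automorphism up to isomorphism, so the quotient is isomorphic to $F$. Uniqueness of the extension then gives $[J]\ast\cM^\dN\cong\cM^{[J]\ast\dN}$, and under this identification both embeddings $\delta_{[J]\ast\dN}\circ([J]\ast\psi)$ and $[J]\ast\psi'$ act on $\dN^0$ as $R$ through $\psi$, hence coincide.

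For $W(D/p,N)$-equivariance: given $q^n\parallel DN/p$ with $q\neq p$ and $\dQ_\cO$ the two-sided $\cO$-ideal of norm $q^n$, I apply the exact functor $\dQ_\cO\otimes_\cO-$ (exact since $\dQ_\cO$ is a locally free $\cO$-module of rank one) to the defining sequence. Because $\dQ_{\cO,p}=\cO_p$, the local type at $p$ of $\dQ_\cO\otimes_\cO F$ coincides with that of $F$; hence $\dQ_\cO\otimes_\cO\cM^\dN$ is a degree-$p$ $\cO$-stable extension of $\dQ_\cO\otimes_\cO\dN$ of the correct local type, and uniqueness gives $\dQ_\cO\otimes_\cO\cM^\dN\cong\cM^{\dQ_\cO\otimes_\cO\dN}$. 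The embedding compatibility is immediate since $w_{q^n}$ preserves the embedding as ring homomorphism and $\delta$ is a canonical inclusion of endomorphism rings.

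The principal technical hurdle is in the Picard case: reconciling the two tensor-product descriptions over the distinct algebras $\Lambda$ and $\Lambda'$ inside $\dN^0$, and verifying carefully that the cokernel retains exactly the local type of $F$ at $p$. This reduces to the local analysis at $p$ sketched above, which in turn depends crucially on the maximality of $R_p$. Once the bimodule identifications are in place, the embedding compatibility in both cases follows formally from the naturality of $\delta$.
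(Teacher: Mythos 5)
Your argument is correct and follows essentially the same route as the paper's proof: both hinge on the uniqueness of the $\cO$-stable index-$p$ extension of $\cM_{\widetilde Q}$, realize the $\Pic(R)$-action concretely via $\psi(J^{-1})$-translation (which is the same as $\psi(J^{-1})\Lambda\otimes_\Lambda-$ after identification inside $\dN^0$), and handle the Atkin--Lehner case by applying the exact functor $\dQ_\cO\otimes_\cO-$. The one caveat is that your extra step of checking that the cokernel keeps the local type of $F$ at $p$ is not needed: since $\cM^{\widetilde Q}$ is the \emph{unique} $\cO$-stable index-$p$ extension (via the uniqueness of $H_{\widetilde Q}$), it suffices to observe that $[J]\ast\cM_{\widetilde Q}\subset[J]\ast\cM^{\widetilde Q}$ (resp.\ $\dQ_\cO\otimes_\cO\cM_{\widetilde Q}\subset\dQ_\cO\otimes_\cO\cM^{\widetilde Q}$) is $\cO$-stable of index $p$, and uniqueness then automatically forces the correct local type; your verification is harmless but redundant.
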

\begin{proof}
Let $(\cM_{\widetilde Q},\psi)\in\CM_{\cO,\cS}^{\cM_{\widetilde P}}(R)$, let $[J]\in\Pic(R)$ and write $\Lambda=\End_\cO^\cS(\cM_{\widetilde Q})$.
We denote by $\widetilde Q^{[J]}\in\tilde X_0(D,N)$ the point attached to the bimodule $[J]\ast \cM_{\widetilde Q}=\psi(J^{-1})\Lambda\otimes_\Lambda \cM_{\widetilde Q}=\cM_{\widetilde Q^{[J]}}$.

Write $\Lambda'=\End_\cO^\cS(\cM^{\widetilde Q})\supset\Lambda$ and consider the bimodule $[J]\ast \cM^{\widetilde Q}=\psi(J^{-1})\Lambda'\otimes_{\Lambda'}  \cM^{\widetilde Q}\in\Pic_\cO^\cS(\cM^{\widetilde P})$. Then $[J]\ast \cM_{\widetilde Q}\subset[J]\ast \cM^{\widetilde Q}$ is $\cO$-stable of index $p$ and $[J]\ast \cM^{\widetilde Q}=\cM^{\widetilde Q^{[J]}}$ by uniqueness. Since $[J]\ast \psi$ and $[J]\ast(\delta_{\widetilde Q}\circ\psi)$ are given by the action of $R$ on $\cM_{\widetilde Q^{[J]}}$ and $\cM^{\widetilde Q^{[J]}}$ respectively,
\[
\delta([J]\ast(\cM_{\widetilde Q},\psi))=(\cM^{\widetilde Q^{[J]}},\delta_{\widetilde Q^{[J]}}\circ[J]\ast\psi)=([J]\ast \cM^{\widetilde Q},[J]\ast(\delta_{\widetilde Q}\circ\psi))=[J]\ast\delta(\cM_{\widetilde Q},\psi).
\]

As for the Atkin-Lehner involution $w_{q^n}$, let $\dQ_\cO$ be the single two-sided ideal of $\cO$ of norm $q^n$. Again $\dQ_\cO\otimes_\cO \cM^{\widetilde Q}$ is an $\cO$-stable extension of $\dQ_\cO\otimes_\cO \cM_{\widetilde Q}$ of index $p$. Hence $\dQ_\cO\otimes_\cO \cM^{\widetilde Q}=\cM^{w_{q^n}(\widetilde Q)}$, where $w_{q^n}(\widetilde Q)\in\tilde X_0(D,N)$ is the non-singular point attached to $\dQ_\cO\otimes_\cO \cM_{\widetilde Q}$.
This concludes that $\delta\circ w_{q^n}=w_{q^n}\circ\delta$.
\end{proof}

Let $P=[A,i]\in\CM(R)$ be a Heegner point such that $\tilde P\in\tilde X_0(D,N)_{\rm{ns}}$. Let $\cM_{\widetilde P}$ the bimodule attached to its specialization $\widetilde P=[\widetilde A,\widetilde i]\in\tilde X_0(D,N)$.
Recall the map $\phi:\CM_{D,N}(R)\ra\bigsqcup_\cM\CM_{\cO,\cS}^{\cM}(R)$ of \eqref{Phi}, induced by the natural injection $\End(A,i)\hookrightarrow\End(\widetilde A,\widetilde i)$ and the correspondences of \eqref{phip} and \eqref{end}.

The aim of the rest of this section is to modify $\phi$ in order to obtain a map
\[
\Phi:\CM_{D,N}(R)\ra\CM_{\frac{D}{p},N}(R)\sqcup\CM_{\frac{D}{p},N}(R)
\]
which composed with the natural projection $\pi:\CM_{\frac{D}{p},N}(R)\sqcup\CM_{\frac{D}{p},N}(R)\ra \Pic(\frac{D}{p},N)\sqcup\Pic(\frac{D}{p},N)$ and the correspondence of $\eqref{phip}$, assigns to $P\in\CM(R)$ the Eichler order that describes the irreducible component at which $P$ specializes.

%Given a Heegner point $P\in\CM(R)$ with non-singular reduction, the irreducible component where it lies is given by the $(\cO,\cS)$-bimodule of the pair $(\widetilde B,j)$, which is $\widehat{\mu_p}\cM$ by Proposition \ref{bimodGoodRed}. It is characterized by its type at $p$, that can be $(2,0)$ or $(0,2)$, and the oriented endomorphism algebra $\End_{\cO}^{\cS}(\widehat{\mu_p}\cM)=\End(\widetilde B,j)\in\Pic(\frac{D}{p},N)$.

\begin{theorem}\label{CMsmooththm}
The embedding $\End(A,i)\hookrightarrow\End(\widetilde A, \widetilde i)\hookrightarrow\End(\widetilde B, j)\in\Pic(\frac{D}{p},N)$ induces a map
\begin{equation}\label{Phi2}
\Phi: \CM_{D,N}(R)\ra\CM_{\frac{D}{p},N}(R)\sqcup\CM_{\frac{D}{p},N}(R),
\end{equation}
which is equivariant for the action of $W_{\frac{D}{p},N}(R)$ and satisfies the reciprocity law $\Phi([J]\ast\varphi)=[J]^{-1}\ast\Phi(\varphi)$ for all $[J]\in\Pic(R)$ and $\varphi\in\CM_{D,N}(R)$.

If $N$ is square free, then $\Phi$ is bijective, i.e. it establishes a bijection of $W_{\frac{D}{p},N}(R)$-sets between $\CM_{D,N}(R)/w_p$ and $\CM_{\frac{D}{p},N}(R)$.
\end{theorem}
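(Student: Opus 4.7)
The plan is to define $\Phi$ as a three-step composition and then assemble the equivariance and bijectivity from results already established in the excerpt. First, for $\varphi=\varphi_P\in\CM_{D,N}(R)$ with $\widetilde P=[\widetilde A,\widetilde i]$ non-singular, apply the map $\phi$ of \eqref{Phi}, which lands in $\CM_{\cO,\cS}^{\cM_{\widetilde P}}(R)$. Then apply the map $\delta:\CM_{\cO,\cS}^{\cM_{\widetilde P}}(R)\to\CM_{\cO,\cS}^{\cM^{\widetilde P}}(R)$ of \eqref{eqCMraro}. Finally, since $\cM^{\widetilde P}$ is admissible with $\Sigma=\{p\}$, $D_0^{\cM^{\widetilde P}}=D/p$ and $N_0^{\cM^{\widetilde P}}=1$ (as it is of type $(2,0)$ or $(0,2)$, not $(1,1)$), Theorem \ref{1-1bimodref} identifies $\CM_{\cO,\cS}^{\cM^{\widetilde P}}(R)$ with $\CM_{D/p,N}(R)$; taking the disjoint union over the two admissible local types at $p$ produces the target $\CM_{D/p,N}(R)\sqcup\CM_{D/p,N}(R)$ of \eqref{Phi2}.

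Equivariance is then a direct concatenation. For $\Pic(R)$: Theorem \ref{actPic} yields $\phi([J]\ast\varphi)=[J]^{-1}\ast\phi(\varphi)$ without any admissibility hypothesis; Lemma \ref{lemmaPicAL} propagates this through $\delta$; and the remark following \eqref{admCM} shows the final identification via Theorem \ref{1-1bimodref} is $\Pic(R)$-equivariant. This gives $\Phi([J]\ast\varphi)=[J]^{-1}\ast\Phi(\varphi)$. For $W_{D/p,N}(R)$: Theorem \ref{actA_L} yields $\phi(w_m\varphi)=w_m\phi(\varphi)$ for $m\parallel D(R)N(R)/p$, Lemma \ref{lemmaPicAL} propagates this through $\delta$, and Remark \ref{A_L_remark} covers the final identification. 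Together these prove the equivariance statement.

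For bijectivity when $N$ is square-free, first note that by Theorem \ref{redCMsing} the hypothesis forces $p$ to be inert in $K$. Inertness at $p\mid D$ gives $m_p=2$ via the orientation $o_p:\cO\to\F_{p^2}$ distinguishing the two $\cO_p^\times$-conjugacy classes of embeddings, so $p\in D(R)$ and $W_{D,N}(R)=W_{D/p,N}(R)\times\langle w_p\rangle$. From \eqref{h} we then get $\#\CM_{D,N}(R)=2\cdot\#\CM_{D/p,N}(R)$, matching the cardinality of $\CM_{D/p,N}(R)\sqcup\CM_{D/p,N}(R)$. Since $\Pic(R)\times W_{D/p,N}(R)$ acts freely and transitively on each copy of $\CM_{D/p,N}(R)$, the equivariance of $\Phi$ shows it is injective on each $\Pic(R)\times W_{D/p,N}(R)$-orbit in the source and hence, by cardinality, on all of $\CM_{D,N}(R)$ provided the images of the two orbits land in different copies. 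To see this, one checks that the action $\cM_{\widetilde P}\mapsto\dP_\cO\otimes_\cO\cM_{\widetilde P}$ (which via \eqref{Phi} and Theorem \ref{actA_L} computes $\phi(w_p\varphi)$) exchanges the unique $\cO$-stable index-$p$ extensions $\cM^{\widetilde P}$ of type $(2,0)$ and $(0,2)$; this is the same mechanism by which $w_p\in\End_\Q(X_0(D,N))$ swaps the two families of irreducible components of $\tilde X_0(D,N)$ recalled at the start of \S\ref{CMsmooth}.

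The main technical obstacle is precisely this last point: verifying that $w_p$ interchanges the two types at the level of the bimodules $\cM^{\widetilde Q}$. The argument runs as in the proof of Lemma \ref{lemmaPicAL}, using that $\dP_\cO\otimes_\cO\cM^{\widetilde P}$ is the unique $\cO$-stable extension of $\dP_\cO\otimes_\cO\cM_{\widetilde P}$ of index $p$ and hence coincides with $\cM^{w_p(\widetilde P)}$, combined with the fact that $\dP_\cO$ swaps the local types at $p$ by the explicit form $\dP_\cO=\cO\pi$ with $\pi$ a uniformizer exchanging the $\cO_p/\dP_\cO$-isotypic summands of $\cM^{\widetilde P}_p$.
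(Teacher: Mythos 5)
Your proof is correct and follows essentially the same route as the paper's: the map $\Phi$ is built as $\delta\circ\phi$ followed by the identification of Theorem~\ref{1-1bimodref}, equivariance is assembled from Theorem~\ref{actPic}, Theorem~\ref{actA_L}, Lemma~\ref{lemmaPicAL} and Remark~\ref{A_L_remark}, and bijectivity comes from the free transitive action of $\Pic(R)\times W_{D/p,N}(R)$ together with the fact that $w_p$ interchanges the two copies of $\CM_{D/p,N}(R)$. The only cosmetic difference is that the paper argues directly that the induced map $\CM_{D,N}(R)/w_p\to\CM_{D/p,N}(R)$ is a bijection of free transitive $\Pic(R)\times W_{D/p,N}(R)$-sets and then lifts via the $w_p$-exchange, whereas you run a cardinality-plus-orbit argument to the same effect; also note that Theorem~\ref{actA_L} is stated only for $q\neq p$, so in your final paragraph you should invoke the underlying computation (Lemmas~\ref{auxactPic} and~\ref{lemdifbimod} applied to $\dP_\cO$), as you essentially do, rather than Theorem~\ref{actA_L} itself.
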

Notice that we are considering $W(\frac{D}{p},N)$ as a subgroup of $W(D,N)$, so that $W(\frac{D}{p},N)$ acts naturally on $\CM_{D,N}(R)$. Since points in $\CM(R)$ have good reduction, $p$ is inert in $K$ by Theorem \ref{redCMsing}. Therefore, by \cite[Theorem 3.1]{Vig}, $m_p=2$ and the subgroups $W_{D,N}(R)/\langle w_p\rangle$ and $ W_{\frac{D}{p},N}(R)$ are isomorphic. Hence we can consider $\CM_{D,N}(R)/w_p$ as a $W_{\frac{D}{p},N}(R)$-set.
\begin{proof}
The map $\Phi: \CM_{D,N}(R)\ra\CM_{\frac{D}{p},N}(R)\sqcup\CM_{\frac{D}{p},N}(R)$ arises as the composition
\[
\CM_{D,N}(R)\stackrel{\phi}{\longrightarrow}\sqcup_\cM \CM_{\cO,\cS}^\cM(R)\stackrel{\delta}{\longrightarrow}\CM_{D/p,N}(R)\sqcup\CM_{D/p,N}(R).
\]
Then Lemma \ref{lemmaPicAL}, Theorem \ref{actPic} and Theorem \ref{actA_L} prove the first statement.

Assume that $N$ is square free. Then freeness and transitivity of the action of $W_{D/p,N}(R)\times \Pic(R)$ on both $\CM_{D,N}(R)/w_p$ and $\CM_{\frac{D}{p},N}(R)$ show that the corresponding map between them is bijective. Since the Atkin-Lehner involution $w_p$ exchanges the two components of $\CM_{D/p,N}(R)\sqcup\CM_{D/p,N}(R)$, we conclude that $\Phi:\CM_{D,N}(R)\ra\CM_{D/p,N}(R)\sqcup\CM_{D/p,N}(R)$ is also bijective.

\end{proof}

\section{Supersingular good reduction}\label{ssgood}

Exploiting the results of \S\ref{ssesp}, we can also describe the supersingular reduction of Heegner points at primes $p$ of good reduction of the Shimura curve $X_0(D,N)$. Indeed, let $p\nmid DN$ be a prime, let $\F$ be an algebraic closure of $\F_p$ and let $\tilde X_0(D,N)=\cX_0(D,N)\times\F_p$.
If $\widetilde P=[\widetilde A,\widetilde i]\in \tilde X_0(D,N)$ is a supersingular abelian surface with QM, then $a(\widetilde A)=2$ by \cite[\S3]{Rib}. Therefore $[\widetilde A,\widetilde i]$ is characterized by the isomorphism class of the $(\cO,\cS)$-bimodule $\cM_{\widetilde P}$ attached to it. By Theorem \ref{1-1bimodref}, these isomorphism classes are in correspondence with the set $\Pic(Dp,N)$.

%The correspondence sends the class of the $(\cO,\cS)$-bimodule $\cM$ to $\End_\cO^\cS(\cM)\in\Pic(Dp,N)$.

Let $P=[A,i]\in\CM(R)$ be a Heegner point and assume that the conductor $c$ of $R$ is coprime to $p$. Since $A$ is isomorphic to a product of two elliptic curves with CM by $R$, it follows from the classical work of Deuring that $A$ has supersingular specialization if and only if $p$ does not split in $K$.

If we are in this case, the map $\phi$ of \eqref{Phi} becomes
\begin{equation}\label{Phi3}
\Phi:\CM_{D,N}(R)\ra\CM_{Dp,N}(R)
\end{equation}
by means of the natural identification \eqref{admCM}.
Composing with the natural projection $$\pi:\CM_{Dp,N}(R)\ra\Pic(Dp,N)$$ and the correspondence of \eqref{phip},
one obtains a map $\CM(R) \mapsto \Pic(Dp,N)$ which assigns to $P=[A,i]\in\CM(R)$ the Eichler order $\End(\widetilde A,\widetilde i)$ that describes its supersingular specialization.

\begin{theorem}
The map $\Phi:  \CM_{D,N}(R)  \longrightarrow  \CM_{Dp,N}(R)$ is equivariant for the action of $W(D,N)$ and, up to sign, of $\Pic(R)$. More precisely:
\[\Phi([J]\ast\varphi)=[J]^{-1}\ast\Phi(\varphi),\quad\quad\Phi(w_m(\varphi))=w_m(\Phi(\varphi))\]
for all $m\parallel DN$, $[J]\in\Pic(R)$ and $\varphi:R\hookrightarrow\cO$ in $\CM_{D,N}(R)$.

Assume that $N$ is square free. If $p$ ramifies in $K$, the map $\Phi$ is bijective. If $p$ is inert in $K$, the induced map
\[
\CM_{D,N}(R)\stackrel{\Phi}{\ra}\CM_{Dp,N}(R)\ra\CM_{Dp,N}(R)/w_p
\]
is also bijective.
\end{theorem}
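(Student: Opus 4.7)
The plan is to proceed in parallel with Theorem \ref{CMsingthm}, exploiting that in the present setting $p \nmid D$, so the set $\Sigma$ of primes ramifying in both $B$ and $H$ (recall $H$ has discriminant $p$) is empty. Consequently $\cM_{\widetilde P}$ is automatically admissible, one has $D_0^{\cM_{\widetilde P}} = Dp$ and $N_0^{\cM_{\widetilde P}} = 1$, and the identification \eqref{admCM} gives $\CM_{\cO,\cS}^{\cM_{\widetilde P}}(R) \simeq \CM_{Dp,N}(R)$. This is what makes $\Phi$ a map to $\CM_{Dp,N}(R)$ in the first place.

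First I would establish the two equivariance statements. The $\Pic(R)$-equivariance $\Phi([J]\ast\varphi) = [J]^{-1}\ast\Phi(\varphi)$ is a direct application of Corollary \ref{CorPic}, whose admissibility hypothesis is automatic here. For the Atkin-Lehner involutions $w_m$ with $m\parallel DN$, every prime divisor $q$ of $m$ satisfies $q \neq p$ since $p$ is of good reduction, so Theorem \ref{actA_L} combined with Remark \ref{A_L_remark} transports the bimodule-level identity $\phi(w_m(\varphi)) = w_m(\phi(\varphi))$ to the corresponding equality on $\CM_{Dp,N}(R)$.

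For bijectivity when $N$ is square-free, the strategy is to compare cardinalities via \eqref{h} and then invoke the simple transitivity of the $\Pic(R)\times W_{*,*}(R)$-action together with equivariance of $\Phi$. The decisive input from \cite[Theorem 3.1]{Vig} is the local equivalence count $m_p$ at the prime $p$, which is newly added to the discriminant: $m_p = 1$ when $p$ ramifies in $K$, and $m_p = 2$ when $p$ is inert. Hence $\#\CM_{Dp,N}(R) = m_p\cdot \#\CM_{D,N}(R)$. In the ramified case, $p\notin D(R)$ so $W_{Dp,N}(R) = W_{D,N}(R)$; both source and target carry a simply transitive $\Pic(R)\times W_{D,N}(R)$-action on sets of the same size, and the equivariant $\Phi$ must therefore be a bijection. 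In the inert case, $p\in D(R)$ so $W_{Dp,N}(R) = W_{D,N}(R)\times\langle w_p\rangle$; the involution $w_p$ freely permutes the two local equivalence classes at $p$, so $\CM_{Dp,N}(R)/\langle w_p\rangle$ has cardinality $\#\CM_{D,N}(R)$ and inherits a simply transitive $\Pic(R)\times W_{D,N}(R)$-action, which forces the composition of $\Phi$ with the quotient to be bijective.

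The main obstacle is not conceptual but bookkeeping: one must carefully track how the Atkin-Lehner group $W_{Dp,N}(R)$ decomposes relative to $W_{D,N}(R)$ according to whether $m_p$ equals $1$ or $2$, and verify that the simply transitive action machinery of \S\ref{QOOE} applies cleanly on the target side even after $p$ is appended to the discriminant. Once this is in place, the argument reduces to a cardinality match plus equivariance, exactly as in Theorem \ref{CMsingthm}.
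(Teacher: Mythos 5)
Your proposal is correct and follows essentially the same path as the paper's proof: equivariance comes from the general results of \S\ref{ssesp} transported through the identification \eqref{admCM} (the paper cites Theorem \ref{actPic} and Theorem \ref{actA_L} directly; you cite the packaged Corollary \ref{CorPic} plus Remark \ref{A_L_remark}, which amounts to the same thing since $\Sigma=\emptyset$ makes all bimodules admissible with $D_0=Dp$, $N_0=1$), and bijectivity in the square-free case is deduced from simple transitivity of the $\Pic(R)\times W_{\ast,\ast}(R)$-action after distinguishing $W_{Dp,N}(R)=W_{D,N}(R)$ in the ramified case from $W_{Dp,N}(R)=W_{D,N}(R)\times\langle w_p\rangle$ in the inert case. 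Your extra bookkeeping on $m_p$ and cardinalities is exactly the content hidden behind the paper's terse appeal to simple transitivity.
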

\begin{proof}
The first statement follows directly from Theorem \ref{actPic} and Theorem \ref{actA_L}.
Assume that $N$ is square free. If $p$ ramifies in $K$, we have that $ W_{D,N}(R)= W_{Dp,N}(R)$. Then $\Pic(R)\times  W_{D,N}(R)$ acts simply and transitively on both $\CM_{D,N}(R)$ and $\CM_{D,N}(R)$ and $\Phi$ is bijective. If $p$ is inert in $K$, then $W_{D,N}(R)= W_{Dp,N}(R)/w_p$ and the last assertion holds.
\end{proof}

\section{Deligne-Rapoport's special fiber}\label{DR}

In this section we exploit the results of \S\ref{ssgood} to describe the specialization of Heegner points on Shimura curves $X_0(D,N)$ at primes $p\parallel N$. In order to do so, we first recall basic facts about Deligne-Rapoport's special fiber of $X_0(D,N)$ at $p$. %Our main reference will be Ribet's description \cite{Rib}, using classical results of \v{C}erednik \cite{Cer} and Drinfeld \cite{Drin}.

Let $p$ be a prime dividing exactly $N$, fix $\F$ an algebraic closure of $\F_p$ and let $\tilde X_0(D,N)=\cX_0(D,N)\times\Spec(\F_p)$.
By the work of Deligne and Rapoport, there are two irreducible components of $\tilde X_0(D,N)$ and they are isomorphic to $\tilde X_0(D,N/p)$. Notice that $\cX_0(D,N/p)$ has good reduction at $p$, hence $\tilde X_0(D,N/p)$ is smooth. %Moreover they meet transversally at the supersingular points of $\tilde X_0(D,N/p)$.

Let $X_0(D,N)\rightrightarrows X_0(D,N/p)$ be the two degeneracy maps described in Appendix A. They specialize to maps $\tilde X_0(D,N)\rightrightarrows \tilde X_0(D,N/p)$ on the special fibers; we denote them by $\delta$ and $\delta w_p$.
Write $\gamma$ and $w_p \gamma$ for the maps $\tilde X_0(D,N/p)\rightrightarrows \tilde X_0(D,N)$ given in terms of the moduli interpretation of these curves described in Appendix A by $\gamma((\widetilde{A},\widetilde{i}))=(\widetilde{A},\widetilde{i},\ker(\Frob))$ and $w_p\gamma((\widetilde{A},\widetilde{i}))=(\widetilde{A}^{(p)},\widetilde{i}^{(p)},\ker(\Vi))$, where $\Frob$ and $\Vi$ are the usual Frobenius and Verschiebung. It can be easily checked that $\gamma\circ\delta=w_p\gamma \circ \delta w_p=\id$ and $\gamma\circ\delta w_p=w_p \gamma \circ\delta=w_p$.

According to \cite[Theorem 1.16]{D-R}, $\gamma$ and $w_p\gamma$ are closed morphisms and their images are respectively the two irreducible components of $\tilde X_0(D,N)$. These irreducible components meet transversally at the supersingular points of $\tilde X_0(D,N/p)$. More precisely, the set of singular points of $\tilde X_0(D,N)$ is in one-to-one correspondence with supersingular points of $\tilde X_0(D,N/p)$.

Let $P=[A,i]\in\CM(R)$ be a Heegner point and let $\widetilde P=[\widetilde{A},\widetilde{i}]\in \tilde X_0(D,N)$ denote its specialization. Assume, in addition, that $p$ does not divide the conductor of $R$. According to the above description of the special fiber, the point $\widetilde P$ is singular if and only if $\delta(\widetilde{A},\widetilde{i})$ is supersingular, or, equivalently, if and only if $(\widetilde{A},\widetilde{i})$ is supersingular because $(\widetilde{A},\widetilde{i})$ and $\delta(\widetilde{A},\widetilde{i})$ are isogenous. By \cite[Theorem 3.2]{Vig}, the fact that $\CM(R)\neq\emptyset$ implies that $p$ is not inert in $K$. Since $A$ is the product of two elliptic curves with CM by $R$, we obtain the following result,
wich the reader should compare with Theorem \ref{redCMsing}.

\begin{proposition}
A Heegner point $P\in \CM(R)$ reduces to a singular point of $\tilde X_0(D,N)$ if and only if $p$ ramifies in $K$.
\end{proposition}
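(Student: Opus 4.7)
The plan is to assemble the proposition directly from the observations already collected in the paragraph immediately preceding the statement, together with the classical theorem of Deuring on supersingular reduction of CM elliptic curves.

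First I would record the equivalence, already noted in the preceding discussion, that $\widetilde P=[\widetilde A,\widetilde i]$ lies in the singular locus of $\tilde X_0(D,N)$ if and only if $(\widetilde A,\widetilde i)$ is supersingular; this uses the fact that the singular points of $\tilde X_0(D,N)$ correspond to the supersingular points of $\tilde X_0(D,N/p)$ under the degeneracy map $\delta$, and that $(\widetilde A,\widetilde i)$ and $\delta(\widetilde A,\widetilde i)$ are isogenous, hence simultaneously supersingular or ordinary.

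Next, since $P=[A,i]\in\CM(R)$, the discussion of \S\ref{ModCM} gives a decomposition $A\simeq E\times E_I$ where $E=\C/R$ and $E_I=\C/I$ are elliptic curves with CM by $R$ (here we use that $p$ is coprime to the conductor of $R$ so that $R_p$ is maximal). Consequently $\widetilde A\simeq \widetilde E\times\widetilde{E_I}$, and $\widetilde A$ is supersingular if and only if $\widetilde E$ is supersingular. By Deuring's classical criterion, $\widetilde E$ is supersingular if and only if $p$ does not split in $K$, i.e.\,$p$ is either inert or ramifies.

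To complete the equivalence I would invoke the nonemptiness of $\CM(R)$: the existence of an optimal embedding $R\hookrightarrow\cO$ at a prime $p\parallel N$ forces, by \cite[Theorem 3.2]{Vig}, $p$ to be either split or ramified in $K$ (it cannot be inert). Combining these two conditions, ``$p$ does not split'' and ``$p$ is not inert'', yields exactly ``$p$ ramifies in $K$''. Conversely, if $p$ ramifies then $p$ does not split, so by Deuring the reduction is supersingular, and therefore $\widetilde P$ is singular. No step requires substantial new work; the only subtlety worth being explicit about is the reduction of the singularity condition on $\widetilde P\in\tilde X_0(D,N)$ to the supersingularity of the underlying abelian surface, which is precisely the content of the paragraph preceding the proposition.
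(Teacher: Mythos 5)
Your proof is correct and follows exactly the paper's own argument: you reduce the singularity condition on $\widetilde P$ to the supersingularity of $(\widetilde A,\widetilde i)$ via the degeneracy map, decompose $A\simeq E\times E_I$ as a product of CM elliptic curves and apply Deuring's criterion, and then rule out the inert case by the nonemptiness of $\CM(R)$ together with \cite[Theorem 3.2]{Vig}. No differences worth noting.
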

\begin{proof}
The point $P$ specializes to a singular point if and only if $(\widetilde{A},\widetilde{i})$ is supersingular. Since $p$ is not inert in $K$ and any elliptic curve with CM by $R$ has supersingular specialization if and only if $p$ is not split in $K$, we conclude that $p$ ramifies in $K$.
\end{proof}

\subsection{Heegner points and the smooth locus}

Let $\cO'\supset\cO$ be an Eichler order in $B$ of level $N/p$. Notice that $\cO'$ defines the Shimura curve  $X_0(D,N/p)$. Let $\CM(R)$ be a set of Heegner points that specialize to non-singular points in $\tilde X_0(D,N)$ and let $P\in\CM(R)$. The inclusion $\cO'\supset\cO$ defines one of the two degeneracy maps $d:X_0(D,N)\rightarrow X_0(D,N/p)$ as in Appendix A. By the identification of \eqref{phip}, $P$ corresponds to $\varphi_P\in\CM_{D,N}(R)$ and its image $d(P)$ corresponds to $\varphi_{d(P)}\in\CM_{D,N/p}(R')$, where $\varphi_P(R')=\varphi_P(K)\cap\cO'$ and $\varphi_{d(P)}:R'\hookrightarrow\cO'$ is the restriction of $\varphi_P$ to $R'$. Since $[\cO':\cO]=p$, the inclusion $R\subseteq R'$ has also $p$-power index. According to the fact that the conductor of $R$ is prime to $p$, we deduce that $R=R'$.

Hence, restricting the natural degeneracy maps $X_0(D,N)\rightrightarrows X_0(D,N/p)$ to $\CM(R)$, we obtain a map
\begin{equation}\label{Phi4}
\CM_{D,N}(R)\ra\CM_{D,N/p}(R)\sqcup\CM_{D,N/p}(R).
\end{equation}

Observe that we have the analogous situation to \S\ref{CMsmooth} and Theorem \ref{CMsmooththm}. We have a map $\CM_{D,N}(R)\ra\CM_{D,N/p}(R)\sqcup\CM_{D,N/p}(R)$, which is clearly $\Pic(R)\times W(D,N/p)$ equivariant and a bijection if $N$ is square free, with the property that the natural map $$\CM_{D,N/p}(R)\sqcup\CM_{D,N/p}(R)\ra\Pic(D,N/p)\sqcup\Pic(D,N/p)$$ gives the irreducible component where the point lies. Notice that there are two irreducible components and $\#\Pic(D,N/p)=1$, since $D$ is the reduced discriminant of an indefinite quaternion algebra.

\subsection{Heegner points and the singular locus}

Let $\cO'\supset\cO$ be as above, let $(A,i)$ be an abelian surface with QM by $\cO'$ and let $C$ be a $\Gamma_0(p)$-structure.
Given the triple $(A,i,C)$, write $P=[A,i,C]$ for the isomorphism class of $(A,i,C)$, often
regarded as a point on $X_0(D, N)$ by Appendix A.

Let $P=[A,i,C]\in\CM(R)$ be a Heegner point with singular specialization in $\tilde X_0(D,N)$. Then $[A,i]\in X_0(D,N/p)$ is the image of $P$ through the natural map $d:X_0(D,N)\ra X_0(D,N/p)$ given by $\cO'\supset\cO$. Using the same argumentation as in the above setting, we can deduce that $\End(A,i)=\End(A,i,C)=R$.

Let $[\widetilde A,\widetilde i,\widetilde C]\in \tilde X_0(D,N)$ be its specialization. Since it is supersingular, $\widetilde C=\ker(\Fr)$. Thanks to the fact that the $\Fr$ lies in the center of $\End(\widetilde A)$, we obtain that $\End(\widetilde A,\widetilde i,\widetilde C)=\End(\widetilde A,\widetilde i)$. Thus the embedding $\End(A,i,C)\hookrightarrow\End(\widetilde A,\widetilde i,\widetilde C)$ is optimal and it is identified with $\End(A,i)\hookrightarrow\End(\widetilde A,\widetilde i)$, which has been considered in \S\ref{ssgood}. In conclusion we obtain a map $\Phi:  \CM_{D,N}(R)  \longrightarrow  \CM_{Dp,N/p}(R)$ as in \S\ref{CMsing} and, since $p$ ramifies in $K$, a result analogous to Theorem \ref{CMsingthm}.

\begin{theorem}\label{DRsing}
The map
\begin{equation}\label{Phi5}
\Phi:  \CM_{D,N}(R)  \longrightarrow  \CM_{Dp,\frac{N}{p}}(R)
\end{equation}
is equivariant for the action of $W(D,N)$ and, up to sign, of $\Pic(R)$. More precisely:
\[\Phi([J]\ast\varphi)=[J]^{-1}\ast\Phi(\varphi),\quad\quad\Phi(w_m(\varphi))=w_m(\Phi(\varphi))\]
for all $m\parallel DN$, $[J]\in\Pic(R)$ and $\varphi:R\hookrightarrow\cO$ in $\CM_{D,N}(R)$. Moreover, it is bijective if $N$ is square free.
\end{theorem}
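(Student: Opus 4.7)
The plan is to identify $\Phi$ as a two-step construction and then import the equivariance and bijectivity results already established in \S\ref{ssesp} and \S\ref{ssgood}. Let $\cO'\supset\cO$ be an Eichler order of level $N/p$. The inclusion induces a degeneracy map $d:X_0(D,N)\to X_0(D,N/p)$ whose restriction to Heegner sets is $d_{\CM}:\CM_{D,N}(R)\to\CM_{D,N/p}(R)$, sending $\varphi:R\hookrightarrow\cO$ to its composition with $\cO\hookrightarrow\cO'$. Because the conductor of $R$ is coprime to $p$, one has $\varphi(K)\cap\cO'=R$, so the resulting embedding is still optimal. Since $p\nmid D(N/p)$, $X_0(D,N/p)$ has good reduction at $p$, and by the proposition preceding the theorem, $d_{\CM}(\varphi_P)$ specializes supersingularly. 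The identifications $\End(A,i,C)=\End(A,i)$ and $\End(\widetilde A,\widetilde i,\widetilde C)=\End(\widetilde A,\widetilde i)$ (valid because $\widetilde C=\ker(\Fr)$ and $\Fr$ is central) then show that $\Phi$ factors as $\Phi_{ss}\circ d_{\CM}$, where $\Phi_{ss}:\CM_{D,N/p}(R)\to\CM_{Dp,N/p}(R)$ is the map \eqref{Phi3} of \S\ref{ssgood}.

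For the $\Pic(R)$-equivariance, I note that $d_{\CM}$ is tautologically equivariant: if $\varphi$ lands in $\cO$ then $[J]\ast\varphi$ lands in $[J]\ast\cO\subset [J]\ast\cO'$ and restriction commutes with this action. Combining this with Corollary \ref{CorPic} applied to $\Phi_{ss}$ yields the reciprocity law $\Phi([J]\ast\varphi)=[J]^{-1}\ast\Phi(\varphi)$. For $W(D,N)$-equivariance, the component $w_m$ with $m\parallel D\cdot(N/p)$ commutes with $d_{\CM}$ (both are described by conjugation by a local element at $m$, away from $p$) and with $\Phi_{ss}$ by Theorem \ref{actA_L} together with Remark \ref{A_L_remark}. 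The delicate case is $m=p$, where $w_p$ on the source is an Atkin--Lehner at a prime of $N$, while on the target it is an Atkin--Lehner at a ramified prime of the quaternion algebra. Here I would argue exactly as in the proof of Theorem \ref{CMsingthm}: since $p$ ramifies in $K$, $m_p=1$ for both orders by \cite[Theorem II.3.1]{Vig}, so $w_p$ preserves local equivalence and coincides, on both sides, with the action of a class $[\dP]\in\Pic(R)$; the two instances of $[\dP]$ match by construction because the bimodule dictionary of \S\ref{RibWork} identifies the two-sided ideal of norm $p$ in $\cO$ with the one in $\End_\cO^\cS(\cM_{\widetilde P})$.

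For bijectivity when $N$ is square free, I would use the transitive group action counts of \S\ref{QOOE}. The group $\Pic(R)\times W_{D,N}(R)$ acts freely and transitively on $\CM_{D,N}(R)$, and likewise $\Pic(R)\times W_{Dp,N/p}(R)$ on $\CM_{Dp,N/p}(R)$. Under the ramification hypothesis $p\mid\disc(K)$ one has $W_{D,N}(R)=W_{Dp,N/p}(R)$ (neither contains a nontrivial $w_p$ since $m_p=1$ on both sides), and the ramified primes dividing $D(R)$ and $N(R)$ remain unchanged in passing from $(D,N)$ to $(Dp,N/p)$. The equivariance shown above then makes $\Phi$ a map between sets equipped with free transitive actions of the same group, hence a bijection.

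The main obstacle is the reconciliation of the two incarnations of $w_p$ sketched in paragraph two: one geometric (a level-$p$ involution on $X_0(D,N)$) and one quaternionic (an Atkin--Lehner at a ramified prime of the discriminant on $X_0(Dp,N/p)$). Once the ramification hypothesis is exploited to convert both into $\Pic(R)$-actions, the remaining consistency check is carried by Theorem \ref{actPic}; all other verifications are formal consequences of the two-step factorization of $\Phi$.
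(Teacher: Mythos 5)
Your proposal is correct and follows essentially the same route as the paper: the paper also constructs $\Phi$ as the degeneracy $\CM_{D,N}(R)\to\CM_{D,N/p}(R)$ followed by the supersingular reduction map of \S\ref{ssgood}, identifying $\End(A,i,C)=\End(A,i)$ and $\End(\widetilde A,\widetilde i,\widetilde C)=\End(\widetilde A,\widetilde i)$, and then simply invokes "a result analogous to Theorem \ref{CMsingthm}," which is precisely the argument you spell out (Corollary \ref{CorPic} for $\Pic(R)$, Theorem \ref{actA_L} plus Remark \ref{A_L_remark} away from $p$, conversion of $w_p$ to a $\Pic(R)$-action using $m_p=1$, and the free transitive action count for bijectivity). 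The one place you over-explain is the matching of the two instances of $[\dP]$: the cleanest justification is not the bimodule dictionary but simply that $p$ ramifies, so $[\dP]^2=1$ and the sign in the reciprocity law $\Phi([J]\ast\varphi)=[J]^{-1}\ast\Phi(\varphi)$ is harmless — which is also the (implicit) reason the paper's analogous step in Theorem \ref{CMsingthm} goes through.
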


\section*{Appendix A: Moduli interpretations of Shimura curves}

In this appendix, we describe an interpretation of the moduli problem solved by the Shimura curve $X_0(D,N)$, which slightly differs from the one already considered in \S \ref{ModCM}. This moduli interpretation is also well-known to the experts, but we provide here some details because of the lack of suitable reference.

Let $\{\cO_N\}_{(N,D)=1}$ be a system of Eichler orders in $B$ such that each $\cO_N$ has level $N$ and $\cO_N\subseteq\cO_M$ for $M\mid N$. Let now $M\parallel N$, by what we mean a divisor $M$ of $N$ such that $(M, N/M)=1$. Since $\cO_{N}\subseteq\cO_{M}$, there is a natural map $\delta:X_0(D,N)\rightarrow X_0(D,M)$; composing with the Atkin-Lehner involution $w_{N/M}$, we obtain a second map $\delta\circ w_{N/M}:X_0(D,N)\rightarrow X_0(D,M)$ and the product of both yields an embedding $\jmath: X_0(D,N)\hookrightarrow X_0(D,M)\times X_0(D,M)$ (cf.\,\cite{M-G} for more details).

Note that the image by $\jmath$ of an abelian surface $(A,i)$ over a field $K$ with QM by $\cO_N$ is a pair $\big((A_0,i_0)/K, (A_0',i_0')/K\big)$ of abelian surfaces  with QM by $\cO_M$, related by an isogeny $\phi_{N/M}:(A_0,i_0)\ra(A_0',i_0')$ of degree $(N/M)^2$, compatible with the multiplication by $\cO_M$. Assume that either $\car(K)=0$ or $(N,\car(K))=1$, thus giving a pair $\big((A_0,i_0), (A_0',i_0')\big)$ is equivalent to giving the triple $(A_0,i_0,C_{N/M})$, where $C_{N/M}=\ker(\phi_{N/M})$ is a subgroup scheme of $A_0$ of rank $(N/M)^2$, stable by the action of $\cO_M$ and cyclic as an $\cO_M$-module. The group $C_{N/M}$ is what we call a $\Gamma_0(N/M)$-structure.

Let us explain now how to recover the pair $(A,i)$ from a triple $(A_0,i_0,C_{N/M})$ as above. Along the way, we shall also  relate the endomorphism algebra $\End(A_0,i_0,C_{N/M})$ of the triple to the endomorphism algebra $\End(A,i)$. The construction of the abelian surface $(A, i)$ will be such that the triple $(A_0,i_0,C_{N/M})$ is the image of $(A,i)$ by the map $\jmath$. This will establish an equivalence of the moduli functors under consideration, and will allow us to regard points on the Shimura curve $X_0(D,N)$ either as isomorphism classes of abelian surfaces $(A,i)$ with QM by $\cO_N$ or as isomorphism classes of triples $(A_0,i_0,C_{N/M})$ with QM by $\cO_M$ and a $\Gamma_0(N/M)$-structure, for any $M\parallel N$.

Since $C_{N/M}$ is cyclic as a $\cO_M$-module, $C_{N/M}=\cO_M P$ for some point $P\in A_0$. We define $$\Ann(P)=\{\beta\in\cO_M,\;s.t.\;\beta P=0\}.$$
It is clear that $\Ann(P)$ is a left ideal of $\cO_M$ of norm $N/M$, since $C_{N/M}\cong \cO_M/\Ann(P)$. Let $\alpha$ be its generator. Assume $C=C_{N/M}\cap \ker(i_0(\alpha))$ and let $A:=A_0/C$ be the quotient abelian surface, related with $A_0$ by the isogeny $\varphi:A_0\ra A$.

We identify $C=\{\beta P,\;s.t.\;\beta\in\cO_M,\;\alpha\beta \in\Ann(P)=\cO_M\alpha\}=\{\beta P,\;s.t.\;\beta \in\alpha^{-1}\cO_M\alpha\cap \cO_M\}$.  The order $\alpha^{-1}\cO_M\alpha\cap \cO_M$ is an Eichler order of discriminant $N$, hence $C=\{\beta P,\;s.t.\;\beta\in\cO_N\}$.

Since $\cO_N=\{\beta\in\cO_M,\;s.t.\;\beta C\subseteq C\}\subseteq \cO_M$, the embedding $i_0$ induces a monomorphism $i:\cO_N\hookrightarrow \End(A)$ such that $i(B)\cap\End(A)=i(\cO_N)$. Then we conclude that $(A,i)$ has QM by $\cO_N$. Recovering the moduli interpretation of $X_0(D,N)$, it can be checked that $\delta(A,i)=(A_0,i_0)$.

Let $\End(A_0,i_0,C_{N/M})$ be the subalgebra of endomorphisms $\psi\in\End(A_0,i_0)$, such that $\psi(C_{N/M})\subseteq C_{N/M}$. Identifying $\End(A_0',i_0')$ inside $\End^0(A_0,i_0)$ via $\phi_{N/M}$, we obtain that $\End(A_0,i_0,C_{N/M})=\End(A_0,i_0)\cap\End(A_0',i_0')$.
Let $\psi\in\End(A_0,i_0,C_{N/M})$. Since $\psi$ commutes with $i_0(\alpha)$, we have that $\psi(\ker(i_0(\alpha)))\subseteq\ker(i_0(\alpha))$. Moreover it fixes $C_{N/M}$ by definition, hence $\psi(C)\subseteq C$ and therefore each element of $\End(A_0,i_0,C_{N/M})$ induces an endomorphism of $\End(A,i)$. We have obtained a monomorphism $\End(A_0,i_0,C_{N/M})\hookrightarrow\End(A,i)$.

\begin{proposition}\label{ChangInt}
Let $(A,i)$ obtained from $(A_0,i_0,C_{M/N})$ by the above construction. Then $\End(A,i)\simeq\End(A_0,i_0,C_{N/M})$.
\end{proposition}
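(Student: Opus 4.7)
The injection $\End(A_0,i_0,C_{N/M}) \hookrightarrow \End(A,i)$ has already been exhibited, so it suffices to construct a one-sided inverse. Given $\psi \in \End(A,i)$, the $\cO_N$-equivariant isogeny $\varphi\colon A_0 \twoheadrightarrow A = A_0/C$ induces a canonical identification of rational endomorphism algebras $\End^0(A_0, i_0) = \End^0(A, i)$ (both coincide with the centralizer of $B$ inside the common rational endomorphism algebra, identified via $\varphi$). Let $\tilde\psi \in \End^0(A_0, i_0)$ denote the preimage of $\psi$. The plan is to verify that (i) $\tilde\psi$ is integral, i.e.\ $\tilde\psi \in \End(A_0, i_0)$, and (ii) $\tilde\psi(C_{N/M}) \subseteq C_{N/M}$; once these are established, $\tilde\psi \in \End(A_0, i_0, C_{N/M})$ by definition, and it is manifestly a preimage of $\psi$ under the original injection.

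Step (ii) is straightforward granting (i). Since $\tilde\psi$ descends to $\psi$ on the quotient $A = A_0/C$, it preserves $C = \ker\varphi$. Moreover the generator $P$ of $C_{N/M}$ lies in $C$: indeed $\alpha \in \Ann(P)$ gives $i_0(\alpha)(P) = 0$, so $P \in \ker i_0(\alpha)$, and trivially $P \in \cO_M \cdot P = C_{N/M}$. Hence $\tilde\psi(P) \in \tilde\psi(C) \subseteq C \subseteq C_{N/M}$, and since $\tilde\psi$ commutes with $i_0(\cO_M)$, we conclude
\[
\tilde\psi(C_{N/M}) = \tilde\psi(\cO_M \cdot P) = \cO_M \cdot \tilde\psi(P) \subseteq \cO_M \cdot C_{N/M} = C_{N/M}.
\]

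Step (i), the integrality of $\tilde\psi$ on $A_0$, is the main obstacle, since a priori the rational identification only ensures $[N/M]\cdot\tilde\psi \in \End(A_0, i_0)$ (via the dual isogeny $\hat\varphi$, using $\hat\varphi\circ\psi\circ\varphi \in \End(A_0,i_0)$). I would argue locally on Tate modules: at primes $\ell \nmid N/M$ the map $\varphi_*\colon T_\ell(A_0) \hookrightarrow T_\ell(A)$ is an equality inside $V_\ell$, so integrality is immediate. At primes $\ell \mid N/M$, the defining property $\cO_N = \{\beta \in \cO_M : \beta C \subseteq C\}$ should characterize $T_\ell(A_0)$ intrinsically as the largest $\cO_M \otimes \Z_\ell$-stable sublattice of $T_\ell(A)$; since $\tilde\psi$ lies in the centralizer of $i_0(\cO_M)$ in the rational endomorphism algebra, it preserves any such distinguished lattice, yielding $\tilde\psi \in \End(A_0, i_0)$. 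Alternatively, and perhaps most cleanly, one may observe that the whole construction $(A_0, i_0, C_{N/M}) \mapsto (A, i)$ extends functorially from isomorphisms to all morphisms of the respective moduli data, yielding an equivalence of categories whose induced map on endomorphism rings is the desired isomorphism; this sidesteps the local analysis at primes dividing $N/M$ entirely.
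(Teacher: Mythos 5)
Your reduction to two steps is sound, and Step~(ii) is fine once Step~(i) is granted, but Step~(i) is exactly where the argument has a genuine gap. The Tate-module route rests on the assertion that $\varphi_* T_\ell(A_0)$ is \emph{the} largest $\cO_M\otimes\Z_\ell$-stable sublattice of $T_\ell(A)$; this is not a formal consequence of the identity $\cO_N=\{\beta\in\cO_M:\beta C\subseteq C\}$ as you suggest. What one actually has to check is that $C[\ell^\infty]$, viewed inside $A_0$, contains no nonzero $\cO_M$-stable subgroup (equivalently, no $\cO_M$-stable lattice strictly between $\varphi_*T_\ell(A_0)$ and $T_\ell(A)$). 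This happens to be true, but it requires a local computation with the ideal $\cO_M\alpha$, and your ``should characterize'' flags that you have not carried it out. Your proposed ``cleanest alternative'' is worse than the Tate-module sketch, not better: the assignment $(A_0,i_0,C_{N/M})\mapsto(A,i)$ is indeed functorial, so one gets a \emph{faithful} functor for free, but the content of the proposition is precisely \emph{fullness} --- that every $\cO_N$-equivariant endomorphism of $A$ arises from an $\cO_M$-equivariant endomorphism of $A_0$ preserving $C_{N/M}$. Asserting an equivalence of categories at this point is circular.

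The paper avoids the integrality/divisibility-by-$N/M$ question entirely by a different device. Writing $\End(A_0,i_0,C_{N/M})=\End(A_0,i_0)\cap\End(A_0',i_0')$, it produces two explicit isogenies $\eta:A\to A_0$ and $\rho:A\to A_0'$ (coming from the commutative diagram built out of $\varphi$, $i_0(\alpha)$ and $\phi_{N/M}$), identifies $\ker\eta$ and $\ker\rho$ as the subgroup schemes $C_1,C_2\subset\ker(i(\alpha))$ cut out by eigenvalue conditions for the residual $\cO_N$-action, and then observes that any $\gamma\in\End(A,i)$ preserves $C_1$ and $C_2$ simply because it commutes with $i(\cO_N)$. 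Descent along $\eta$ and $\rho$ then embeds $\End(A,i)$ into each of $\End(A_0,i_0)$ and $\End(A_0',i_0')$. This is more elementary (no Tate modules, no saturation of endomorphism rings) and in particular works uniformly without separating out primes $\ell\mid N/M$. If you want to salvage your approach, the missing lemma to prove is: $C$ has no nonzero $\cO_M$-stable subgroup --- after which your Step~(i) goes through by the ``unique maximal $\cO_M$-stable lattice'' argument you outline.
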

\begin{proof}
We have proved that $\End(A_0,i_0,C_{N/M})\hookrightarrow\End(A,i)$, if we check that $\End(A,i)\hookrightarrow\End(A_0,i_0)$ and $\End(A,i)\hookrightarrow\End(A_0',i_0')$, we will obtain the equality, since $\End(A_0,i_0,C_{N/M})=\End(A_0,i_0)\cap\End(A_0',i_0')$.

Due to the fact that $C\subseteq C_{N/M}$, the isogeny $\phi_{N/M}$ factors through $\varphi$. By the same reason $i_0(\alpha)$ also factors through $\varphi$, and we have the following commutative diagram:
\[
\xymatrix{
A_0\ar[d]_{\phi_{N/M}}\ar[rd]_{\varphi}\ar[r]^{i_0(\alpha)} & A_0\ar[rd]^{\varphi} & \\
A_0' & A\ar[l]^{\rho}\ar[u]_{\eta}\ar[r]^{i(\alpha)} & A
}
\]
We can suppose that $\alpha$ is a generator of the two-sided ideal of $\cO_N$ of norm $N/M$. It can be done since $\cO_N=\cO_M\cap\alpha^{-1}\cO_M\alpha$, thus $\alpha$ can differ from the generator of such ideal by an unit of $\cO_M^{\times}$, namely an isomorphism in $\End_\C(A_0)$.

The orientation $\cO_N\subseteq\cO_M$ induces an homomorphism, $\mu:\cO_N\ra\cO_N/\cO_M\alpha\cong \Z/\frac{N}{M}\Z$.
We consider the subgroup scheme $$C_1=\{P\in\ker(i(\alpha))\;s.t.\;i(\beta) P=\mu(\beta)P,\;\mbox{for all }\beta\in\cO_N\}.$$
\emph{Claim:} $C_1=\ker(\eta)$. Clearly $\ker(\eta)\subset\ker(i(\alpha))$. Moreover $\ker(\eta)=\varphi(\ker(i_0(\alpha)))$ are those points in $\ker(i(\alpha))$ annihilated by $i(\cO_M\alpha)$, thus they correspond to the eigenvectors with eigenvalues $\mu(\beta)$, for all $\beta\in\cO_N$.

We also have the homomorphism, $\mu\circ w_{N/M}:\cO_N\ra\cO_N/\alpha\cO_M\cong \Z/\frac{N}{M}\Z$.
Again, we consider the subgroup scheme
$$C_2=\{P\in\ker(i(\alpha))\;s.t.\;i(\beta) P=\mu\circ w_{N/M}(\beta)P,\;\mbox{for all }\beta\in\cO_N\}.$$
\emph{Claim:} $C_2=\ker(\rho)$. In this case, we have that $\ker(\rho)=\varphi(C_{N/M})$, where $C_{N/M}=\cO_MP$. Due to the fact that $\alpha\cO_M\subseteq\cO_N$ and $\ker(\varphi)=\cO_NP$, we obtain that $\ker(\rho)\subseteq\ker(i(\alpha))$. By the same reason, $\ker(\rho)$ is the subgroup of $\ker(i(\alpha))$ annihilated by $i(\alpha\cO_M)$, therefore $\ker(\rho)=C_2$ as stated.

Finally let $\gamma\in\End(A,i)$. Since it commutes with $i(\beta)$ for all $\beta\in\cO_N$, it is clear that $\gamma(C_2)\subseteq C_2$. Then the isogeny $\rho$ induces an embedding $\End(A,i)\subseteq\End(A_0',i_0')$. Furthermore we have that $i_0(\alpha)i_0(\overline\alpha)=N/M$, then $\widehat{\varphi}=\eta\circ i_0(\overline{\alpha})=i(\alpha)\circ\eta$. Since $\gamma$ commutes with $i(\overline\alpha)$ and $\gamma(C_1)\subseteq C_1$, we obtain that $\widehat\varphi$ induces an embedding $\End(A,i)\subseteq\End(A_0,i_0)$. We conclude that $\End(A,i)=\End(A_0,i_0)\cap\End(A_0',i_0')=\End(A_0,i_0,C_{N/M})$.

\end{proof}

%Since $(A,i)$ has been constructed using $C_{N/M}$ itself and kernels of some elements of $i_0(\cO_M)$, we can always construct the pair $(A,i)$ from the triple $(A_0,i_0,C_{N/M})$, even in positive characteristic. Since the proof of the above proposition works in any characteristic prime to the level. Under this correspondence we will always have $\End(A,i)=\End(A_0,i_0,C_{N/M})$.

\bibliographystyle{plain}

\bibliography{santi}

\end{document}